\newtheorem{theorem}{Theorem}[section]
\newtheorem{definition}[theorem]{Definition}
\newtheorem{proposition}[theorem]{Proposition}
\newtheorem{lemma}[theorem]{Lemma}
\newtheorem{corollary}[theorem]{Corollary}
\newtheorem{remark}[theorem]{Remark}
\numberwithin{equation}{section}
\title[Long-time behaviour for a non-autonomous Klein-Gordon-Zakharov system]{Long-time behaviour for a non-autonomous Klein-Gordon-Zakharov system}
\author[E. M. Bonotto]{Everaldo M. Bonotto$^{\dag}$}\thanks{$\dag$Research partially
supported by FAPESP \# 2019/03188-7 and CNPq  \#  310540/2019-4}
\address[E. M. Bonotto]{Instituto de Ci\^encias Matem\'{a}ticas e de Computa\c{c}\~ao, Universidade de S\~{a}o Paulo, Campus de S\~{a}o Carlos, Caixa Postal 668, 13560-970, S\~{a}o Carlos SP, Brazil.}
\email{ebonotto@icmc.usp.br}
\author[M. J. D. Nascimento]{Marcelo J. D. Nascimento$^\star$}\thanks{$^\star$Research partially
supported by FAPESP \# 2017/06582-2, Brazil}
\address[M. J. D. Nascimento]{Universidade Federal de S\~{a}o
Carlos, Departamento de Matem\'atica, 13565-905, S\~{a}o
Carlos SP, Brazil.}
\email{marcelo@dm.ufscar.br}
\author[E. B. Santiago]{Eric B. Santiago$^{\star\star}$}\thanks{$^{\star\star}$This study was financed in part by the Coordenação de Aperfeiçoamento de Pessoal de Nível Superior – Brasil (CAPES) – Finance Code 001}
\address[E. B. Santiago]{Universidade Federal de S\~{a}o
Carlos, Departamento de Matem\'atica, 13565-905, S\~{a}o
Carlos SP, Brazil.}
\email{eric.busatto@gmail.com}
\date{\today}
\begin{document}

\begin{abstract} 
The aim of this paper is to study the long-time dynamics of solutions of the evolution system
\[
\begin{cases}
u_{tt} - \Delta u + u + \eta(-\Delta)^{\frac{1}{2}}u_t + a_{\epsilon}(t)(-\Delta)^{\frac{1}{2}}v_t = f(u), & \;  (x, t) \in \Omega \times (\tau, \infty), \\ 
v_{tt} - \Delta v + \eta(-\Delta)^{\frac{1}{2}}v_t - a_{\epsilon}(t)(-\Delta)^{\frac{1}{2}}u_t = 0, & \; (x, t) \in \Omega \times (\tau, \infty), 
\end{cases}
\]
subject to boundary conditions
\[
u = v = 0,   \;\;   (x, t) \in   \partial\Omega\times (\tau, \infty), 
\]
where $\Omega$ is a bounded smooth domain in $\mathbb{R}^n$, $n \geq 3$, with the boundary $\partial\Omega$ assumed to be regular enough, $\eta > 0$ is constant, $a_{\epsilon}$ is a Hölder continuous function and $f$ is a dissipative nonlinearity. This problem is a non-autonomous version of the well known Klein-Gordon-Zakharov system. Using the uniform sectorial operators theory, we will show the local and global well-posedness of this problem in $H_0^1(\Omega) \times L^2(\Omega) \times H_0^1(\Omega) \times L^2(\Omega)$. Additionally, we prove existence, regularity and upper semicontinuity of pullback attractors.

\vskip .1 in \noindent {\it Mathematics Subject Classification 2020}: Primary: 35B41, 35B40. Secondary: 35B65, 35K40. 
\newline {\it Key words and phrases:} Klein-Gordon-Zakharov system, Non-autonomous problem, pullback attractor, Global well-posedness, upper semicontinuity.

\end{abstract}

\maketitle

\section{Introduction}

In this paper, we study a non-autonomous version of the well known Klein-Gordon-Zakharov system. We consider the following initial-boundary value problem
\begin{equation}\label{edp01}
\begin{cases}
u_{tt} - \Delta u + u + \eta(-\Delta)^{\frac{1}{2}}u_t + a_{\epsilon}(t)(-\Delta)^{\frac{1}{2}}v_t = f(u), & (x, t) \in \Omega \times (\tau, \infty), \\
v_{tt} - \Delta v + \eta(-\Delta)^{\frac{1}{2}}v_t - a_{\epsilon}(t)(-\Delta)^{\frac{1}{2}}u_t = 0, & (x, t) \in \Omega \times (\tau, \infty), 
\end{cases}
\end{equation}
where $\eta$ is a positive constant, subject to boundary conditions
\begin{equation}\label{boundary condition}
u = v = 0, \;  (x, t) \in   \partial\Omega\times (\tau, \infty), 
\end{equation}
and initial conditions
\begin{equation}\label{cond01}
u(\tau, x) = u_0(x), \ u_t(\tau, x) = u_1(x), \ v(\tau, x) = v_0(x), v_t(\tau, x) = v_1(x), \ x \in \Omega, \ \tau \in \mathbb{R},
\end{equation}
where $\Omega$ is a bounded smooth domain in $\mathbb{R}^n$ with $n \geq 3$, and the boundary $\partial\Omega$ is assumed to be regular enough. 

In the case that  $a_{\epsilon}(t)\equiv a$,  the system \eqref{edp01}  represents the autonomous version of the Klein-Gordon-Zakharov system. Within the autonomous case, if $n=3$ then the Klein-Gordon-Zakharov system arises to describe the interaction of a Langmuir wave and an ion acoustic wave in a plasma, see \cite{Bellan, Dency, OT} and references therein.

These types of systems have been considered by many researchers in recent years.  In what follows,  we recall some related results  for these kinds of systems. In \cite{OT},  the authors considered the following system (in dimension $2$ and $3$)
\[
\begin{cases}
u_{tt}- \Delta u + u +vu = 0, \\
v_{tt} -c_0^2 \Delta v = \Delta(|u|^2),
\end{cases}
\]
 and they proved instability of solutions in the sense that small perturbations of the initial data can make the perturbed solution blow up in finite time.

In \cite{Almeida and Santos}, it is considered the following coupled system of wave equations:
\[
\begin{cases}
u_{tt} - \Delta u + \int_{0}^{+\infty} g(s) \Delta u(t - s) ds + \alpha v = 0, \\
v_{tt} - \Delta v + \alpha u = 0,
\end{cases}
\]
where the authors showed  the dissipativeness of this system, and, moreover, they  
proved that the associated semigroup is not exponentially stable. Later in \cite{Jin-Liang-Xiao}, the authors studied a more general and abstract version of the previous system presented in \cite{Almeida and Santos}. In fact, they obtained existence of solutions and an optimal energy decay estimate for the following coupled system of second order abstract evolution equations:
\[
\begin{cases}
u_{tt}(t) + A_{1}u(t) - \int_{0}^{+\infty} g(s) Au(t - s) ds + Bv(t) = 0, \\
v_{tt}(t) + A_{2}v(t) + Bu(t) = 0,
\end{cases}
\]
where $A, A_1$ and $A_2$ are positive self-adjoint linear operators in a Hilbert space $H$, $B$ is a positive self-adjoint bounded linear operator in $H$, and $g$ is a non-increasing function satisfying some properties. With this formulation, this system covers the well-known Timoshenko system, which appears in mechanics and thermoelasticity, and  models the transverse vibrations of a beam.

For a deeper and more detailed discussion about systems consisting of wave equations and other types of physical models, we refer to \cite{GZ}, \cite{Gan},\cite{Ma-Qin}, \cite{MN} and \cite{OTT}.

The main purpose of this paper is to show the global well-posedness and to study the long-time dynamics of solutions of the evolution system \eqref{edp01}.  In order to do that,  we shall use the uniform sectorial operators theory to show the local and global well-posedness of system  \eqref{edp01} and
we will use the abstract evolution processes theory to prove existence, regularity and upper semicontinuity of pullback attractors.

We emphasize that, in general, to obtain existence of attractors we need some type of ``dissipation" and ``compactness" for the dynamical system associated with the problem. In the literature, for non-autonomous problems, the ``compactness" is the so called pullback asymptotic compactness, and this is obtained by decomposing the nonlinear process into two parts, where one part decays to zero and the other one is compact. See \cite{BCNS1}, \cite{BCNS2}, \cite{Livro Alexandre}, \cite{Rivero1} and \cite{Rivero2}  for more details. However, in this paper, we establish the compactness of the  nonlinear process in a direct way, see Proposition \ref{proc-c}. 

This paper is organized as follows: The main results are presented in Section \ref{MResults}. We recall some concepts associated to the theory of pullback attractors in Section \ref{Preli}. In Section \ref{global}, we obtain the global well-posedness of solutions. Section \ref{PullA} is devoted to the existence of pullback attractors. Regularity of pullback attractors is obtained in Section \ref{RegA}. Finally in Section \ref{UpSem}, we study the upper semicontinuity of pullback attractors.

\section{Main Results}\label{MResults}

 In this section, we present the statement of the main results which will be proved in the next sections.  We start by presenting the general conditions to obtain the local and global well-posedness of the problem $(\ref{edp01})-(\ref{cond01})$ in some appropriate space  which will be specified later. 
Assume that the function $a_{\epsilon}\colon  \mathbb{R} \to (0, \infty)$ is continuously differentiable in $\mathbb{R}$ and satisfies the following condition:
\begin{equation}\label{function a is bounded}
	0 < a_0 \leq a_{\epsilon}(t) \leq a_1,
\end{equation}
for all $\epsilon \in [0,1]$ and $t\in \mathbb{R}$, with positive constants $a_0$ and $a_1$, and we also assume that the first derivative of $a_{\epsilon}$ is uniformly bounded in $t$ and $\epsilon$, that is, there exists a constant $b_0 > 0$ such that
\begin{equation}\label{derivative-a-bounded}
	|a_{\epsilon}^{\prime}(t)| \leq b_0 \quad \text{for all} \quad t\in \mathbb{R}, \ \epsilon\in [0, 1].
\end{equation}
Furthermore, we assume that $a_{\epsilon}$ is $(\beta, C)$-H\"{o}lder continuous, for each $\epsilon \in [0,1]$; that is,
\begin{equation}\label{hol-a}
	|a_{\epsilon}(t) - a_{\epsilon}(s)| \leq C|t-s|^{\beta}
\end{equation}
for all $t, s\in \mathbb{R}$ and $\epsilon \in [0,1]$. Concerning the nonlinearity $f$, we assume that $f\in C^1(\mathbb{R})$ and it satisfies the dissipativeness condition
\begin{equation}\label{dissipativeness}
	\limsup\limits_{|s| \to \infty}\frac{f(s)}{s} \leq 0,
\end{equation}
and also satisfies the subcritical growth condition given by
\begin{equation}\label{Gcondition}
	|f^{\prime}(s)| \leq c(1 + |s|^{\rho - 1}),
\end{equation}
for all $s\in \mathbb{R}$, where $1<\rho<\frac{n}{n-2}$, with $n \geq 3$, and $c>0$ is a constant. 

In order to formulate the non-autonomous problem $(\ref{edp01})-(\ref{cond01})$ in  a nonlinear evolution process setting, we introduce some notations. Let $X = L^2(\Omega)$ and denote by $A\colon  D(A)\subset X \to X$ the negative Laplacian operator, that is, $Au = (-\Delta)u$ for all $u \in D(A)$, where
 $D(A) = H^2(\Omega) \cap H_0^1(\Omega)$. Thus $A$ is a positive self-adjoint operator in $X$ with compact resolvent and, therefore, $-A$ generates a compact analytic semigroup on $X$. Following Henry \cite{Henry}, $A$ is a sectorial operator in $X$. Now, denote by $X^{\alpha}$, $\alpha > 0$, the fractional power spaces associated with the operator $A$; that is, $X^{\alpha} = D(A^{\alpha})$ endowed with the graph norm. With this notation, we have $X^{-\alpha} = (X^{\alpha})^{\prime}$ for all $\alpha > 0$, see \cite{Amann}.

In this framework, the non-autonomous problem $(\ref{edp01})-(\ref{cond01})$ can be rewritten as an ordinary differential equation in the following abstract form
\begin{equation}\label{edp abstrata} 
\begin{cases}
W_t + \mathcal{A}(t)W = F(W), & \ t > \tau,\\
W(\tau) = W_0, & \ \tau \in \mathbb{R},
\end{cases}
\end{equation}
where $W = W(t)$, for all $t \in \mathbb{R}$, and $W_0 = W(\tau)$ are respectively given by
\begin{equation*}
W = \begin{bmatrix} u\\ u_t\\ v\\ v_t\\ \end{bmatrix} \ \text{and} \ \ W_0 = \begin{bmatrix} u_0\\ u_1\\ v_0\\ v_1\\ \end{bmatrix},
\end{equation*}
and, for each $t \in \mathbb{R}$, the unbounded linear operator $\mathcal{A}(t)\colon  D(\mathcal{A}(t)) \subset Y \to Y$ is defined by
\begin{equation}\label{abstract operator}
\begin{split}
\mathcal{A}(t)\!\! \begin{bmatrix} u\\ v\\ w\\ z\\ \end{bmatrix} \!\!=\!\!
\begin{bmatrix}
0     & -I                           & 0 & 0 \\
A+I & \eta A^{\frac{1}{2}} & 0 & a_{\epsilon}(t)A^{\frac{1}{2}} \\
0     & 0                            & 0 & -I \\
0     & -a_{\epsilon}(t)A^{\frac{1}{2}} & A & \eta A^{\frac{1}{2}}
\end{bmatrix}
\!\!
\begin{bmatrix} u\\ v\\ w\\ z\\ \end{bmatrix} \!\!=
\!\! \begin{bmatrix} -v\\ (A+I)u + \eta A^{\frac{1}{2}}v + a_{\epsilon}(t)A^{\frac{1}{2}}z\\ -z\\ -a_{\epsilon}(t)A^{\frac{1}{2}}v + Aw + \eta A^{\frac{1}{2}}z\\ \end{bmatrix}
\end{split}
\end{equation}
for each $\begin{bmatrix} u & v & w & z\end{bmatrix}^T$ in the domain $D(\mathcal{A}(t))$ defined by the space
\begin{equation}\label{domain abstract operator}
D(\mathcal{A}(t)) = X^1 \times X^{\frac{1}{2}} \times X^1 \times X^{\frac{1}{2}},
\end{equation}
where
$$
Y = Y_0 = X^{\frac{1}{2}} \times X \times X^{\frac{1}{2}} \times X
$$
is the phase space of the problem $(\ref{edp01})-(\ref{cond01})$. The nonlinearity $F$ is given by
\begin{equation}\label{nonlinearity F}
F(W) = \begin{bmatrix} 0\\ f^e(u)\\ 0\\ 0\\ \end{bmatrix},
\end{equation}
where $f^e(u)$ is the Nemitski\u i operator associated with $f(u)$; that is,
$$
f^e(u)(x) = f(u(x)), \quad \text{for all} \quad  x \in \Omega.
$$

Now, we observe that the norms
\[
\|(x,y,z,w)\|_{1} = \|x\|_{X^{\frac{1}{2}}} + \|y\|_{X} + \|z\|_{X^{\frac{1}{2}}} + \|w\|_{X}
\]
and
\[
\|(x,y,z,w)\|_{2} = (\|x\|_{X^{\frac{1}{2}}}^2 + \|y\|_{X}^2 + \|z\|_{X^{\frac{1}{2}}}^2 + \|w\|_{X}^2)^{\frac{1}{2}}
\]
are equivalent in $Y_0$. In this way, we shall use the same notation $\|(x,y,z,w)\|_{Y_0}$ for both norms and the choice will be as convenient.

In the next lines, we describe the main results of this paper. \vspace{.2cm}

Let $\alpha \in (0, 1)$ and consider $1 < \rho < \frac{n + 2(1-\alpha)}{n-2}$. Recall that $Y_{\alpha - 1} = [Y_{-1}, Y_0]_{\alpha} $, where $Y_{-1}$ denotes the extrapolation space of $Y_0$ and $[\cdot, \cdot]_{\alpha}$ denotes the complex interpolation functor, see \cite{Triebel}. Under these conditions, we obtain the following result on well-posedness, which is proved in Section \ref{global}.

\vspace{.2cm}

\begin{theorem}\label{global-sol} \rm \textbf{[Well-Posedness]} \it \,  
Let $f \in C^1(\mathbb{R})$ be a function satisfying \eqref{dissipativeness}-\eqref{Gcondition},
assume conditions \eqref{function a is bounded}-\eqref{hol-a} hold and let $F\colon  Y_0 \to Y_{\alpha - 1}$ be defined in $(\ref{nonlinearity F})$. Then for any initial data $W_0 \in Y_0$ the problem \eqref{edp abstrata} has a unique global solution $W(t)$ such that
\[
W(t) \in C([\tau,\infty),Y_0).
\]
Moreover, such solutions are continuous with respect to the initial data on $Y_0$.
\end{theorem}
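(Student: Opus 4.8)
The plan is to recast \eqref{edp abstrata} as an abstract semilinear non-autonomous parabolic equation and to solve it through the variation of constants formula combined with a contraction argument, following the sectorial framework of Henry \cite{Henry} and the non-autonomous theory in \cite{Livro Alexandre}. Concretely, I would proceed in three stages: first establish that $-\mathcal{A}(t)$ generates a well-behaved linear evolution process on $Y_0$; then show that $F$ is locally Lipschitz from $Y_0$ into $Y_{\alpha-1}$; and finally upgrade the resulting local solution to a global one by means of an a priori energy bound.

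\emph{Linear part.} The two diagonal blocks of $\mathcal{A}(t)$ are strongly damped wave operators with structural damping $\eta A^{\frac{1}{2}}$; since the damping exponent $\tfrac12$ coincides with the order of the coupling terms $a_{\epsilon}(t)A^{\frac{1}{2}}$, the off-diagonal entries are relatively bounded perturbations of the diagonal part, and the bound \eqref{function a is bounded} makes this perturbation uniform in $t$ and $\epsilon$. Thus I would verify, via resolvent estimates, that $-\mathcal{A}(t)$ is sectorial on $Y_0$ with sectoriality constants independent of $t\in\mathbb{R}$ and $\epsilon\in[0,1]$. Because $\mathcal{A}(t)-\mathcal{A}(s)$ carries only the factor $a_{\epsilon}(t)-a_{\epsilon}(s)$ in front of $A^{\frac{1}{2}}$ in two entries, the Hölder hypothesis \eqref{hol-a} gives
\[
\|(\mathcal{A}(t)-\mathcal{A}(s))\mathcal{A}(s)^{-1}\|_{\mathcal{L}(Y_0)}\le C\,|t-s|^{\beta},
\]
which is exactly the uniform Hölder continuity needed to construct the associated linear evolution process $\{L(t,\tau):t\ge\tau\}$ and to obtain the parabolic smoothing estimate $\|L(t,\tau)\|_{\mathcal{L}(Y_{\alpha-1},Y_0)}\le C\,(t-\tau)^{-(1-\alpha)}$, whose singularity is integrable since $\alpha\in(0,1)$.

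\emph{Nonlinear part and local solution.} Because complex interpolation acts componentwise, $Y_{\alpha-1}=X^{\alpha/2}\times X^{(\alpha-1)/2}\times X^{\alpha/2}\times X^{(\alpha-1)/2}$, so it suffices to control $f^e\colon X^{\frac{1}{2}}\to X^{(\alpha-1)/2}$. Integrating \eqref{Gcondition} gives $|f(s)|\le c(1+|s|^{\rho})$, and combining the Sobolev embedding $X^{\frac{1}{2}}=H_0^1(\Omega)\hookrightarrow L^{2n/(n-2)}(\Omega)$ with the dual embedding $L^{2n/(n+2(1-\alpha))}(\Omega)\hookrightarrow X^{(\alpha-1)/2}$ shows that $f^e$ maps $X^{\frac{1}{2}}$ into $X^{(\alpha-1)/2}$ and is Lipschitz on bounded sets precisely when $\rho<\frac{n+2(1-\alpha)}{n-2}$, which is the reason for that restriction. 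With the smoothing estimate and this local Lipschitz property in hand, the operator
\[
(\mathcal{T}W)(t)=L(t,\tau)W_0+\int_{\tau}^{t} L(t,s)F(W(s))\,ds
\]
is a contraction on a small ball of $C([\tau,\tau+T_0],Y_0)$ for $T_0$ sufficiently small, yielding a unique local mild solution; uniqueness and continuous dependence on $W_0$ then follow from the same Lipschitz bound through a singular Gronwall inequality.

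\emph{Globalization.} To extend the local solution to $[\tau,\infty)$, I would derive an a priori bound for $\|W(t)\|_{Y_0}$ on finite intervals. Testing the first equation of \eqref{edp01} with $u_t$ and the second with $v_t$, the self-adjointness of $A^{\frac{1}{2}}$ forces the coupling contributions $a_{\epsilon}(t)\langle A^{\frac{1}{2}}v_t,u_t\rangle$ and $-a_{\epsilon}(t)\langle A^{\frac{1}{2}}u_t,v_t\rangle$ to cancel, while the damping terms $\eta A^{\frac{1}{2}}$ produce nonpositive contributions; the dissipativeness \eqref{dissipativeness} lets the potential $\int_{0}^{u}f$ be absorbed into a coercive modified energy. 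This yields a differential inequality that, via Gronwall, rules out blow-up of the $Y_0$-norm in finite time, so the continuation principle provides a global solution in $C([\tau,\infty),Y_0)$. I expect the main obstacle to lie in the linear stage, namely proving the uniform sectoriality and the parabolic smoothing of the evolution process for the coupled, time-dependent matrix operator; once these are secured, the nonlinear and global steps follow from the subcritical growth \eqref{Gcondition}, the derivative bound \eqref{derivative-a-bounded}, and the dissipative structure \eqref{dissipativeness} in a comparatively routine way.
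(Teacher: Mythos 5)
Your overall architecture (linear evolution process, locally Lipschitz nonlinearity into an extrapolated space, contraction for local existence, energy estimate for globalization) is the same as the paper's, and your nonlinear and globalization stages do match what is actually done: the identification $Y_{\alpha-1}=X^{\frac{\alpha}{2}}\times X^{\frac{\alpha-1}{2}}\times X^{\frac{\alpha}{2}}\times X^{\frac{\alpha-1}{2}}$, the Lipschitz property of $F$ under $\rho<\frac{n+2(1-\alpha)}{n-2}$ (Proposition \ref{F is Lipschitz}), the abstract local existence result (Theorem \ref{abstract existence result}), and the globalization via the decreasing energy $\mathcal{E}$, in which the skew coupling terms cancel and the dissipativeness \eqref{dissipativeness} yields coercivity through Lemma \ref{Lem_Aux_Int}. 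However, there is a genuine gap in your linear stage, which you yourself identify as the main obstacle: you propose to obtain uniform sectoriality of $-\mathcal{A}(t)$ by treating the off-diagonal coupling $a_{\epsilon}(t)A^{\frac{1}{2}}$ as a relatively bounded perturbation of the diagonal strongly damped wave blocks, with uniformity coming from \eqref{function a is bounded}. This inference does not close. Perturbation theory preserves generation of analytic semigroups only when the relative bound is small (smaller than a constant determined by the sectoriality constants of the unperturbed operator), whereas here the coupling has exactly the same order $A^{\frac{1}{2}}$ as the damping and its size $a_1$ is not assumed small relative to $\eta$; so the relative bound is of order $a_1/\eta$ and can be arbitrarily large. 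Moreover, the skew structure of the coupling does not rescue a perturbation argument: same-order skew couplings can destroy analyticity (the undamped wave operator, whose off-diagonal block structure is of this type, generates only a $C_0$-group, not an analytic semigroup). So the step ``off-diagonal entries are relatively bounded, hence $-\mathcal{A}(t)$ is sectorial'' would fail as stated.

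What the paper does instead — and what your ``verify via resolvent estimates'' would have to become — is a direct frequency-domain proof occupying most of Section \ref{global}: maximal accretivity (Proposition \ref{acretivo}), where the antisymmetry of the coupling makes its contribution to $\mathrm{Re}\langle\mathcal{A}(t)x,x\rangle_{Y_0}$ vanish identically; compactness of $\mathcal{A}^{-1}(t)$ and the inclusion $i\mathbb{R}\subset\rho(-\mathcal{A}(t))$ (Lemma \ref{An1}); and then the Liu--Zheng criterion (Theorem \ref{analiticidade}), for which one proves $\sup_{\beta}|\beta|\Vert(i\beta I+\mathcal{A}(t))^{-1}\Vert_{\mathcal{L}(Y_0)}<\infty$ by testing the resolvent equation against the specially chosen vectors $\begin{bmatrix} A^{-\frac{1}{2}}v & 0 & 0 & 0\end{bmatrix}^T$ and $\begin{bmatrix} 0 & 0 & 0 & A^{\frac{1}{2}}w\end{bmatrix}^T$ to control $\Vert A^{\frac{3}{4}}u\Vert_X$ and $\Vert A^{\frac{3}{4}}w\Vert_X$, and then taking imaginary parts. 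This argument exploits the cancellation structure and works for arbitrary $a_1$, with constants depending only on $\eta$ and $a_1$, hence uniformly in $t$ and $\epsilon$ — precisely the uniformity that a perturbation argument cannot deliver without an unassumed smallness condition. Your Hölder-continuity observation for $[\mathcal{A}(t)-\mathcal{A}(s)]\mathcal{A}(s)^{-1}$ is correct and matches the paper; it is only the sectoriality mechanism that needs to be replaced by the direct resolvent analysis.
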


The existence of a pullback attractor, see Theorem \ref{teo-pullback} below, is presented in Section \ref{PullA}.

\begin{theorem}\label{teo-pullback}  \rm \textbf{[Pullback Attractors]} \it \, 
Under the conditions of Theorem \ref{global-sol}, the problem $(\ref{edp01})-(\ref{cond01})$ has a pullback attractor $\{ \mathbb{A}(t): t\in\mathbb{R} \}$ in $Y_0$ and
$$
\bigcup\limits_{t \in \mathbb{R}}\mathbb{A}(t) \subset Y_0
$$
is bounded.
\end{theorem}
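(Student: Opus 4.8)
The plan is to recast the global solutions furnished by Theorem \ref{global-sol} as a nonlinear evolution process on $Y_0$ and then to verify the two hypotheses — existence of a (uniformly bounded) pullback absorbing family and pullback asymptotic compactness — of the standard existence criterion for pullback attractors (see \cite{Livro Alexandre}). Concretely, for $t \geq \tau$ I set $S(t,\tau)W_0 = W(t)$, where $W$ is the unique global solution of \eqref{edp abstrata} with $W(\tau)=W_0$. Theorem \ref{global-sol} guarantees that each $S(t,\tau)\colon Y_0 \to Y_0$ is well defined and continuous, and that uniqueness and continuous dependence yield the process identities $S(\tau,\tau)=\mathrm{Id}$ and $S(t,s)S(s,\tau)=S(t,\tau)$ together with joint continuity; thus $\{S(t,\tau): t\geq\tau\}$ is a nonlinear evolution process.

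For the dissipativity I would work with the energy functional
\[
\mathcal{E}(W) = \tfrac{1}{2}\left( \|u_t\|_X^2 + \|A^{\frac{1}{2}}u\|_X^2 + \|u\|_X^2 + \|v_t\|_X^2 + \|A^{\frac{1}{2}}v\|_X^2 \right) - \int_\Omega \int_0^{u} f(s)\,ds\,dx,
\]
whose derivative along solutions I would compute by pairing \eqref{edp abstrata} in the natural inner product of $Y_0$ (equivalently, testing the two scalar equations with $u_t$ and $v_t$). The decisive structural point is that the skew cross-coupling terms $a_{\epsilon}(t)(A^{\frac{1}{2}}v_t,u_t)$ and $-a_{\epsilon}(t)(A^{\frac{1}{2}}u_t,v_t)$ cancel by symmetry of the inner product, so the explicit time dependence of $a_{\epsilon}$ disappears from the energy identity and the only remaining driving term is the strong dissipation $-\eta(\|A^{\frac{1}{4}}u_t\|_X^2+\|A^{\frac{1}{4}}v_t\|_X^2)$. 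Since the bare energy identity dissipates only the damped velocities, I would then pass to a perturbed Lyapunov functional $V = \mathcal{E} + \varepsilon\Phi$, with $\Phi$ built from the usual multiplier pairings (of the type $(u_t,u)$ and $(v_t,v)$) and $\varepsilon>0$ small, and, using the dissipativeness \eqref{dissipativeness} and the subcritical growth \eqref{Gcondition} to control the nonlinear contribution, derive a differential inequality $\frac{d}{dt}V \leq -kV + C$. This produces a bounded set $B\subset Y_0$, \emph{independent of} $t$, that pullback-absorbs every bounded set; the uniformity of the constants in $t$ and $\epsilon$ is exactly what the bounds \eqref{function a is bounded}--\eqref{derivative-a-bounded} provide.

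For compactness I would exploit that the fractional damping renders the family $\{\mathcal{A}(t)\}$ uniformly sectorial (the uniform sectorial operators theory invoked in the setup), so the associated linear process is of analytic type and smoothing; because $A$ has compact resolvent, the fractional scale satisfies $Y_\beta \hookrightarrow Y_0$ compactly for every $\beta>0$. Writing $S(t,\tau)W_0$ through the variation-of-constants formula in the extrapolated scale and using that $F\colon Y_0 \to Y_{\alpha-1}$ sends bounded sets into bounded subsets of $Y_{\alpha-1}$ — which is precisely what the admissible range $1<\rho<\frac{n+2(1-\alpha)}{n-2}$ is designed to ensure — the Duhamel integral lands in some $Y_\beta$ with $\beta>0$ on bounded time intervals. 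Hence $S(t,\tau)$ carries bounded sets into relatively compact subsets of $Y_0$ for $t>\tau$, which is the direct compactness statement (cf. Proposition \ref{proc-c}); combined with the absorbing family this gives pullback asymptotic compactness.

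With both ingredients in hand, the abstract criterion yields the pullback attractor $\{\mathbb{A}(t): t\in\mathbb{R}\}$, and the boundedness of $\bigcup_{t\in\mathbb{R}}\mathbb{A}(t)$ in $Y_0$ follows at once, since every fiber $\mathbb{A}(t)$ lies in the closure of the fixed, $t$-independent absorbing set $B$. I expect the \emph{asymptotic compactness} to be the genuine obstacle: the delicate points are showing that the uniform sectoriality of $\{\mathcal{A}(t)\}$ — which hinges on the $A^{\frac{1}{2}}$-damping rather than on ordinary velocity damping — truly produces analytic smoothing into a compactly embedded space, and that the subcritical window for $\rho$ is sharp enough for $F$ to respect the extrapolated scale so that the variation-of-constants integral converges in $Y_\beta$. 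By contrast, the dissipative estimate, once the cross-coupling cancellation is noticed, reduces to the standard perturbed-multiplier bookkeeping.
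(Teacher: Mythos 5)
Your overall architecture (define the evolution process, prove dissipativity via a perturbed Lyapunov functional, prove compactness of $S(t,\tau)$, then apply Theorem \ref{existence of the pullback attractor}) coincides with the paper's, but the dissipative step as you sketch it has a genuine gap. With $V=\mathcal{E}+\varepsilon\Phi$ one indeed obtains, as the paper does, $\frac{d}{dt}V \leq -C_1\Vert W\Vert_{Y_0}^2 + C_2$ with $C_1,C_2$ uniform in $t$, $\epsilon$ and the initial data. But to pass to your claimed inequality $\frac{d}{dt}V\leq -kV+C$ with constants good enough to give an absorbing set independent of the absorbed set, you need the reverse comparison $V\leq M\Vert W\Vert_{Y_0}^2+C$ with $M$ \emph{independent of the trajectory}, and this fails: the potential term $-\int_\Omega\int_0^u f(s)\,ds\,dx$ inside $V$ grows superquadratically (like $\Vert u\Vert_{X^{\frac12}}^{\rho+1}$), and is only controlled as $C_r\Vert u\Vert_{X^{\frac12}}^2+C$ with $C_r$ depending on the radius $r$ of the ball containing the trajectory (Lemma \ref{Lem_Aux_Int}, item $(iii)$). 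Gronwall then yields an ultimate bound whose size depends on the initial bounded set $B$ (this is exactly the paper's Theorem \ref{the solution is exponentially dominatedP2}, where $\sigma$, $K_1$, $K_2$ all depend on $B$); that is not an absorbing set, and Definition \ref{def pullback strongly bounded dissipative} requires one fixed bounded set absorbing \emph{all} bounded sets. The paper closes this gap in Theorem \ref{the solution is exponentially dominated} by a threshold argument that never inverts the comparison: from $\frac{d}{dt}L_{\gamma_1,\gamma_2}\leq -C_1\Vert W\Vert_{Y_0}^2+C_2$ and the uniform \emph{lower} bound $L_{\gamma_1,\gamma_2}\geq\frac18\Vert W\Vert_{Y_0}^2-K$, a trajectory either stays above the fixed threshold $\Vert W\Vert_{Y_0}^2>\frac{C_2+1}{C_1}$, forcing $L_{\gamma_1,\gamma_2}$ to decrease at unit rate until $\Vert W\Vert_{Y_0}^2\leq 8K$, or it dips below the threshold and thereafter remains in a set of size $\ell_{(C_2+1)/C_1}$; this produces a radius $R$ independent of $B$. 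Note that this is the opposite of your closing assessment: in this problem the delicate point is precisely the uniformity of the absorbing set, not the compactness.

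Your compactness argument, on the other hand, is a genuinely different and viable route. The paper (Proposition \ref{proc-c}) proves compactness of $S(t,\tau)$, $t>\tau$, directly, by testing \eqref{edp01} with $A^{\frac12}u$, $A^{\frac12}v$, $A^{\frac12}u_t$, $A^{\frac12}v_t$ and showing that $S(t,\tau)B$ is bounded in $X^{\frac34}\times X^{\frac14}\times X^{\frac34}\times X^{\frac14}$, which embeds compactly in $Y_0$; you instead use parabolic smoothing in the variation-of-constants formula \eqref{process_formulation}--\eqref{parte compacta do processo}, together with boundedness of $F$ on bounded sets into $Y_{\alpha-1}$ and the compact embedding $Y_\beta\hookrightarrow Y_0$. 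For this to be rigorous you must use smoothing estimates for $L(t,s)$ on the extrapolated scale, i.e. in $\mathcal{L}(Y_{\alpha-1},Y_\beta)$ (the paper's Remark states them only for nonnegative exponents; the extension is in \cite{Nascimento}, \cite{Sobo}), and take $0<\beta<\min\{\alpha,\epsilon_0\}$ so that the singular integral converges. Your version is shorter and more abstract; the paper's direct energy estimates yield explicit higher-norm bounds that are then reused in the regularity analysis of Section \ref{RegA}.
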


Theorem \ref{RegPA} deals with the regularity of the pullback attractor  obtained in Theorem \ref{teo-pullback}. This result is proved in Section \ref{RegA}.

\begin{theorem}\label{RegPA}  \rm \textbf{[Regularity of Pullback Attractors]} \it \, 
Assume that $\frac{n-1}{n-2} \leq \rho < \frac{n}{n-2}$. The pullback attractor $\{ \mathbb{A}(t): t\in\mathbb{R} \}$ for the problem $(\ref{edp01})-(\ref{cond01})$, obtained in Theorem \ref{teo-pullback}, lies in a more regular space than $Y_0$. More precisely,
\[
\bigcup\limits_{t \in \mathbb{R}}\mathbb{A}(t)
\]
is a bounded subset of $X^1 \times X^{\frac{1}{2}} \times X^1 \times X^{\frac{1}{2}}$.
\end{theorem}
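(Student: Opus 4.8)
The plan is to exploit the standard characterisation of the pullback attractor as the union of bounded global solutions, together with the smoothing action of the linear evolution process and a bootstrap argument in the fractional power scale $\{Y_\sigma\}$. Let $\{L(t,\tau):t\ge\tau\}$ denote the linear evolution process generated by the family $\{\mathcal{A}(t)\}_{t\in\mathbb{R}}$ and $\{S(t,\tau):t\ge\tau\}$ the nonlinear process associated with \eqref{edp abstrata}. Fix $t_0\in\mathbb{R}$ and $W_0\in\mathbb{A}(t_0)$. By the invariance of the pullback attractor there exists a global solution $\xi\colon\mathbb{R}\to Y_0$ with $\xi(t_0)=W_0$ and $\xi(s)\in\mathbb{A}(s)$ for every $s\in\mathbb{R}$; by Theorem~\ref{teo-pullback} we have the uniform bound $\sup_{s\in\mathbb{R}}\|\xi(s)\|_{Y_0}\le R$, where $R$ is the radius of the bounded set $\bigcup_{t}\mathbb{A}(t)$. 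All the estimates below will be uniform in $W_0$ and $t_0$, which is what is needed to conclude boundedness of $\bigcup_t\mathbb{A}(t)$ in the more regular space.

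The next step is to represent $\xi$ through a backward-in-time variation-of-constants formula. Writing, for $\tau<t$,
\[
\xi(t)=L(t,\tau)\xi(\tau)+\int_{\tau}^{t}L(t,s)F(\xi(s))\,ds,
\]
I would let $\tau\to-\infty$. Since the damping $\eta A^{1/2}$ together with the coupling structure makes the family $\{\mathcal{A}(t)\}$ uniformly sectorial and dissipative, the linear process obeys a uniform exponential estimate $\|L(t,\tau)\|_{\mathcal{L}(Y_0)}\le Me^{-\omega(t-\tau)}$ with $\omega>0$ (the same decay that underlies the compactness used in Theorem~\ref{teo-pullback}). Combined with $\|\xi(\tau)\|_{Y_0}\le R$ this kills the term $L(t,\tau)\xi(\tau)$ as $\tau\to-\infty$, yielding
\[
\xi(t)=\int_{-\infty}^{t}L(t,s)F(\xi(s))\,ds .
\]
Uniform sectoriality further supplies the parabolic smoothing estimate $\|L(t,s)\|_{\mathcal{L}(Y_\gamma,Y_\theta)}\le C\,(t-s)^{-(\theta-\gamma)}e^{-\omega(t-s)}$ valid for $0\le\theta-\gamma<1$; this is the tool that converts integrability of $F(\xi)$ in $Y_\gamma$ into extra regularity of $\xi$.

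With these two ingredients in place, the heart of the argument is a finite bootstrap. Under $\rho<\frac{n}{n-2}$ one checks, using $X^{1/2}=H_0^1(\Omega)\hookrightarrow L^{2n/(n-2)}(\Omega)$ and the growth bound \eqref{Gcondition} (so that $|f^e(u)|\le c(|u|+|u|^{\rho})$), that $F$ maps $Y_0$ boundedly into $Y_0$; hence $s\mapsto F(\xi(s))$ is bounded in $Y_0$ and the smoothing estimate with $\gamma=0$ makes the integral converge in $Y_\theta$ for every $\theta<1$, so $\bigcup_t\mathbb{A}(t)$ is bounded in $Y_\theta$ for all $\theta<1$. At this improved level the first component $u(s)$ is bounded in $X^{(1+\theta)/2}$, so $f^e(u)$ gains fractional regularity: differentiating the Nemytski\u{\i} operator, $\nabla f^e(u)=f'(u)\nabla u$, and estimating by H\"older together with \eqref{Gcondition} and the Sobolev embeddings available for $u\in X^{(1+\theta)/2}$, one shows that $F(\xi(s))$ is bounded in $Y_\beta$ for some $\beta>0$. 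Feeding this back through the smoothing estimate with $\gamma=\beta$ gives $\xi(t)\in Y_\theta$ for $\theta<1+\beta$, in particular a uniform bound in $Y_1=X^1\times X^{\frac12}\times X^1\times X^{\frac12}$, which is the assertion.

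The delicate point, and the one I expect to be the main obstacle, is precisely this last regularity estimate for the Nemytski\u{\i} operator $f^e$ between fractional power spaces: quantifying, via fractional Sobolev/product inequalities and the growth condition \eqref{Gcondition}, exactly how much regularity $f^e(u)$ inherits from $u$. This is where the dimension $n$ and the exponent $\rho$ interact, and the hypothesis $\frac{n-1}{n-2}\le\rho<\frac{n}{n-2}$ should be read as the sharp bookkeeping condition that simultaneously guarantees the gain $\beta>0$ at the improving step and keeps all the intermediate embeddings subcritical, so that the iteration closes up at $Y_1$ in finitely many steps. Care must also be taken that the constants in each estimate depend only on $R$ (and not on the particular orbit), which is automatic from the uniformity already built into the sectoriality and the bound of Theorem~\ref{teo-pullback}.
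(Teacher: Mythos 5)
Your proposal takes a genuinely different route from the paper. You bootstrap abstractly: represent the bounded global solution as $\xi(t)=\int_{-\infty}^{t}L(t,s)F(\xi(s))\,ds$ (this first reduction is the same as the paper's, using Theorem \ref{globalsolution} and Theorem \ref{the solution decays}), and then trade boundedness of $F(\xi)$ in $Y_\gamma$ for regularity of $\xi$ via parabolic smoothing of $L(t,s)$ in the positive fractional scale, upgrading $F(\xi)$ through Nemytski\u{\i} regularity. The paper never uses smoothing above $Y_0$: after the same reduction it differentiates the system in time, performs energy estimates for $(\phi,\varphi)=(u_t,v_t)$ (system \eqref{edp07}) in the \emph{negative} scale $X^{\frac{1-\alpha}{2}}\times X^{-\frac{\alpha}{2}}\times X^{\frac{1-\alpha}{2}}\times X^{-\frac{\alpha}{2}}$ starting at $\alpha_1=\frac{(\rho-1)(n-2)}{2}$, recovers spatial regularity of $(u,v)$ from the elliptic part of the equations (estimating $\Vert Au\Vert_{X^{-\alpha_1/2}}$ by $\Vert\phi_t\Vert_{X^{-\alpha_1/2}}$ plus lower-order terms), and iterates $\alpha_{k+1}=\alpha_1+\rho(\alpha_k-1)$ finitely many times. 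Your route, if completed, would arguably be shorter and would apparently not need the lower bound on $\rho$; the paper's route only needs elementary H\"older/Sobolev estimates into dual spaces $X^{-\alpha/2}$ and avoids the fractional power scale of $\mathcal{A}(t)$ above $Y_0$ altogether.

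However, as written your proposal has genuine gaps. First and foremost, the step you yourself identify as "the main obstacle" --- the quantitative estimate showing $F(\xi(s))$ is bounded in $Y_\beta$ for some $\beta>0$ --- is deferred, not proven, and it is the entire content of the argument: smoothing from $Y_0$ alone only yields $Y_\theta$, $\theta<1$, never $Y_1$. (The estimate is in fact provable: with $u$ bounded in $X^{\sigma}$ for all $\sigma<1$, one checks $f'(u)\nabla u\in L^2(\Omega)$ for every $\rho<\frac{n}{n-2}$, so $f^e(u)\in H^1(\Omega)\subset X^{\beta/2}$ provided $\beta/2<\frac14$; the restriction $\beta<\frac12$ is forced because $f(0)$ need not vanish, so $f^e(u)$ does not satisfy the Dirichlet condition. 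None of this bookkeeping appears in your text.) Second, you silently use a product characterization of the intermediate spaces $Y_\theta$, $0<\theta<1$, of the non-self-adjoint matrix operator $\mathcal{A}(t)$: once to read off "$u$ bounded in $X^{\frac{1+\theta}{2}}$" from "$\xi$ bounded in $Y_\theta$", and once to place $(0,f^e(u),0,0)$ in $Y_\beta$. The paper establishes such a formula only on the negative side, $Y_{\alpha-1}=[Y_{-1},Y_0]_\alpha$, citing \cite{CC}; on the positive side you must define $Y_\theta=[Y_0,Y_1]_\theta$, prove the product formula, and verify that the smoothing-with-decay estimate for $L(t,s)$ holds in that scale. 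Related to this, the non-autonomous smoothing theory quoted in the preliminaries requires uniform sectoriality \emph{and} uniform H\"older continuity in the base space; the paper verifies H\"older continuity only in $Y_{-1}$ and analyticity (for each fixed $t$) in $Y_0$. Relative to the $Y_{-1}$-based scale, $Y_1$ sits two units up, beyond the ceiling $\alpha<1+\epsilon_0$, so your estimates must be re-based at $Y_0$ --- true but unaddressed.

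Finally, your reading of the hypothesis $\frac{n-1}{n-2}\le\rho$ as "the sharp bookkeeping condition" for your bootstrap is speculation; your argument, once completed, does not appear to use it. In the paper it plays a concrete and different role: it guarantees $\alpha_1=\frac{(\rho-1)(n-2)}{2}\ge\frac12$, which is exactly what legitimizes the embeddings $X\hookrightarrow X^{\frac{1-2\alpha_1}{4}}$ used to control the $a_\epsilon'$-terms in the energy estimates for \eqref{edp07}.
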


Lastly, in Section \ref{UpSem}, we show a result on the upper semicontinuity of the pullback attractor, which is stated in 
 Theorem \ref{T-UpSemi}.

\begin{theorem}\label{T-UpSemi}   \rm \textbf{[Upper Semicontinuity]} \it \, 
For each $\eta > 0$ and $\epsilon\in [0, 1],$ let $W^{(\epsilon)} (\cdot) = S_{(\epsilon)} (\cdot, \tau)W_0$ be the solution of $(\ref{edp01})$ in $Y_0$. Assume that $\Vert a_{\epsilon} - a_0 \Vert_{L^{\infty}(\mathbb{R})} \rightarrow 0$ as $\epsilon \rightarrow 0^{+}$. Then, for each $T > 0$, $W^{(\epsilon)}$ converges to $W^{(0)}$ in $C([0, T], Y_0)$ as $\epsilon \rightarrow 0^{+}$. Moreover, the family of pullback attractors $\{ \mathbb{A}_{(\epsilon)}(t): t \in \mathbb{R} \}$ is upper semicontinuous at $\epsilon = 0$.
\end{theorem}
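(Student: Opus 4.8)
The plan is to prove the two assertions in turn, first the convergence of the solutions and then, from it, the upper semicontinuity of the attractors via an abstract criterion. Throughout, the decisive point is that every estimate can be made uniform in $\epsilon\in[0,1]$, which is exactly what the uniform bounds \eqref{function a is bounded}--\eqref{derivative-a-bounded} and the uniform sectoriality of the family $\{\mathcal{A}_{(\epsilon)}(t)\}$ provide.

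For the convergence of $W^{(\epsilon)}$ to $W^{(0)}$ in $C([0,T],Y_0)$, I would represent both solutions through the variation of constants formula for the linear evolution processes $U_{(\epsilon)}(t,s)$ generated by $-\mathcal{A}_{(\epsilon)}(t)$,
\[
W^{(\epsilon)}(t) = U_{(\epsilon)}(t,\tau)W_0 + \int_\tau^t U_{(\epsilon)}(t,s)\,F(W^{(\epsilon)}(s))\,ds,
\]
and subtract the analogous identity for $\epsilon=0$. The difference then splits into a linear contribution $[U_{(\epsilon)}(t,\tau)-U_{(0)}(t,\tau)]W_0$, a nonlinear contribution $\int_\tau^t U_{(\epsilon)}(t,s)[F(W^{(\epsilon)})-F(W^{(0)})]\,ds$, and a perturbation contribution $\int_\tau^t[U_{(\epsilon)}(t,s)-U_{(0)}(t,s)]F(W^{(0)})\,ds$. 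The key structural observation is that $\mathcal{A}_{(\epsilon)}(t)-\mathcal{A}_{(0)}(t)$ acts only through the coupling entries $\pm(a_\epsilon(t)-a_0(t))A^{\frac12}$, so that as an operator into the extrapolated space $Y_{\alpha-1}$ its norm is controlled by $C\|a_\epsilon-a_0\|_{L^\infty(\mathbb{R})}$. I would combine the difference identity
\[
U_{(\epsilon)}(t,s)-U_{(0)}(t,s)=\int_s^t U_{(\epsilon)}(t,r)\,[\mathcal{A}_{(0)}(r)-\mathcal{A}_{(\epsilon)}(r)]\,U_{(0)}(r,s)\,dr
\]
with the uniform smoothing estimate $\|U_{(\epsilon)}(t,s)\|_{\mathcal{L}(Y_{\alpha-1},Y_0)}\le C(t-s)^{\alpha-1}e^{\omega(t-s)}$ (with $\epsilon$-independent constants), the uniform a priori bound $\sup_{\epsilon}\sup_{[0,T]}\|W^{(\epsilon)}\|_{Y_0}\le R$ coming from Theorem \ref{global-sol}, and the resulting local Lipschitz bound for $F\colon Y_0\to Y_{\alpha-1}$. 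The first and third contributions are then $O(\|a_\epsilon-a_0\|_{L^\infty(\mathbb{R})})$, while the nonlinear contribution is closed off by a singular Gronwall inequality of Henry type, yielding $\sup_{t\in[0,T]}\|W^{(\epsilon)}(t)-W^{(0)}(t)\|_{Y_0}\to 0$.

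For the upper semicontinuity I would invoke the standard abstract criterion (see \cite{Livro Alexandre}): if $\bigcup_{\epsilon\in[0,1]}\bigcup_{t\in\mathbb{R}}\mathbb{A}_{(\epsilon)}(t)$ is bounded in $Y_0$ and the processes converge continuously as in the first part, uniformly on bounded sets over compact time intervals, then $\sup_{t\in\mathbb{R}}\operatorname{dist}_{Y_0}(\mathbb{A}_{(\epsilon)}(t),\mathbb{A}_{(0)}(t))\to 0$ as $\epsilon\to 0^+$. The uniform boundedness follows by revisiting the construction of the pullback absorbing family underlying Theorem \ref{teo-pullback} and observing that every constant there depends on $a_\epsilon$ only through $a_0$, $a_1$ and $b_0$ in \eqref{function a is bounded}--\eqref{derivative-a-bounded}, hence is independent of $\epsilon$; this yields a single bounded absorbing set and thus a uniform bound on all the attractors. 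The abstract argument then takes $w_\epsilon\in\mathbb{A}_{(\epsilon)}(t)$, uses backward invariance together with the uniform bound to extract limits, and identifies each limit point as an element of $\mathbb{A}_{(0)}(t)$ through the convergence of the processes and the pullback attraction property of $\mathbb{A}_{(0)}$.

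The main obstacle I anticipate is establishing the convergence in the first part with constants independent of $\epsilon$, precisely because the perturbation $[\mathcal{A}_{(\epsilon)}(t)-\mathcal{A}_{(0)}(t)]W^{(0)}$ involves $A^{\frac12}$ applied to velocity components that, for general data in $Y_0$, carry only $L^2$ regularity and therefore land in the negative-order space $Y_{\alpha-1}$ rather than in $Y_0$. The resolution is to exploit the analytic smoothing afforded by the structural damping $\eta A^{\frac12}$: the integrable singularity $(t-s)^{\alpha-1}$ in the uniform smoothing estimate renders the offending time integrals convergent, and the argument closes via the singular Gronwall lemma. Verifying that these smoothing bounds hold uniformly in $\epsilon$—where the uniform sectoriality of $\{\mathcal{A}_{(\epsilon)}(t)\}$ is indispensable—is the technical heart of the proof.
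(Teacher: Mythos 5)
Your proposal is correct in outline, but for the convergence statement it follows a genuinely different route from the paper. The paper subtracts the two systems and runs a direct energy estimate: it multiplies the difference equations by $u_t$ and $v_t$, controls the coupling terms $(a_\epsilon-a_0)(t)\langle A^{\frac{1}{4}}v_t^{(\epsilon)},A^{\frac{1}{4}}u_t^{(0)}\rangle_X$ (and its counterparts) by $\Vert a_\epsilon-a_0\Vert_{L^\infty(\mathbb{R})}$ times the $X^{\frac{1}{4}}$-norms of the four individual velocities, treats the nonlinearity via the Mean Value Theorem, H\"older's inequality, the growth condition \eqref{Gcondition} and the embedding $X^{\frac{1}{2}}\hookrightarrow L^{2\rho}(\Omega)$, and closes with the classical Gronwall inequality, since the difference vanishes at $t=\tau$. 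The crucial point there is that the quarter-derivative norms of the individual velocities are \emph{not} controlled by the $Y_0$-norm; the paper pays for this by invoking the regularity result of Theorem \ref{RegPA}, so its estimate really applies to trajectories whose velocities stay bounded in $X^{\frac{1}{2}}$ (in particular trajectories on the attractors, which is all that the second assertion needs), and it thereby inherits the restriction $\frac{n-1}{n-2}\le\rho<\frac{n}{n-2}$. Your Duhamel decomposition, with the identity $U_{(\epsilon)}(t,s)-U_{(0)}(t,s)=\int_s^t U_{(\epsilon)}(t,r)[\mathcal{A}_{(0)}(r)-\mathcal{A}_{(\epsilon)}(r)]U_{(0)}(r,s)\,dr$, the bound of the operator difference by $C\Vert a_\epsilon-a_0\Vert_{L^\infty(\mathbb{R})}$, uniform-in-$\epsilon$ smoothing estimates, and the singular Gronwall lemma, avoids attractor regularity altogether: it gives the convergence uniformly for $W_0$ in bounded subsets of $Y_0$ under the hypotheses of Theorem \ref{global-sol} alone, at the price of heavier machinery on the extrapolated scale. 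Two technical points you should make explicit: the operator difference maps $Y_\gamma$ into $Y_{\gamma-1}$ (not into $Y_{\alpha-1}$ from $Y_0$), so the composition must be arranged as $Y_0\to Y_\gamma\to Y_{\gamma-1}\to Y_0$ with the Beta-function integrability of $(t-r)^{\gamma-1}(r-s)^{-\gamma}$, and the intermediate bound requires an interpolation argument; both rest on the uniform sectoriality you correctly single out as the heart of the matter. For the attractor statement your argument and the paper's are the same (uniform boundedness of the attractors, convergence of the processes, pullback attraction of $\mathbb{A}_{(0)}$, invariance, triangle inequality for $d_H$; the paper simply writes it out by hand instead of citing a criterion), except for one overstatement: the criterion yields, and the paper proves, upper semicontinuity pointwise in $t$, as in the paper's definition, whereas your claim of $\sup_{t\in\mathbb{R}}d_H(\mathbb{A}_{(\epsilon)}(t),\mathbb{A}_{(0)}(t))\to 0$ would require uniform pullback attraction, which is neither assumed nor needed here.
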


\section{Preliminaries}\label{Preli}

The main purpose of this section is to briefly introduce the reader to some terminology and facts about the theory of abstract parabolic problems, as well as to present definitions and existence results that appear in the study of pullback attractors for non-autonomous dynamical systems. Let $X$ be a Banach space. As a standard notation, we denote by $\mathcal{L}(X)$ the space of all bounded linear operators from $X$ into itself. Let $\{ \mathcal{B}(t): t \in \mathbb{R} \}$ be a family of unbounded closed linear operators, where each $\mathcal{B}(t)$ has the same dense subspace $D$ of $X$ as domain.

\subsection{Non-autonomous abstract linear problem.}
Consider the singularly non-autono\-mous abstract linear parabolic problem of the form
\begin{equation}\label{Ididit}
\begin{cases}
\dfrac{du}{dt} = -\mathcal{B}(t)u, \ t > \tau, \\
u(\tau) = u_0 \in D.
\end{cases}
\end{equation}

The term \textit{singularly non-autonomous} is used to evidence the fact that the unbounded operator $\mathcal{B}(t)$ has explicit dependence with the time. When it comes to the parabolic structure of the above problem, we assume the following conditions:
\begin{enumerate}
\item[$(A1)$] The family of operators $\mathcal{B}(t)\colon  D\subset X \to X$ is \textit{uniformly sectorial} in $X$; that is, $\mathcal{B}(t)$ is closed and densely defined  for every $t \in \mathbb{R}$, with domain $D$ fixed, and for all $T \in \mathbb{R}$ there exists a constant $C_1 > 0$, independent of $T$, such that
$$
\Vert (\mathcal{B}(t) +\lambda I)^{-1} \Vert_{\mathcal{L}(X)} \leq \frac{C_1}{|\lambda| + 1}
$$
for all $\lambda \in \mathbb{C}$ with $\text{Re}(\lambda) \geq 0$ and for all $t \in [-T, T]$. 

\item[$(A2)$] The map $\mathbb{R}\ni t\mapsto \mathcal{B}(t)$ is \textit{uniformly Hölder continuous} in $X$; that is, for all $T \in \mathbb{R}$ there are constants $C_2 > 0$ and $0 < \epsilon_0 \leq 1$, independent of $T$, such that
$$
\Vert [\mathcal{B}(t)-\mathcal{B}(s)]\mathcal{B}^{-1}(\tau) \Vert_{\mathcal{L}(X)} \leq C_2|t-s|^{\epsilon_0}
$$
for every $t, s, \tau \in [-T, T]$.
\end{enumerate}

Denote by $\mathcal{B}_0$ the operator $\mathcal{B}(t_0)$ for some $t_0 \in \mathbb{R}$ fixed. If $X^{\alpha}$ denotes the domain of $\mathcal{B}_0^{\alpha}$, $\alpha > 0$, with the graph norm, and $X^0 = X$, then $\{ X^{\alpha}: \alpha\geq 0\}$ is the fractional power scale associated with $\mathcal{B}_0$. For more details about fractional powers of operators, see Henry \cite{Henry}.

From $(A1)$, $-\mathcal{B}(t)$ is the generator of an analytic semigroup $\{ e^{-\tau\mathcal{B}(t)}: \tau\geq 0\} \subset \mathcal{L}(X)$. Using this and the fact that $0\in \rho(\mathcal{B}(t))$, one can obtain  a constant $C>0$ such that the following estimates hold:
$$
\Vert e^{-\tau\mathcal{B}(t)} \Vert_{\mathcal{L}(X)} \leq C, \ \tau\geq 0, \ t \in \mathbb{R},
$$
and
$$
\Vert \mathcal{B}(t) e^{-\tau\mathcal{B}(t)} \Vert_{\mathcal{L}(X)} \leq C\tau^{-1}, \ \tau > 0, \ t \in \mathbb{R}.
$$

For a given bounded set $I \subset \mathbb{R}^2$, it follows from $(A2)$, that there exists a constant $K = K(I) > 0$ such that
$$
\Vert \mathcal{B}(t)\mathcal{B}^{-1}(\tau) \Vert_{\mathcal{L}(X)} \leq K,
$$
for all $(t, \tau) \in I$.

Also, the semigroup $\{ e^{-\tau\mathcal{B}(t)}: \tau\geq 0\}$ satisfies
$$
\Vert e^{-\tau\mathcal{B}(t)} \Vert_{\mathcal{L}(X^{\beta}, X^{\alpha})} \leq C(\alpha, \beta)\tau^{\beta - \alpha}, \ \tau > 0, \ t \in \mathbb{R},
$$
where $0\leq\beta\leq\alpha < 1 + \epsilon_0$. 

\subsection{Abstract results on pullback attractors.}
In this subsection, we will present basic definitions and results of the theory of pullback attractors for nonlinear evolution processes. For more details we refer to \cite{Rivero2}, \cite{Livro Alexandre} and \cite{Chepyzhov and Vishik}.

Let $(Z, d)$ be a metric space. An \textit{evolution process} in $Z$ is a two-parameter family $\{ S(t, \tau): t\geq\tau \in\mathbb{R} \}$ of  maps from $Z$ into itself such that:
\begin{enumerate}
\item[$(a)$] $S(t, t) = I$ for all $t \in \mathbb{R}$, ($I$ is the identity operator in $Z$),
\item[$(b)$] $S(t, \tau) = S(t, s)S(s, \tau)$ for all $t\geq s\geq\tau$, and
\item[$(c)$] the map $\{ (t, \tau)\in \mathbb{R}^2: t\geq\tau \} \times Z \ni (t, \tau, x) \mapsto S(t, \tau)x \in Z$ is continuous.
\end{enumerate}

If $\{ S(t, \tau): t\geq\tau \in\mathbb{R}\} \subset  \mathcal{L}(X)$ is an evolution process, then we will call this process as a \textit{ linear evolution process.}

\begin{remark} If the operator $\mathcal{B}(t)\colon  D\subset X \to X$ of equation \eqref{Ididit}
 is uniformly sectorial and uniformly Hölder continuous, then there exists a linear evolution process $\{ L(t, \tau): t\geq\tau \in \mathbb{R} \}$ associated with $\mathcal{B}(t)$, which is given by
$$
L(t, \tau) = e^{-(t-\tau) \mathcal{B}(\tau)} + \int_{\tau}^t L(t, s)[\mathcal{B}(\tau) - \mathcal{B}(s)] e^{-(s-\tau) \mathcal{B}(\tau)} ds, \quad t\geq\tau.
$$

The process $\{ L(t, \tau): t\geq\tau \in \mathbb{R} \}$ satisfies the following condition:
$$
\Vert L(t, \tau) \Vert_{\mathcal{L}(X^{\beta}, X^{\alpha})} \leq C(\alpha, \beta)(t-\tau)^{\beta - \alpha},
$$
where $0\leq\beta\leq\alpha < 1 + \epsilon_0$. The reader may consult \cite{Nascimento} and \cite{Sobo} for more details.
\end{remark}

Now, let us consider the following singularly non-autonomous abstract parabolic problem
\begin{equation} \label{naapp}
\begin{cases}
\dfrac{du}{dt} = -\mathcal{B}(t)u + g(u), \ t > \tau, \\
u(\tau) = u_0 \in D,
\end{cases}
\end{equation}
where the operator $\mathcal{B}(t)\colon D\subset X \to X$ is uniformly sectorial and uniformly Hölder continuous, and the nonlinearity $g$ satisfies some suitable conditions that will be specified later. 
 The nonlinear evolution process $\{ S(t, \tau): t\geq\tau \in\mathbb{R} \}$ associated with $\mathcal{B}(t)$ is given by
$$
S(t, \tau) = L(t, \tau) + \int_{\tau}^t L(t, s) g(S(s, \tau)) ds, \quad  t\geq\tau.
$$

\begin{definition}
Let $g\colon  X^{\alpha} \to X^{\beta}$, $\alpha\in [\beta, \beta + 1)$, be a continuous function. A continuous function $u\colon  [\tau, \tau + t_0] \to X^{\alpha}$ is said to be a \textit{local solution} of the problem \eqref{naapp}, starting at $u_0 \in X^{\alpha}$, if the following conditions hold:
\begin{enumerate}
\item[$(a)$] $u \in C([\tau, \tau + t_0], X^{\alpha}) \cap C^1((\tau, \tau + t_0], X^{\alpha})$;
\item[$(b)$] $u(\tau) = u_0$;
\item[$(c)$] $u(t) \in D(\mathcal{B}(t))$ for all $t \in (\tau, \tau + t_0]$;
\item[$(d)$] $u(t)$ satisfies \eqref{naapp} for all $t \in (\tau, \tau + t_0]$.
\end{enumerate}
\end{definition}

Now we state the following abstract local well-posedness result, whose proof can be found in \cite{Rivero1}.  The reader may consult \cite{Nascimento} for a more general version that includes the critical growth case.

\begin{theorem}\label{abstract existence result}
Assume that the family of operators $\{\mathcal{B}(t): t \in \mathbb{R}\}$ is uniformly sectorial and uniformly Hölder continuous in $X^{\beta}$. If $g\colon  X^{\alpha} \to X^{\beta}$, $\alpha\in [\beta, \beta + 1)$, is a Lipschitz continuous map in bounded subsets of $X^{\alpha}$, then given $r>0$ there exists a time $t_0 > 0$ such that for all $u_0 \in B_{X^{\alpha}}(0, r)$ there exists a unique solution of the problem \eqref{naapp} starting in $u_0$ and defined  on $[\tau, \tau + t_0]$. Moreover, such solutions are continuous with respect to the initial data in $B_{X^{\alpha}}(0, r)$.
\end{theorem}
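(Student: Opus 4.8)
The plan is to reformulate \eqref{naapp} in its mild (variation of constants) form and solve it by the Banach fixed point theorem, exploiting the smoothing estimate
$$
\Vert L(t,\tau) \Vert_{\mathcal{L}(X^{\beta}, X^{\alpha})} \leq C(\alpha,\beta)(t-\tau)^{\beta-\alpha}, \qquad 0 \leq \beta \leq \alpha < 1+\epsilon_0,
$$
recorded in the remark above. Since $\alpha \in [\beta, \beta+1)$ we have $\alpha - \beta < 1$, so the kernel $(t-s)^{\beta-\alpha}$ is integrable on $[\tau,t]$; this integrability is exactly what makes the argument run. Fixing $r>0$, I would introduce the operator
$$
(\mathcal{T}u)(t) = L(t,\tau)u_0 + \int_{\tau}^{t} L(t,s)\, g(u(s))\, ds, \qquad t \in [\tau, \tau+t_0],
$$
acting on the complete metric space
$$
\mathcal{K} = \{ u \in C([\tau,\tau+t_0], X^{\alpha}) : \Vert u(t) \Vert_{X^{\alpha}} \leq R \ \text{for all}\ t \in [\tau,\tau+t_0] \}
$$
equipped with the supremum metric, where $u_0 \in B_{X^{\alpha}}(0,r)$ and $R$ is a fixed multiple of $r$ chosen below. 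Note that any fixed point automatically satisfies $u(\tau)=L(\tau,\tau)u_0=u_0$.

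First I would show that $\mathcal{T}$ maps $\mathcal{K}$ into itself for $t_0$ small. Taking $\alpha=\beta$ in the estimate above bounds $\Vert L(t,\tau)u_0 \Vert_{X^{\alpha}}$ by a constant multiple of $r$; since $g$ is Lipschitz on bounded subsets of $X^{\alpha}$ it is in particular bounded on $B_{X^{\alpha}}(0,R)$, so
$$
\left\Vert \int_{\tau}^{t} L(t,s)\, g(u(s))\, ds \right\Vert_{X^{\alpha}} \leq C(\alpha,\beta)\, \sup_{B_{X^{\alpha}}(0,R)}\Vert g \Vert_{X^{\beta}} \int_{\tau}^{t} (t-s)^{\beta-\alpha}\, ds,
$$
where $\int_{\tau}^{t}(t-s)^{\beta-\alpha}\,ds = (t-\tau)^{1+\beta-\alpha}/(1+\beta-\alpha)\to 0$ as $t_0\to 0$. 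Choosing $R$ an appropriate multiple of $r$ and then $t_0$ small gives $\mathcal{T}(\mathcal{K})\subseteq\mathcal{K}$. Next, for $u,w\in\mathcal{K}$ the Lipschitz constant $\ell=\ell(R)$ of $g$ on $B_{X^{\alpha}}(0,R)$ yields
$$
\Vert (\mathcal{T}u)(t)-(\mathcal{T}w)(t) \Vert_{X^{\alpha}} \leq C(\alpha,\beta)\,\ell\, \frac{t_0^{\,1+\beta-\alpha}}{1+\beta-\alpha}\, \sup_{s}\Vert u(s)-w(s) \Vert_{X^{\alpha}},
$$
so shrinking $t_0$ further (depending only on $R$ and $\ell$, hence only on $r$) makes $\mathcal{T}$ a contraction. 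The Banach fixed point theorem then produces, for every $u_0\in B_{X^{\alpha}}(0,r)$, a unique fixed point $u\in\mathcal{K}$ on the common interval $[\tau,\tau+t_0]$; uniqueness in $\mathcal{K}$ is promoted to uniqueness among all solutions by a standard continuation argument.

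It remains to check that this mild solution is a genuine solution in the sense of the definition and to prove continuous dependence. For the regularity I would run the usual parabolic bootstrap: the continuity of $u$ together with the smoothing of $L$ shows that $s\mapsto g(u(s))$ is locally H\"older continuous into $X^{\beta}$, after which the classical theory for the inhomogeneous linear problem associated with $\mathcal{B}(t)$ (as in Henry \cite{Henry}) gives that $u\in C^1((\tau,\tau+t_0],X^{\alpha})$, that $u(t)\in D(\mathcal{B}(t))$, and that \eqref{naapp} holds pointwise. For continuous dependence, subtracting the integral equations for two initial data $u_0,\tilde u_0$ and using the Lipschitz bound gives
$$
\Vert u(t)-\tilde u(t) \Vert_{X^{\alpha}} \leq C\Vert u_0-\tilde u_0 \Vert_{X^{\alpha}} + C\ell\int_{\tau}^{t}(t-s)^{\beta-\alpha}\Vert u(s)-\tilde u(s) \Vert_{X^{\alpha}}\, ds,
$$
and the conclusion follows from a singular Gronwall inequality. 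The main obstacle throughout is the weak singularity $(t-s)^{\beta-\alpha}$ of the kernel: it is what forces $\alpha-\beta<1$, it requires the generalized rather than the classical Gronwall lemma in the last step, and it must be controlled carefully so that a single $t_0$ works uniformly over the whole ball $B_{X^{\alpha}}(0,r)$.
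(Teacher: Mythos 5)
Your proposal is correct and follows essentially the same route as the proof the paper relies on: the paper does not prove Theorem \ref{abstract existence result} itself but cites \cite{Rivero1} (see also \cite{Nascimento}), where the argument is precisely this one — mild formulation via the linear process $L(t,\tau)$ and its smoothing estimate, a contraction on a ball of $C([\tau,\tau+t_0],X^{\alpha})$ with $t_0$ chosen uniformly over $B_{X^{\alpha}}(0,r)$, a bootstrap to upgrade the mild solution to a solution in the sense of the definition, and a singular Gronwall inequality for continuous dependence. Nothing in your sketch deviates from or falls short of that scheme.
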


In the sequel, we present the concepts concerning the theory of pullback attractors which we will use in this work.
Further details can be found in \cite{Rivero2}, \cite{Livro Alexandre} and \cite{Chepyzhov and Vishik}. 
  Recall that, if $(Z, d)$ is a metric space, then the Hausdorff semidistance between two nonempty subsets $A$ and $B$ of $Z$ is defined by
$$d_H(A, B) = \sup\limits_{a\in A} \inf\limits_{b\in B} d(a, b).$$

\begin{definition}\label{pullback attraction}
Let $\{ S(t, \tau): t\geq\tau \in\mathbb{R} \}$ be an evolution process in $Z$. Given $t\in\mathbb{R}$ and $A, B$ subsets of $Z$, we say that $A$ \textit{pullback attracts} $B$ at time $t$ if
\begin{equation}\label{atrai}
\lim\limits_{\tau \to -\infty} d_H(S(t, \tau)B, A) = 0,
\end{equation}
where $S(t, \tau)B = \{ S(t, \tau)x : x \in B \}$. The set $A$ pullback attracts bounded sets at time $t$, if $(\ref{atrai})$ holds for every bounded subset $B$ of $Z$. Moreover, we say that a time-dependent family $\{ A(t): t\in\mathbb{R} \}$ of subsets of $Z$ pullback attracts bounded subsets of $Z$, if $A(t)$ pullback attracts bounded sets at time $t$, for each $t\in\mathbb{R}$.
\end{definition}

\begin{definition} 
A family of compact subsets $\{ \mathbb{A}(t): t\in\mathbb{R} \}$ of $Z$ is a \textit{pullback attractor} for the evolution process $\{ S(t, \tau): t\geq\tau \in\mathbb{R} \}$ if
\begin{enumerate}
\item[$(i)$] $\{ \mathbb{A}(t): t\in\mathbb{R} \}$ is invariant; that is, $S(t, \tau) \mathbb{A}(\tau) = \mathbb{A}(t)$ for all $t\geq\tau$,
\item[$(ii)$] $\{ \mathbb{A}(t): t\in\mathbb{R} \}$ pullback attracts bounded subsets of $Z$, in the sense of Definition \ref{pullback attraction}, and
\item[$(iii)$] $\{ \mathbb{A}(t): t\in\mathbb{R} \}$ is the minimal family of closed sets satisfying property $(ii)$.
\end{enumerate}
\end{definition}

\begin{definition}\label{def pullback asymptotically compact}  
An evolution process $\{ S(t, \tau): t\geq\tau \in\mathbb{R} \}$ in $Z$ is said to be \textit{pullback asymptotically compact} if, for each $t\in\mathbb{R}$, each sequence $\{\tau_k\}_{k\in\mathbb{N}}$ with $\tau_k \leq t$ for all $k\in\mathbb{N}$ and $\tau_k \xrightarrow{k\rightarrow +\infty} -\infty$, and each bounded sequence $\{x_k\}_{k\in\mathbb{N}} \subset Z$, then the sequence $\{S(t, \tau_k)x_k\}_{k\in\mathbb{N}}$ has a convergent subsequence.
\end{definition}

\begin{definition} 
We say that a set $B\subset Z$ \textit{pullback absorbs} bounded sets at time $t\in\mathbb{R}$ if, for each bounded subset $D$ of $Z$, there exists a time $T = T(t, D) \leq t$ such that $S(t, \tau)D \subset B$ for all $\tau\leq T$. Moreover, we say that a time-dependent family $\{ B(t): t\in\mathbb{R} \}$ of subsets of $Z$ pullback absorbs bounded sets of $Z$, if $B(t)$ pullback absorbs bounded sets at time $t$, for each $t\in\mathbb{R}$.
\end{definition}

\begin{definition}\label{def pullback strongly bounded dissipative}  
We say that an evolution process $\{ S(t, \tau): t\geq\tau \in\mathbb{R} \}$ in $Z$ is:
\begin{enumerate}
\item[(i)] \textit{pullback strongly bounded} if, for each bounded subset $B$ of $Z$ and each $t\in\mathbb{R}$, then the set $\bigcup\limits_{s \leq t} \gamma_p (B, s)$ is bounded, where $\gamma_p (B, t) = \bigcup\limits_{\tau \leq t} S(t, \tau)B$ 
 is  the \textit{pullback orbit} of $B\subset Z$ at time $t\in\mathbb{R}$. 
\item[(ii)]  \textit{pullback strongly bounded dissipative} if, for each $t\in\mathbb{R}$, then there is a bounded subset $B(t)$ of $Z$ which pullback absorbs bounded subsets of $Z$ at time $s$ for each $s\leq t$; that is, given a bounded subset $D$ of $Z$ and $s\leq t$, there exists $\tau_0(s, D)$ such that $S(s, \tau)D \subset B(t)$ for all $\tau \leq \tau_0(s, D)$.
 \end{enumerate}
\end{definition}

\begin{theorem}\label{existence of the pullback attractor}  
If an evolution process $\{ S(t, \tau): t\geq\tau \in\mathbb{R} \}$ in $Z$ is pullback strongly bounded dissipative and pullback asymptotically compact, then $\{ S(t, \tau): t\geq\tau \in\mathbb{R} \}$ has a pullback attractor $\{ \mathbb{A}(t): t\in\mathbb{R} \}$, such  that $\bigcup\limits_{\tau\leq t} \mathbb{A}(\tau)$ is bounded for each $t\in\mathbb{R}$.
\end{theorem}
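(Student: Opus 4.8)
The plan is to construct the pullback attractor explicitly via pullback $\omega$-limit sets, letting the two hypotheses play complementary roles: pullback asymptotic compactness will supply compactness, attraction and invariance of the $\omega$-limit sets, while pullback strongly bounded dissipativeness will guarantee that a single bounded set dominates all of them and forces $\bigcup_{\tau\le t}\mathbb{A}(\tau)$ to be bounded. For a bounded set $D\subset Z$ and $t\in\mathbb{R}$ I define the pullback $\omega$-limit set
\[
\omega_p(D,t)=\bigcap_{s\le t}\overline{\bigcup_{\tau\le s}S(t,\tau)D},
\]
so that $y\in\omega_p(D,t)$ precisely when there exist sequences $\tau_k\to-\infty$ and $x_k\in D$ with $S(t,\tau_k)x_k\to y$.

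First I would establish the basic properties of $\omega_p(D,t)$. Nonemptiness and compactness follow from pullback asymptotic compactness together with a diagonal argument: any sequence assembled from pullback limits admits a convergent subsequence whose limit, by closedness of the intersection, lies in $\omega_p(D,t)$. Attraction, i.e. $\lim_{\tau\to-\infty}d_H(S(t,\tau)D,\omega_p(D,t))=0$, is proved by contradiction, since a sequence staying at distance $\ge\delta$ would, by pullback asymptotic compactness, have a subsequence converging to a point that by definition belongs to $\omega_p(D,t)$. Finally I would prove the invariance $S(t,\sigma)\omega_p(D,\sigma)=\omega_p(D,t)$ for $t\ge\sigma$: the inclusion $\subseteq$ uses only continuity of $S(t,\sigma)$ and the identity $S(t,\tau)=S(t,\sigma)S(\sigma,\tau)$, whereas $\supseteq$ again invokes pullback asymptotic compactness to extract a convergent subsequence of the intermediate states $S(\sigma,\tau_k)x_k$.

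Next I would bring in dissipativeness. Let $\{B(t)\}$ be the family from Definition \ref{def pullback strongly bounded dissipative}(ii). Since $B(t)$ pullback absorbs bounded sets at every time $s\le t$, a direct argument gives $\omega_p(D,s)\subseteq\overline{B(t)}$ for every bounded $D$ and every $s\le t$; feeding an auxiliary sequence $\sigma_j\to-\infty$ into the absorption property and using pullback asymptotic compactness at the times $\sigma_j$ upgrades this to $\omega_p(D,t)\subseteq\omega_p(B(t),t)$ for every bounded $D$ (closure of $B(t)$ not changing the $\omega$-limit). Hence
\[
\mathbb{A}(t):=\omega_p(B(t),t)=\overline{\bigcup_{D\ \mathrm{bounded}}\omega_p(D,t)}
\]
is intrinsic, independent of the absorbing family, and each $\mathbb{A}(t)$ is compact by the first step. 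Conditions $(ii)$ and $(iii)$ are then immediate: attraction holds because $\omega_p(D,t)\subseteq\mathbb{A}(t)$ while $\omega_p(D,t)$ already attracts $D$, so $d_H(S(t,\tau)D,\mathbb{A}(t))\le d_H(S(t,\tau)D,\omega_p(D,t))\to0$; minimality holds because any closed attracting family $\{C(t)\}$ must contain every pullback limit of points of $B(t)$, whence $\mathbb{A}(t)\subseteq C(t)$. Invariance $(i)$ follows from the intrinsic description: as $S(t,\sigma)$ is continuous and $\mathbb{A}(\sigma)$ compact, $S(t,\sigma)\overline{\bigcup_D\omega_p(D,\sigma)}=\overline{\bigcup_D S(t,\sigma)\omega_p(D,\sigma)}=\overline{\bigcup_D\omega_p(D,t)}=\mathbb{A}(t)$ by the $\omega$-limit invariance above. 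Lastly, the inclusion $\omega_p(B(s),s)\subseteq\overline{B(t)}$ for all $s\le t$ yields $\bigcup_{s\le t}\mathbb{A}(s)\subseteq\overline{B(t)}$, which is bounded.

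I expect the main obstacle to be invariance, and more precisely reconciling the time-dependence of the absorbing family with it. The reverse inclusion $\omega_p(D,t)\subseteq S(t,\sigma)\omega_p(D,\sigma)$ is the one genuinely requiring pullback asymptotic compactness, and, because the attractor at times $t$ and $\sigma$ is a priori the $\omega$-limit of the \emph{different} sets $B(t)$ and $B(\sigma)$, the invariance argument only closes once one has proved the intrinsic identity $\mathbb{A}(t)=\overline{\bigcup_D\omega_p(D,t)}$. Establishing that identity, which is exactly where strongly bounded dissipativeness (a single $B(t)$ absorbing at all earlier times) is indispensable, is the crux of the proof.
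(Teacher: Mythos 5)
Your proof is correct, but note that the paper itself gives no proof of this statement: Theorem \ref{existence of the pullback attractor} is quoted as a known abstract result from the cited literature (\cite{Rivero2}, \cite{Livro Alexandre}), and your construction --- taking $\mathbb{A}(t)=\omega_p(B(t),t)$, the pullback $\omega$-limit of the pullback strongly absorbing family, with nonemptiness, compactness, attraction and the invariance $S(t,\sigma)\omega_p(D,\sigma)=\omega_p(D,t)$ supplied by pullback asymptotic compactness, and the inclusion $\omega_p(B(s),s)\subseteq\overline{B(t)}$ for $s\leq t$ giving the boundedness of $\bigcup_{s\leq t}\mathbb{A}(s)$ --- is essentially the standard argument found there. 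One inessential remark: in the step $\omega_p(D,t)\subseteq\omega_p(B(t),t)$ you do not actually need pullback asymptotic compactness, since absorption at the intermediate times $\sigma_j$ together with $S(t,\tau_k)=S(t,\sigma_j)S(\sigma_j,\tau_k)$ already places $S(t,\tau_k)x_k$ in $S(t,\sigma_j)B(t)$ for all large $k$, whence $y\in\overline{S(t,\sigma_j)B(t)}$ for every $j$ and the desired inclusion follows directly.
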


\begin{definition}
A \textit{global solution} for an evolution process $\{ S(t, \tau): t\geq\tau \in\mathbb{R} \}$ in $Z$ is a function $\xi: \mathbb{R} \to Z$ such that $S(t, \tau) \xi(\tau) = \xi(t)$ for all $t\geq\tau$.
\end{definition}

It is well-known that if a semigroup has a global attractor, then it is characterized as the union of all bounded global solutions. In the non-autonomous case, an equivalent characterization is given by the following result.

\begin{theorem}\label{globalsolution} If a pullback attractor $\{ \mathbb{A}(t): t\in\mathbb{R} \}$ is bounded in the past then
\[
\mathbb{A}(t) = \{\xi(t): \, \xi(\cdot) \; \text{is a backward-bounded global solution}\}.
\]
\end{theorem}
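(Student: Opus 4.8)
The plan is to prove the two set inclusions separately, relying on the invariance and pullback-attraction properties of $\{\mathbb{A}(t):t\in\mathbb{R}\}$ together with the standing hypothesis that $\bigcup_{s\leq t}\mathbb{A}(s)$ is bounded for each $t$. For the inclusion $\supseteq$, I would take a backward-bounded global solution $\xi(\cdot)$ and fix $t\in\mathbb{R}$. Setting $D=\bigcup_{s\leq t}\{\xi(s)\}$, which is bounded by the definition of backward-boundedness, the global-solution identity $\xi(t)=S(t,\tau)\xi(\tau)$ shows $\xi(t)\in S(t,\tau)D$ for every $\tau\leq t$. Since $\{\mathbb{A}(t):t\in\mathbb{R}\}$ pullback attracts the bounded set $D$, the monotonicity of the Hausdorff semidistance gives $d_H(\{\xi(t)\},\mathbb{A}(t))\leq d_H(S(t,\tau)D,\mathbb{A}(t))\to 0$ as $\tau\to-\infty$; as the left-hand side does not depend on $\tau$ it must vanish, and since $\mathbb{A}(t)$ is compact, hence closed, this yields $\xi(t)\in\mathbb{A}(t)$.

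For the reverse inclusion $\subseteq$ I would fix $t_0\in\mathbb{R}$ and $x\in\mathbb{A}(t_0)$ and construct a backward-bounded global solution $\xi$ with $\xi(t_0)=x$. The forward part is immediate: for $t\geq t_0$ set $\xi(t)=S(t,t_0)x$, which lies in $\mathbb{A}(t)$ by invariance. The backward part is built by iterating the surjectivity contained in the invariance equality $S(t,\tau)\mathbb{A}(\tau)=\mathbb{A}(t)$: since $\mathbb{A}(t_0)=S(t_0,t_0-1)\mathbb{A}(t_0-1)$ there is $x_{-1}\in\mathbb{A}(t_0-1)$ with $S(t_0,t_0-1)x_{-1}=x$, and inductively a sequence $x_{-n}\in\mathbb{A}(t_0-n)$ with $S(t_0-n+1,t_0-n)x_{-n}=x_{-n+1}$. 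Using the cocycle property $(b)$ this gives $S(t_0-k,t_0-n)x_{-n}=x_{-k}$ for $n\geq k$, so defining $\xi(t)=S(t,t_0-n)x_{-n}$ for any $n$ with $t_0-n\leq t$ produces a value independent of the chosen $n$. One then checks directly, again from $(b)$, that $S(t,s)\xi(s)=\xi(t)$ for all $t\geq s$, so $\xi$ is a global solution with $\xi(t_0)=x$.

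Finally, because each $x_{-n}\in\mathbb{A}(t_0-n)$, invariance gives $\xi(s)\in\mathbb{A}(s)$ for every $s$, whence $\bigcup_{s\leq t}\{\xi(s)\}\subseteq\bigcup_{s\leq t}\mathbb{A}(s)$ is bounded for each $t$; thus $\xi$ is backward-bounded and the inclusion $\subseteq$ follows. I expect the backward construction to be the only delicate point: unlike the forward evolution, preimages under $S$ need not be unique, so the argument must commit to a coherent sequence of choices and then verify both the well-definedness of $\xi$ and the cocycle identity for the resulting trajectory. It is worth emphasizing where the hypotheses enter: the surjective half of the invariance equality is what makes the backward selection possible, while the assumption that the attractor is \emph{bounded in the past} is exactly what upgrades the constructed global solution to a backward-bounded one and, dually, what guarantees that the set $D$ in the first inclusion is bounded so that pullback attraction may be invoked.
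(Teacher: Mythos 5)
Your proof is correct. Note, however, that the paper does not prove this statement at all: Theorem \ref{globalsolution} appears in the preliminaries as a well-known characterization, with the reader implicitly referred to the standard literature on pullback attractors (e.g., \cite{Livro Alexandre}, \cite{Rivero2}). What you have written is essentially the canonical argument from those references, carried out completely: for one inclusion, the observation that a backward-bounded global solution $\xi$ satisfies $\xi(t)\in S(t,\tau)D$ with $D=\bigcup_{s\leq t}\{\xi(s)\}$ bounded, so pullback attraction plus closedness of the compact set $\mathbb{A}(t)$ forces $\xi(t)\in\mathbb{A}(t)$; for the converse, the backward selection of preimages $x_{-n}\in\mathbb{A}(t_0-n)$, which is exactly what the surjective half of the invariance identity $S(t,\tau)\mathbb{A}(\tau)=\mathbb{A}(t)$ provides, followed by the verification that the resulting trajectory is well defined, satisfies the cocycle identity, and stays inside the attractor, so that boundedness in the past upgrades it to a backward-bounded global solution. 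You correctly flag the two points a careless argument would miss: preimages are not unique, so one must commit to a coherent sequence of choices (countably many, which is unproblematic), and the hypothesis of boundedness in the past is used precisely once in each direction. One cosmetic remark: the $\xi$ you construct need not be continuous in $t$, but this is harmless because the paper's definition of a global solution requires only the algebraic identity $S(t,\tau)\xi(\tau)=\xi(t)$ for $t\geq\tau$.
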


\medskip

We end this section with the concept of upper semicontinuity.

\begin{definition}
A family $\{\mathbb{A}_\epsilon(t)\}_{\epsilon \in [0, 1]}$, $t \in \mathbb{R}$, of subsets of $Z$ is upper semicontinuous at $\epsilon=0$ if, for each $t \in \mathbb{R}$, then
\[
\lim_{\epsilon\to 0^+} d_H (\mathbb{A}_\epsilon(t),\mathbb{A}_0(t))=0.
\]
\end{definition}

\section{Local and global well-posedness}\label{global}
This section concerns the investigation of the existence of global solutions for \eqref{edp abstrata}. We are going to present 
auxiliary results to conclude Theorem \ref{global-sol}. We start by obtaining some spectral properties for the unbounded linear operator $\mathcal{A}(t)$ given in $(\ref{abstract operator})$ and $(\ref{domain abstract operator})$.

It is not difficult to see that $\det(\mathcal{A}(t)) = A(A+I)$, and therefore that $0 \in \rho(\mathcal{A}(t))$, for all $t \in\mathbb{R}$. Moreover, for each $t \in \mathbb{R}$, the operator $\mathcal{A}^{-1}(t)\colon  Y_0 \to Y_0$ is defined by
\begin{equation}\label{inverse of the abstract operator}
\begin{split}
\mathcal{A}^{-1}(t) \begin{bmatrix} u\\ v\\ w\\ z\\ \end{bmatrix} &=
\begin{bmatrix}
\eta A^{\frac{1}{2}}(A+I)^{-1} & (A+I)^{-1} & a_{\epsilon}(t)A^{\frac{1}{2}}(A+I)^{-1} & 0 \\
-I                                           & 0               & 0                                          & 0 \\
-a_{\epsilon}(t)A^{-\frac{1}{2}}               & 0               & \eta A^{-\frac{1}{2}}            & A^{-1} \\
0                                            & 0               & -I                                         & 0
\end{bmatrix}
\begin{bmatrix} 
u\\ v\\ w\\ z
\end{bmatrix}.
\end{split}
\end{equation}

\begin{proposition}\label{acretivo} 
For each fixed $t \in \mathbb{R}$, the operator $\mathcal{A}(t)$ defined in $(\ref{abstract operator})-(\ref{domain abstract operator})$ is maximal accretive.
\end{proposition}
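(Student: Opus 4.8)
The plan is to verify the two defining properties of a maximal accretive operator on the Hilbert space $Y_0$: accretivity, $\mathrm{Re}\,\langle \mathcal{A}(t)W,W\rangle_{Y_0}\geq 0$ for every $W\in D(\mathcal{A}(t))$, together with the range (maximality) condition $R(\lambda I+\mathcal{A}(t))=Y_0$ for some $\lambda>0$; by the Lumer--Phillips criterion these two facts are equivalent to $\mathcal{A}(t)$ being maximal accretive (i.e. $-\mathcal{A}(t)$ generates a contraction semigroup). Since $D(\mathcal{A}(t))=X^1\times X^{\frac{1}{2}}\times X^1\times X^{\frac{1}{2}}$ is dense in $Y_0$ and $0\in\rho(\mathcal{A}(t))$ forces $\mathcal{A}(t)$ to be closed, the hypotheses of that criterion are in place from the outset.

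The decisive point, and the one requiring care, is the choice of inner product on $Y_0$. I would not use the naive product built from $\langle A^{\frac{1}{2}}\cdot,A^{\frac{1}{2}}\cdot\rangle_X$ on both $X^{\frac{1}{2}}$-slots, because the zeroth-order term $u$ in the first equation would leave an indefinite remainder $\mathrm{Re}\,\langle u,v\rangle_X$. Instead I equip $Y_0$ with the energy inner product
\begin{equation*}
\langle W,\widetilde W\rangle_{Y_0}=\langle (A+I)u,\widetilde u\rangle_X+\langle v,\widetilde v\rangle_X+\langle Aw,\widetilde w\rangle_X+\langle z,\widetilde z\rangle_X,
\end{equation*}
for $W=(u,v,w,z)$ and $\widetilde W=(\widetilde u,\widetilde v,\widetilde w,\widetilde z)$, which is equivalent to the given norm of $Y_0$ because $A$ is positive with $0\in\rho(A)$. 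The weight $A+I$ on the first slot matches the mass term of the Klein--Gordon equation while the third slot carries only the weight $A$; this asymmetry is exactly what produces the cancellations below.

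With this product, for $W=(u,v,w,z)\in D(\mathcal{A}(t))$ I would expand $\langle \mathcal{A}(t)W,W\rangle_{Y_0}$ using the explicit action $(\ref{abstract operator})$ and take real parts. The self-adjointness of $A$ (hence of $A+I$ and $A^{\frac{1}{2}}$) gives three cancellations: the contribution of $-v$ in the first slot cancels that of $(A+I)u$ in the second, namely $\mathrm{Re}\,\langle(A+I)v,u\rangle_X$ against $\mathrm{Re}\,\langle(A+I)u,v\rangle_X$; likewise the $-z$ and $Aw$ contributions cancel; and the skew-symmetric coupling terms $+a_{\epsilon}(t)\langle A^{\frac{1}{2}}z,v\rangle_X$ and $-a_{\epsilon}(t)\langle A^{\frac{1}{2}}v,z\rangle_X$ cancel because $\mathrm{Re}\,\langle A^{\frac{1}{2}}z,v\rangle_X=\mathrm{Re}\,\langle A^{\frac{1}{2}}v,z\rangle_X$. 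What survives is the dissipation
\begin{equation*}
\mathrm{Re}\,\langle \mathcal{A}(t)W,W\rangle_{Y_0}=\eta\,\|A^{\frac{1}{4}}v\|_X^2+\eta\,\|A^{\frac{1}{4}}z\|_X^2\geq 0,
\end{equation*}
which proves accretivity.

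For the maximality condition I would invoke the information already recorded in $(\ref{inverse of the abstract operator})$: since $0\in\rho(\mathcal{A}(t))$, the inverse $\mathcal{A}^{-1}(t)\in\mathcal{L}(Y_0)$ is bounded, say with norm $M$. Writing $\lambda I+\mathcal{A}(t)=\mathcal{A}(t)\big(I+\lambda\,\mathcal{A}^{-1}(t)\big)$, a Neumann series shows that $I+\lambda\,\mathcal{A}^{-1}(t)$ is invertible whenever $0<\lambda<1/M$, hence $\lambda I+\mathcal{A}(t)$ is bijective and in particular $R(\lambda I+\mathcal{A}(t))=Y_0$ for such $\lambda$. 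Together with accretivity this yields maximal accretivity. The only genuine obstacle is the accretivity computation --- more precisely, selecting the energy inner product so that the mass term and the skew-symmetric coupling drop out; once the inner product is fixed, both the cancellations and the range condition are routine.
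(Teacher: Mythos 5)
Your proof is correct. For the accretivity half it is essentially the paper's own computation: the paper likewise expands $\langle\mathcal{A}(t)x,x\rangle_{Y_0}$ and arrives at $\operatorname{Re}\langle\mathcal{A}(t)x,x\rangle_{Y_0}=\eta\Vert A^{\frac{1}{4}}v\Vert_X^2+\eta\Vert A^{\frac{1}{4}}z\Vert_X^2\geq 0$, using exactly the asymmetric weighting you single out --- the first $X^{\frac{1}{2}}$-slot is paired via $\langle (A+I)^{\frac{1}{2}}\cdot,(A+I)^{\frac{1}{2}}\cdot\rangle_X$ (justified by $D((A+I)^{\frac{1}{2}})=D(A^{\frac{1}{2}})$ from \cite[Corollary 1.3.5]{Cholewa}), while the third slot is paired via $\langle A^{\frac{1}{2}}\cdot,A^{\frac{1}{2}}\cdot\rangle_X$ so that $\langle -z,w\rangle_{X^{\frac{1}{2}}}$ cancels against $\langle Aw,z\rangle_X$. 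Your explicit declaration of the energy inner product makes transparent a point the paper leaves implicit: the same symbol $\langle\cdot,\cdot\rangle_{X^{\frac{1}{2}}}$ is effectively computed with weight $A+I$ in the first slot and weight $A$ in the third, and without that asymmetry an indefinite remainder $-\operatorname{Re}\langle z,w\rangle_X$ would survive. The genuine difference is in the maximality half: the paper does not prove it, saying only that the argument is analogous to \cite[Proposition 4.3]{BCNS2} and including nothing beyond the accretivity computation, whereas you close the argument self-containedly by combining $0\in\rho(\mathcal{A}(t))$ (available from the explicit inverse $(\ref{inverse of the abstract operator})$) with the factorization $\lambda I+\mathcal{A}(t)=\mathcal{A}(t)\bigl(I+\lambda\mathcal{A}^{-1}(t)\bigr)$ and a Neumann series, giving $R(\lambda I+\mathcal{A}(t))=Y_0$ for $0<\lambda<1/\Vert\mathcal{A}^{-1}(t)\Vert_{\mathcal{L}(Y_0)}$; since for an accretive operator surjectivity of $\lambda I+\mathcal{A}(t)$ for a single $\lambda>0$ suffices, this is sound. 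In short, your route coincides with the paper's where the paper gives details, and replaces the paper's appeal to an external reference with a short, complete range argument.
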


\begin{proof}  The proof is analogous to the proof of \cite[Proposition 4.3]{BCNS2}. We include here only the proof of accretivity of $\mathcal{A}(t)$. Let $t \in \mathbb{R}$ be fixed but arbitrary, and let $x =\begin{bmatrix} u & v & w & z\end{bmatrix}^T \in D(\mathcal{A}(t))$. At first, we note that $ \langle v, u\rangle_{X^{\frac{1}{2}}} =  \left\langle (A+I)^{\frac{1}{2}}v, (A+I)^{\frac{1}{2}}u \right\rangle_X$, because from \cite[Corollary 1.3.5]{Cholewa}, we have $D((A+I)^{\frac{1}{2}}) = D(A^{\frac{1}{2}})$. Thus,
\[
\begin{split}
\langle \mathcal{A}(t)x, x\rangle_{Y_0} &=  \langle -v, u\rangle_{X^{\frac{1}{2}}} + \langle (A+I)u + \eta A^{\frac{1}{2}}v + a_{\epsilon}(t)A^{\frac{1}{2}}z, v\rangle_X + \langle -z, w\rangle_{X^{\frac{1}{2}}} \\
& \quad + \langle -a_{\epsilon}(t)A^{\frac{1}{2}}v + Aw + \eta A^{\frac{1}{2}}z, z\rangle_X \\
& = \left\langle (A+I)^{\frac{1}{2}}u, (A+I)^{\frac{1}{2}}v \right\rangle_X - \left\langle (A+I)^{\frac{1}{2}}v, (A+I)^{\frac{1}{2}}u \right\rangle_X \\
& \quad + a_{\epsilon}(t)\left(\langle A^{\frac{1}{2}}z, v\rangle_X - \langle v, A^{\frac{1}{2}}z\rangle_X\right) \\
& \quad + \langle Aw, z\rangle_X - \langle z, Aw\rangle_X + \eta\Vert A^{\frac{1}{4}}v \Vert_X^2 + \eta\Vert A^{\frac{1}{4}}z \Vert_X^2.
\end{split}
\]

Hence, 
\begin{equation}\label{real part}
\text{Re}(\langle \mathcal{A}(t)x, x\rangle_{Y_0}) = \eta\Vert A^{\frac{1}{4}}v \Vert_X^2 + \eta\Vert A^{\frac{1}{4}}z \Vert_X^2 \geq 0,
\end{equation}
which proves the accretivity of $\mathcal{A}(t)$.
\end{proof}

\begin{proposition}
If $Y_{-1}$ denotes the extrapolation space of $Y_0 = X^{\frac{1}{2}} \times X \times X^{\frac{1}{2}} \times X$ generated by the operator $\mathcal{A}^{-1}(t)$, then
\[
Y_{-1} = X \times X^{-\frac{1}{2}} \times X \times X^{-\frac{1}{2}}.
\]
\end{proposition}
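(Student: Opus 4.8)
The plan is to unravel the definition of the extrapolation space and reduce the claim to a two-sided norm equivalence. Recall that $Y_{-1}$ is by definition the completion of $Y_0$ with respect to the norm $\|W\|_{Y_{-1}} := \|\mathcal{A}^{-1}(t)W\|_{Y_0}$; since $0 \in \rho(\mathcal{A}(t))$ this is indeed a norm on $Y_0$. Thus it suffices to prove that, for $W = \begin{bmatrix} u & v & w & z\end{bmatrix}^T \in Y_0$,
\[
\|\mathcal{A}^{-1}(t)W\|_{Y_0} \approx \|u\|_X + \|v\|_{X^{-\frac{1}{2}}} + \|w\|_X + \|z\|_{X^{-\frac{1}{2}}},
\]
with constants independent of $W$ (and of $t$); for then the completion of $Y_0$ under $\|\cdot\|_{Y_{-1}}$ coincides with the completion of $Y_0$ under the right-hand norm, which is exactly $X \times X^{-\frac{1}{2}} \times X \times X^{-\frac{1}{2}}$. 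Here one uses that $Y_0 = X^{\frac{1}{2}} \times X \times X^{\frac{1}{2}} \times X$ is dense in this product, because $X^{\frac{1}{2}} \hookrightarrow X$ and $X \hookrightarrow X^{-\frac{1}{2}}$ densely, and recalls the identification $\|\cdot\|_{X^{-\frac{1}{2}}} = \|A^{-\frac{1}{2}}\cdot\|_X$.

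First I would read off $\mathcal{A}^{-1}(t)W$ from \eqref{inverse of the abstract operator}, obtaining the four entries
\[
p_1 = \eta A^{\frac{1}{2}}(A+I)^{-1}u + (A+I)^{-1}v + a_{\epsilon}(t)A^{\frac{1}{2}}(A+I)^{-1}w, \qquad p_2 = -u,
\]
\[
p_3 = -a_{\epsilon}(t)A^{-\frac{1}{2}}u + \eta A^{-\frac{1}{2}}w + A^{-1}z, \qquad p_4 = -w,
\]
and measure them in the $Y_0 = X^{\frac{1}{2}}\times X\times X^{\frac{1}{2}}\times X$ norm. The two middle factors are immediate: $\|p_2\|_X = \|u\|_X$ and $\|p_4\|_X = \|w\|_X$ recover the two $X$-components exactly. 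For $p_1$ and $p_3$, which live in $X^{\frac{1}{2}}$, I would apply $A^{\frac{1}{2}}$ and use two elementary facts: that $A(A+I)^{-1} = I - (A+I)^{-1}$ is bounded on $X$ with bounded inverse $I + A^{-1}$ (recall $0 \in \rho(A)$, so $A^{-1} \in \mathcal{L}(X)$), and that $A^{\frac{1}{2}}(A+I)^{-1}A^{\frac{1}{2}} = A(A+I)^{-1}$ since these are all functions of $A$. This yields $\|A^{\frac{1}{2}}(A+I)^{-1}v\|_X = \|A(A+I)^{-1}A^{-\frac{1}{2}}v\|_X \approx \|A^{-\frac{1}{2}}v\|_X = \|v\|_{X^{-\frac{1}{2}}}$, while $A^{\frac{1}{2}}p_3 = -a_{\epsilon}(t)u + \eta w + A^{-\frac{1}{2}}z$ isolates the term $A^{-\frac{1}{2}}z$.

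Combining these computations with the triangle inequality and the boundedness of $A(A+I)^{-1}$ gives the upper bound $\|\mathcal{A}^{-1}(t)W\|_{Y_0} \le C(\|u\|_X + \|v\|_{X^{-\frac{1}{2}}} + \|w\|_X + \|z\|_{X^{-\frac{1}{2}}})$. For the reverse inequality I would invert the relations: having already controlled $\|u\|_X$ and $\|w\|_X$ by $\|p_2\|_X$ and $\|p_4\|_X$, I would write $A^{\frac{1}{2}}(A+I)^{-1}v = A^{\frac{1}{2}}p_1 - \eta A(A+I)^{-1}u - a_{\epsilon}(t)A(A+I)^{-1}w$ and $A^{-\frac{1}{2}}z = A^{\frac{1}{2}}p_3 + a_{\epsilon}(t)u - \eta w$, and then estimate $\|v\|_{X^{-\frac{1}{2}}}$ and $\|z\|_{X^{-\frac{1}{2}}}$ in terms of $\|p_1\|_{X^{\frac{1}{2}}}, \|p_3\|_{X^{\frac{1}{2}}}$ and the already-bounded quantities $\|u\|_X, \|w\|_X$. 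All constants stay controlled uniformly in $t$ and $\epsilon$ because $a_{\epsilon}(t)$ is squeezed between $a_0$ and $a_1$ by \eqref{function a is bounded}, so the resulting extrapolation space does not depend on the fixed $t$. I expect the only genuinely delicate point to be the mapping behaviour of the mixed operator $A^{\frac{1}{2}}(A+I)^{-1}$ between the scale spaces; once the identity $A^{\frac{1}{2}}(A+I)^{-1}A^{\frac{1}{2}} = A(A+I)^{-1}$ together with the bounded invertibility of $A(A+I)^{-1}$ is in hand, everything else reduces to routine bookkeeping with the triangle inequality.
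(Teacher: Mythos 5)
Your proof is correct and complete: the reduction of the extrapolation-space identification to a two-sided norm equivalence for $\|\mathcal{A}^{-1}(t)W\|_{Y_0}$, the key identity $A^{\frac{1}{2}}(A+I)^{-1}A^{\frac{1}{2}} = A(A+I)^{-1}$ together with the bounded invertibility of $A(A+I)^{-1}$ (inverse $I+A^{-1}$, using $0\in\rho(A)$), and the uniformity in $t$ and $\epsilon$ coming from \eqref{function a is bounded} are all sound, and the density of $Y_0$ in $X \times X^{-\frac{1}{2}} \times X \times X^{-\frac{1}{2}}$ closes the completion argument. The paper itself omits the proof, referring to an analogous result elsewhere (\cite{BCNS1}), and your computation via the explicit form \eqref{inverse of the abstract operator} is exactly the standard argument that reference carries out, so your proposal supplies in full what the paper leaves implicit.
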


\begin{proof}
This proof is analogous to the proof of \cite[Proposition 2]{BCNS1} and so we omit it. See also \cite{BCNS2}.
\end{proof}

\begin{proposition}
The operator $\mathcal{A}^{-1}(t)$ given in $(\ref{inverse of the abstract operator})$ is a compact map for each $t \in \mathbb{R}$.
\end{proposition}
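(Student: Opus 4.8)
The plan is to exploit the compact resolvent of $A$: since $-A$ generates a compact analytic semigroup on $X$, the operator $A^{-\gamma}\in\mathcal{L}(X)$ is compact for every $\gamma>0$, and equivalently the inclusion $X^{\beta}\hookrightarrow X^{\alpha}$ is compact whenever $\beta>\alpha\geq 0$. I would use this to show that each of the nine nonzero block entries of the matrix $\mathcal{A}^{-1}(t)$ in $(\ref{inverse of the abstract operator})$ is compact when read as a map between the appropriate factors of $Y_0=X^{\frac12}\times X\times X^{\frac12}\times X$, and then conclude that the whole block operator is compact.

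First I would record the regularity gains used throughout. The building blocks $(A+I)^{-1}$ and $A^{-1}$ gain one order in the fractional power scale (they map $X^{s}$ into $X^{s+1}$), while $A^{-\frac12}$ and $A^{\frac12}(A+I)^{-1}$ gain half an order. Tracking the domains and codomains dictated by the positions in the matrix, each such entry sends its input factor boundedly into $X^{1}$, which embeds compactly into the required output factor $X^{\frac12}$: for instance the $(1,2)$ and $(3,4)$ entries $(A+I)^{-1}$ and $A^{-1}$ map $X\to X^{1}\hookrightarrow X^{\frac12}$, the $(1,1)$ and $(1,3)$ entries $\eta A^{\frac12}(A+I)^{-1}$ and $a_{\epsilon}(t)A^{\frac12}(A+I)^{-1}$ map $X^{\frac12}\to X^{1}\hookrightarrow X^{\frac12}$, and the $(3,1)$, $(3,3)$ entries $-a_{\epsilon}(t)A^{-\frac12}$, $\eta A^{-\frac12}$ do the same. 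Each of these is therefore the composition of a bounded map with a compact embedding, hence compact.

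The only genuinely delicate point — and the one I would emphasize — concerns the two entries equal to $-I$, in positions $(2,1)$ and $(4,3)$. One might naively fear that an operator containing identity blocks cannot be compact; the resolution is that these blocks act as maps from the input factor $X^{\frac12}$ into the output factor $X$, and the inclusion $X^{\frac12}\hookrightarrow X$ is itself compact precisely because $A$ has compact resolvent. Thus even the identity entries are compact in the relevant topology, and this reinterpretation is the crux of the argument.

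Finally I would assemble the pieces by the standard fact that a block operator on $Y_0$ all of whose entries are compact is compact: writing $\mathcal{A}^{-1}(t)=\sum_{i,j}\iota_i\,(\mathcal{A}^{-1}(t))_{ij}\,\pi_j$ with $\pi_j$ and $\iota_i$ the coordinate projections and inclusions, every summand is compact and a finite sum of compact operators is compact. Equivalently, and perhaps more cleanly, the same regularity gains show that $\mathcal{A}^{-1}(t)$ maps $Y_0$ boundedly into $Y_1=D(\mathcal{A}(t))=X^{1}\times X^{\frac12}\times X^{1}\times X^{\frac12}$, and the inclusion $Y_1\hookrightarrow Y_0$ is compact because it is compact in each factor; the composition is then compact. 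I expect no real obstacle beyond the bookkeeping of the fractional scale and the correct reading of the identity blocks as compact embeddings.
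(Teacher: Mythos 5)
Your proposal is correct and, in its closing ``more cleanly'' formulation, coincides exactly with the paper's proof: show that $\mathcal{A}^{-1}(t)$ maps bounded subsets of $Y_0$ into bounded subsets of $Y_1 = X^{1}\times X^{\frac{1}{2}}\times X^{1}\times X^{\frac{1}{2}}$ and compose with the compact embedding $Y_1 \hookrightarrow Y_0$. The entry-by-entry variant (including the correct reading of the $-I$ blocks as the compact embedding $X^{\frac{1}{2}}\hookrightarrow X$) is just a finer-grained repackaging of the same regularity gain; the only slip is cosmetic, since the matrix has eight nonzero entries, not nine.
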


\begin{proof}
Let $B \subset Y_0$ be a bounded set and denote $Y_1 = X^1 \times X^{\frac{1}{2}} \times X^1 \times X^{\frac{1}{2}}$.
At first, note that since $(A+I)(A+I)^{-1} = I$ and $A$ is uniformly sectorial, we have
\begin{equation*}
\Vert A(A+I)^{-1} \Vert_{\mathcal{L}(X)} \leq 1 + \Vert (A+I)^{-1} \Vert_{\mathcal{L}(X)} \leq 1+M,
\end{equation*}
for some constant $M>0$. Thus,  for $x =\begin{bmatrix} u & v & w & z\end{bmatrix}^T \in B$, we have
\[
\begin{split}
&\Vert \mathcal{A}^{-1}(t)x \Vert_{Y_1} \\
& = \Vert \eta A^{\frac{1}{2}}(A+I)^{-1}u + (A+I)^{-1}v + a_{\epsilon}(t)A^{\frac{1}{2}}(A+I)^{-1}w \Vert_{X^1} + \Vert -u\Vert_{X^{\frac{1}{2}}} \\
& + \Vert -a_{\epsilon}(t)A^{-\frac{1}{2}}u + \eta A^{-\frac{1}{2}}w + A^{-1}z \Vert_{X^1} + \Vert -w\Vert_{X^{\frac{1}{2}}} \\
& \leq \eta\Vert A(A+I)^{-1}\Vert_{\mathcal{L}(X)}\Vert A^{\frac{1}{2}}u\Vert_X + \Vert A(A+I)^{-1}\Vert_{\mathcal{L}(X)}\Vert v\Vert_X \\
& + a_1\Vert A(A+I)^{-1}\Vert_{\mathcal{L}(X)}\Vert A^{\frac{1}{2}}w\Vert_X + (1 + a_1)\Vert u\Vert_{X^{\frac{1}{2}}} + (1 + \eta)\Vert w\Vert_{X^{\frac{1}{2}}} + \Vert z\Vert_X \\
& \leq [\eta(1 + M) + 1 + a_1]\Vert u\Vert_{X^{\frac{1}{2}}} + (1 + M)\Vert v\Vert_X + [a_1(1 + M) + 1 + \eta]\Vert w\Vert_{X^{\frac{1}{2}}} + \Vert z\Vert_X \\
& \leq C\left(\Vert u\Vert_{X^{\frac{1}{2}}} + \Vert v\Vert_X + \Vert w\Vert_{X^{\frac{1}{2}}} + \Vert z\Vert_X\right),
\end{split}
\]
where $C$ is a positive constant, that is,
$$
\Vert \mathcal{A}^{-1}(t)x \Vert_{Y_1} \leq C\Vert x\Vert_{Y_0}.
$$

Thus,  $\mathcal{A}^{-1}(t)B$ is bounded in $Y_1$. Using the compact embedding $Y_1 \hookrightarrow Y_0$, 
we conclude that the operator $\mathcal{A}^{-1}(t)$ is compact.
\end{proof}

\begin{proposition}
The family of operators $\{\mathcal{A}(t): t \in \mathbb{R}\}$, defined in $(\ref{abstract operator})-(\ref{domain abstract operator})$, is uniformly Hölder continuous in $Y_{-1}$.
\end{proposition}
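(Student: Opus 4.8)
The plan is to verify condition $(A2)$ with base space $X = Y_{-1}$ and fixed domain $D = Y_0$; that is, to produce constants $C_2 > 0$ and $\epsilon_0 \in (0,1]$, independent of $T$, such that $\Vert [\mathcal{A}(t) - \mathcal{A}(s)]\mathcal{A}^{-1}(\tau)\Vert_{\mathcal{L}(Y_{-1})} \leq C_2 |t-s|^{\epsilon_0}$ for all $t, s, \tau \in [-T, T]$. The crucial observation is that, as displayed in \eqref{abstract operator}, the only time dependence of $\mathcal{A}(t)$ enters through the scalar coefficient $a_{\epsilon}(t)$, which occupies exactly the two off-diagonal entries in positions $(2,4)$ and $(4,2)$. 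Consequently the difference $\mathcal{A}(t) - \mathcal{A}(s)$ factors as a scalar multiple of a fixed, time-independent matrix operator, and the whole estimate reduces to the H\"older continuity \eqref{hol-a} of $a_{\epsilon}$ together with bounded mapping properties on the fractional power scale of $A$.

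Concretely, first I would write
\[
\mathcal{A}(t) - \mathcal{A}(s) = [a_{\epsilon}(t) - a_{\epsilon}(s)]\,\mathcal{P}, \qquad \mathcal{P} := \begin{bmatrix} 0 & 0 & 0 & 0 \\ 0 & 0 & 0 & A^{\frac{1}{2}} \\ 0 & 0 & 0 & 0 \\ 0 & -A^{\frac{1}{2}} & 0 & 0 \end{bmatrix}.
\]
Next I would check that $\mathcal{P} \in \mathcal{L}(Y_0, Y_{-1})$: for $(u,v,w,z)^T \in Y_0 = X^{\frac{1}{2}} \times X \times X^{\frac{1}{2}} \times X$ one has $\mathcal{P}(u,v,w,z)^T = (0,\, A^{\frac{1}{2}}z,\, 0,\, -A^{\frac{1}{2}}v)^T$, and since $A^{\frac{1}{2}}$ maps $X = X^0$ boundedly into $X^{-\frac{1}{2}}$, this vector lies in $X \times X^{-\frac{1}{2}} \times X \times X^{-\frac{1}{2}} = Y_{-1}$ with a bound $\Vert \mathcal{P}(u,v,w,z)^T\Vert_{Y_{-1}} \leq C\Vert (u,v,w,z)^T\Vert_{Y_0}$. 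Then, reading off the explicit formula \eqref{inverse of the abstract operator}, I would verify that $\mathcal{A}^{-1}(\tau)$ maps $Y_{-1} = X \times X^{-\frac{1}{2}} \times X \times X^{-\frac{1}{2}}$ boundedly into $Y_0$; entrywise this amounts to the elementary shift properties $A^{\frac{1}{2}}(A+I)^{-1}\colon X \to X^{\frac{1}{2}}$, $(A+I)^{-1}\colon X^{-\frac{1}{2}} \to X^{\frac{1}{2}}$, $A^{-\frac{1}{2}}\colon X \to X^{\frac{1}{2}}$ and $A^{-1}\colon X^{-\frac{1}{2}} \to X^{\frac{1}{2}}$, all valid on the fractional power scale of the uniformly sectorial operator $A$. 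Since the only $\tau$-dependence of $\mathcal{A}^{-1}(\tau)$ is through $a_{\epsilon}(\tau)$, which by \eqref{function a is bounded} satisfies $a_{\epsilon}(\tau) \leq a_1$ for all $\tau$ and all $\epsilon \in [0,1]$, the bound $\Vert \mathcal{A}^{-1}(\tau)\Vert_{\mathcal{L}(Y_{-1}, Y_0)} \leq C'$ is uniform in $\tau \in [-T,T]$ (indeed in $\tau \in \mathbb{R}$) and in $\epsilon$.

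Combining these observations with the H\"older hypothesis \eqref{hol-a}, I would conclude
\[
\Vert [\mathcal{A}(t) - \mathcal{A}(s)]\mathcal{A}^{-1}(\tau)\Vert_{\mathcal{L}(Y_{-1})} \leq |a_{\epsilon}(t) - a_{\epsilon}(s)|\,\Vert \mathcal{P}\Vert_{\mathcal{L}(Y_0, Y_{-1})}\,\Vert \mathcal{A}^{-1}(\tau)\Vert_{\mathcal{L}(Y_{-1}, Y_0)} \leq C_2 |t-s|^{\beta},
\]
which is precisely $(A2)$ with $\epsilon_0 = \beta$. I expect the only real care to be needed in the bookkeeping on the extrapolated scale: one must consistently regard $\mathcal{A}(t)$ as its realization on $Y_{-1}$ with domain $Y_0$, so that $\mathcal{A}^{-1}(\tau)$ is the isomorphism $Y_{-1} \to Y_0$ while $\mathcal{A}(t) - \mathcal{A}(s)$ is read as a map $Y_0 \to Y_{-1}$, and check that each matrix entry respects the correct shift by $\pm\tfrac{1}{2}$ on the scale $\{X^{\alpha}\}$. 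Once this is in place, the H\"older estimate and the uniformity in $\tau$ and $\epsilon$ follow directly from \eqref{hol-a} and \eqref{function a is bounded}; no splitting or compactness argument is required.
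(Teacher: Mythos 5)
Your proof is correct and follows exactly the route the paper has in mind: its own proof is the one\mbox{-}liner that the claim ``follows immediately'' from \eqref{hol-a} and the structure of $\mathcal{A}(t)$ in \eqref{abstract operator}--\eqref{domain abstract operator}, and your argument is precisely the fleshed-out version of that observation (the difference $\mathcal{A}(t)-\mathcal{A}(s)$ is $[a_{\epsilon}(t)-a_{\epsilon}(s)]\mathcal{P}$ with $\mathcal{P}\in\mathcal{L}(Y_0,Y_{-1})$ fixed, composed with the uniformly bounded isomorphism $\mathcal{A}^{-1}(\tau)\colon Y_{-1}\to Y_0$). The entrywise mapping properties you check are the right ones, and the uniformity in $\tau$ and $\epsilon$ via \eqref{function a is bounded} is exactly what makes the estimate hold with $\epsilon_0=\beta$.
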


\begin{proof}
Using \eqref{hol-a}, this result follows immediately from $(\ref{abstract operator})$ and $(\ref{domain abstract operator})$.
\end{proof}

The next step is to show the analyticity of the semigroup $\{ e^{-\tau\mathcal{A}(t)}: \tau\geq 0\}$. For that, we will make use of the following result whose  proof can be found in \cite{Liu-Zheng}.

\begin{theorem}\label{analiticidade}
Let $\{ T(\tau)\colon  \tau \geq 0 \}$ be a $C_0$-semigroup of contractions in a Hilbert space $H$ with infinitesimal generator $\mathcal{B}$. Suppose that $i\mathbb{R}\subset \rho(\mathcal{B})$. Then $\{ T(\tau)\colon  \tau \geq 0 \}$ is analytic if, and only if
\[
\limsup\limits_{|\beta| \rightarrow \infty} \Vert \beta(i\beta I - \mathcal{B})^{-1} \Vert_{\mathcal{L} (H)} < \infty.
\]
\end{theorem}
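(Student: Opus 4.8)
The plan is to prove both implications via the classical resolvent characterization of analyticity: a $C_0$-semigroup of contractions $\{T(\tau):\tau\geq 0\}$ with generator $\mathcal{B}$ is analytic if and only if there exist $\delta\in(0,\tfrac{\pi}{2})$ and $M>0$ such that the sector $\Sigma_\delta=\{\lambda\in\mathbb{C}\setminus\{0\}:|\arg\lambda|<\tfrac{\pi}{2}+\delta\}$ is contained in $\rho(\mathcal{B})$ and $\Vert(\lambda I-\mathcal{B})^{-1}\Vert_{\mathcal{L}(H)}\leq M/|\lambda|$ on $\Sigma_\delta$. The whole argument then reduces to relating this sectorial resolvent bound to the single scalar quantity $\limsup_{|\beta|\to\infty}\Vert\beta(i\beta I-\mathcal{B})^{-1}\Vert$. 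For the necessity ($\Rightarrow$) I would simply restrict the sectorial estimate to the imaginary axis: since $T$ is analytic, $i\mathbb{R}\setminus\{0\}\subset\Sigma_\delta\subset\rho(\mathcal{B})$, and taking $\lambda=i\beta$ gives $\Vert\beta(i\beta I-\mathcal{B})^{-1}\Vert\leq M$ for all $\beta\neq 0$, which bounds the $\limsup$ by $M$.

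The substantive direction is sufficiency ($\Leftarrow$), and here I would combine two sources of resolvent control. First, because $\{T(\tau)\}$ consists of contractions, $\mathcal{B}$ is m-dissipative by Lumer--Phillips, so the open right half-plane lies in $\rho(\mathcal{B})$ with $\Vert(\lambda I-\mathcal{B})^{-1}\Vert\leq 1/\mathrm{Re}\,\lambda$; this controls the resolvent for $\mathrm{Re}\,\lambda>0$. Second, I would push the resolvent across the imaginary axis by a Neumann-series perturbation. Writing $\lambda=-\sigma+i\beta$ with $\sigma>0$ and factoring
\[
\lambda I-\mathcal{B}=(i\beta I-\mathcal{B})\bigl[I-\sigma(i\beta I-\mathcal{B})^{-1}\bigr],
\]
the bracketed factor is invertible whenever $\sigma\,\Vert(i\beta I-\mathcal{B})^{-1}\Vert<1$. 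The hypothesis furnishes a constant $C$ with $\Vert(i\beta I-\mathcal{B})^{-1}\Vert\leq C/|\beta|$ for all large $|\beta|$, so for $\sigma<|\beta|/(2C)$ one obtains $\lambda\in\rho(\mathcal{B})$ together with $\Vert(\lambda I-\mathcal{B})^{-1}\Vert\leq 2C/|\beta|$. Since on this region $|\lambda|\leq|\beta|\sqrt{1+1/(4C^2)}$, this is precisely a bound of the form $M/|\lambda|$ on a sector of the left half-plane adjacent to the imaginary axis. Splicing together the right half-plane, the imaginary axis, and this left sector yields $\Sigma_\delta\subset\rho(\mathcal{B})$ with $\Vert(\lambda I-\mathcal{B})^{-1}\Vert\leq M/|\lambda|$ outside a bounded neighbourhood of the origin, which is the desired characterization.

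The main obstacle I anticipate is the bookkeeping near the origin together with the uniformity of constants. The hypothesis only gives the resolvent bound for large $|\beta|$, so I must argue separately that the compact piece of the imaginary axis near $0$ remains in $\rho(\mathcal{B})$ (which is granted by assumption) and, by openness of the resolvent set and continuity of $\lambda\mapsto(\lambda I-\mathcal{B})^{-1}$, that an honest sector neighbourhood of it is too; I then must check that the remaining bounded region does not spoil the sectorial estimate needed to invoke the analyticity criterion. Apart from this, the argument is a clean perturbation estimate, and the only delicate quantitative point is to choose the aperture $\delta$ (equivalently the slope $1/(2C)$) uniformly, so that the left-half-plane sector and the contraction estimate match continuously along the imaginary axis.
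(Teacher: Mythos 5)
The paper does not actually prove this statement: it is quoted as a known result, with its proof deferred to the cited monograph of Liu and Zheng, and then applied to the operators $\mathcal{A}(t)$. So there is no in-paper argument to compare yours against; what you have written is, in essence, the standard proof that the citation points to, and your plan is sound: necessity by restricting a sectorial resolvent estimate to the imaginary axis, and sufficiency by combining the Lumer--Phillips bound $\Vert(\lambda I-\mathcal{B})^{-1}\Vert\le 1/\mathrm{Re}\,\lambda$ on the right half-plane with the Neumann-series factorization $\lambda I-\mathcal{B}=(i\beta I-\mathcal{B})\bigl[I-\sigma(i\beta I-\mathcal{B})^{-1}\bigr]$ to push the hypothesis bound $C/|\beta|$ across the axis, handling the compact piece of $i\mathbb{R}$ near the origin by continuity of the resolvent, and finally invoking the generation theorem for analytic semigroups. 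Two details deserve care. First, in the necessity direction, ``analytic'' by itself only yields the sectorial estimate with a shifted vertex, $\Vert(\lambda I-\mathcal{B})^{-1}\Vert\le M/|\lambda-\omega|$ on $\omega+\Sigma_{\pi/2+\delta}$; the vertex-at-origin form you quote characterizes \emph{bounded} analytic semigroups. This is harmless here: for $|\beta|$ large the point $i\beta$ lies in the shifted sector and $|i\beta-\omega|\ge c\,|\beta|$, so the limsup is still finite, but as written your step silently assumes bounded analyticity. Second, when you splice the regions together, note that the bound $1/\mathrm{Re}\,\lambda$ is \emph{not} of the form $M/|\lambda|$ in the part of the right half-plane adjacent to the imaginary axis; you need the same Neumann-series estimate there as well (your factorization works verbatim with $\sigma<0$), after which the right half-plane far from the axis, the strip around the axis, and the bounded region near the origin together give the full sectorial bound. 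With these two routine repairs, the proof is complete and matches the argument in the cited source.
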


The next lemma shows that $i\mathbb{R} \subset \rho(-\mathcal{A}(t))$ for all $t \in \mathbb{R}$. 

\begin{lemma}\label{An1}
The semigroup $\{ e^{-\tau\mathcal{A}(t)} \colon \tau\geq 0\}$, generated by $-\mathcal{A}(t)$, satisfies
\[
i\mathbb{R} \subset \rho(-\mathcal{A}(t))
\]
for all $t \in \mathbb{R}$.
\end{lemma}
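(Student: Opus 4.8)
The plan is to combine the accretivity of $\mathcal{A}(t)$ established in Proposition~\ref{acretivo} with the fact that $\mathcal{A}(t)$ has compact resolvent. Since $\mathcal{A}^{-1}(t)$ is compact, the spectrum of $\mathcal{A}(t)$ coincides with its point spectrum and consists only of isolated eigenvalues; consequently every point of $i\mathbb{R}$ that fails to be an eigenvalue of $\mathcal{A}(t)$ belongs to $\rho(\mathcal{A}(t))$. Because $i\beta \in \rho(-\mathcal{A}(t))$ if and only if $-i\beta \in \rho(\mathcal{A}(t))$, and $i\mathbb{R}$ is symmetric about the origin, it suffices to prove that $\mathcal{A}(t)$ has no eigenvalue lying on the imaginary axis.

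First I would argue by contradiction, assuming $\mathcal{A}(t)x = i\beta x$ for some $\beta \in \mathbb{R}$ and some nonzero $x = \begin{bmatrix} u & v & w & z\end{bmatrix}^T \in D(\mathcal{A}(t))$. Pairing this identity with $x$ in $Y_0$ and taking real parts, the right-hand side yields $\text{Re}(i\beta\Vert x\Vert_{Y_0}^2) = 0$, while the left-hand side is computed exactly as in \eqref{real part}. Hence $\eta\Vert A^{\frac{1}{4}}v\Vert_X^2 + \eta\Vert A^{\frac{1}{4}}z\Vert_X^2 = 0$, and since $\eta > 0$ and $A^{\frac{1}{4}}$ is injective (as $A$ is positive), this forces $v = 0$ and $z = 0$.

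Next I would read off the first and third rows of the eigenvalue equation $\mathcal{A}(t)x = i\beta x$ using the explicit form \eqref{abstract operator}, namely $-v = i\beta u$ and $-z = i\beta w$. For $\beta \neq 0$, the conclusions $v = 0$ and $z = 0$ immediately give $u = 0$ and $w = 0$, so $x = 0$, contradicting that $x$ is an eigenvector. The remaining case $\beta = 0$ is handled separately: since $\det(\mathcal{A}(t)) = A(A+I)$ we already know $0 \in \rho(\mathcal{A}(t))$, so $0$ is not an eigenvalue either. This exhausts all of $i\mathbb{R}$ and yields $i\mathbb{R} \subset \rho(-\mathcal{A}(t))$ for every $t \in \mathbb{R}$.

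I expect this argument to be essentially routine; the only points requiring care are the reduction of the spectral question to the absence of imaginary eigenvalues, which relies crucially on the compactness of $\mathcal{A}^{-1}(t)$ so that the spectrum is exhausted by eigenvalues, and the separate treatment of $\beta = 0$. Notably, the coupling terms $\pm a_{\epsilon}(t)A^{\frac{1}{2}}$ play no role here: the accretive part already annihilates the two ``velocity'' components $v$ and $z$, after which the kinematic relations $-v = i\beta u$ and $-z = i\beta w$ alone eliminate $u$ and $w$, so one does not need to invoke the second and fourth rows of the system at all.
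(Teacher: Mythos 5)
Your proof is correct and takes essentially the same approach as the paper's: compactness of $\mathcal{A}^{-1}(t)$ reduces the claim to the absence of purely imaginary eigenvalues, the real part identity \eqref{real part} forces $v = z = 0$, and the first and third rows of the eigenvalue equation then give $u = w = 0$, a contradiction. The only immaterial differences are that you write the eigenvalue equation for $\mathcal{A}(t)$ rather than $-\mathcal{A}(t)$ (equivalent since $i\mathbb{R}$ is symmetric) and treat $\beta = 0$ explicitly, which the paper disposes of beforehand via the observation that $0 \in \rho(\mathcal{A}(t))$.
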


\begin{proof}
Arguing by contradiction, suppose that there exists $0\neq\beta\in\mathbb{R}$ such that $i\beta$ is in the spectrum of $-\mathcal{A}(t)$ for some $t \in \mathbb{R}$. Then $i\beta$ must be an eigenvalue of $-\mathcal{A}(t)$, since the operator $\mathcal{A}^{-1}(t)$ is compact. Consequently, there exists
$$
U = \begin{bmatrix} u & v & w & z\end{bmatrix}^T \in D(\mathcal{A}(t)), \ \Vert U\Vert_{Y_0} = 1,
$$
such that $i\beta U - (-\mathcal{A}(t))U = 0$ or, equivalently,
\[
\begin{array}{r}
i\beta u - v = 0,  \\
i\beta v + Au + u + \eta A^{\frac{1}{2}}v + a_{\epsilon}(t)A^{\frac{1}{2}}z = 0,  \\
i\beta w - z = 0,  \\
i\beta z - a_{\epsilon}(t)A^{\frac{1}{2}}v + Aw + \eta A^{\frac{1}{2}}z = 0.
\end{array}
\]

Now, taking the real part of the inner product of $i\beta U + \mathcal{A}(t)U$ with $U$ in $Y_0$, we have
\[
\begin{split}
\langle i\beta U + \mathcal{A}(t)U, U \rangle_{Y_0} = \langle 0, U \rangle_{Y_0} = 0 &\implies i\beta\Vert U\Vert_{Y_0}^2 + \langle\mathcal{A}(t)U, U\rangle_{Y_0} = 0 \\
&\implies \text{Re}(\langle\mathcal{A}(t)U, U\rangle_{Y_0}) = 0 \\
&\implies \eta\Vert A^{\frac{1}{4}}v \Vert_X^2 + \eta\Vert A^{\frac{1}{4}}z \Vert_X^2 = 0 \\
&\implies \Vert A^{\frac{1}{4}}v \Vert_X^2 = \Vert A^{\frac{1}{4}}z \Vert_X^2 = 0 \\
&\implies v = z = 0.
\end{split}
\]

Consequently, $u = w = 0$. Therefore, $U = 0$, which is a contradiction. This proves our claim.
\end{proof}

Now, we are in position to prove that the semigroup generated by $-\mathcal{A}(t)$ is analytic. 

\begin{theorem}
The semigroup $\{ e^{-\tau\mathcal{A}(t)}: \tau\geq 0\}$, generated by $-\mathcal{A}(t)$, is analytic for each $t \in \mathbb{R}$.
\end{theorem}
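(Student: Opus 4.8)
The plan is to apply Theorem \ref{analiticidade} with Hilbert space $H = Y_0$ and generator $\mathcal{B} = -\mathcal{A}(t)$. Two of its hypotheses are already available: since $\mathcal{A}(t)$ is maximal accretive (Proposition \ref{acretivo}), the Lumer--Phillips theorem shows that $-\mathcal{A}(t)$ generates a $C_0$-semigroup of contractions on $Y_0$, and Lemma \ref{An1} provides $i\mathbb{R}\subset\rho(-\mathcal{A}(t))$. Thus it remains only to verify the resolvent estimate
\[
\limsup_{|\beta|\to\infty}\big\|\beta\,(i\beta I + \mathcal{A}(t))^{-1}\big\|_{\mathcal{L}(Y_0)} < \infty,
\]
where I used that $(i\beta I - \mathcal{B})^{-1} = (i\beta I + \mathcal{A}(t))^{-1}$.

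I would prove this bound by contradiction. If it fails there exist $\beta_n\in\mathbb{R}$ with $|\beta_n|\to\infty$ and unit vectors $U_n = [u_n\ v_n\ w_n\ z_n]^T \in D(\mathcal{A}(t))$, $\|U_n\|_{Y_0}=1$, such that $F_n := (i\beta_n I + \mathcal{A}(t))U_n$ satisfies $\|F_n\|_{Y_0} = o(|\beta_n|)$, i.e. $\beta_n^{-1}F_n \to 0$ in $Y_0$. Writing $F_n = [f_n^1\ f_n^2\ f_n^3\ f_n^4]^T$ and $a := a_\epsilon(t)$, the resolvent equation reads
\begin{align*}
& i\beta_n u_n - v_n = f_n^1, \qquad i\beta_n w_n - z_n = f_n^3,\\
& i\beta_n v_n + (A+I)u_n + \eta A^{\frac12}v_n + a A^{\frac12}z_n = f_n^2,\\
& i\beta_n z_n - a A^{\frac12}v_n + A w_n + \eta A^{\frac12}z_n = f_n^4,
\end{align*}
and the objective is to deduce $\|U_n\|_{Y_0}\to 0$, contradicting $\|U_n\|_{Y_0}=1$.

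The first ingredient is the dissipation identity \eqref{real part}: taking the real part of $\langle F_n, U_n\rangle_{Y_0}$ gives $\eta(\|A^{\frac14}v_n\|_X^2 + \|A^{\frac14}z_n\|_X^2) = \mathrm{Re}\,\langle F_n, U_n\rangle_{Y_0}$, which supplies control of the damped quantities $A^{\frac14}v_n$ and $A^{\frac14}z_n$ (the gyroscopic coupling $\pm a A^{\frac12}$ cancels here, exactly as in the proof of Proposition \ref{acretivo}). Next I would eliminate $v_n = i\beta_n u_n - f_n^1$ and $z_n = i\beta_n w_n - f_n^3$ from the first line and substitute into the last two equations, reducing the system to the two coupled ``structurally damped'' elliptic equations
\[
\big(A + I - \beta_n^2 I + i\eta\beta_n A^{\frac12}\big)u_n + i a\beta_n A^{\frac12} w_n = g_n,
\]
together with the analogous equation $(A - \beta_n^2 I + i\eta\beta_n A^{\frac12})w_n - i a\beta_n A^{\frac12} u_n = \tilde g_n$, where $g_n,\tilde g_n$ gather the data terms. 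Testing these against well-chosen multipliers (e.g. $u_n$, $w_n$, and the rescaled $\beta_n^{-1}A^{\frac12}u_n$, $\beta_n^{-1}A^{\frac12}w_n$) and feeding back the damping control from the dissipation identity, I would successively estimate $\|A^{\frac12}u_n\|_X$, $\|v_n\|_X$, $\|A^{\frac12}w_n\|_X$, $\|z_n\|_X$ and show each tends to $0$.

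The main obstacle is that the damping exponent is exactly $\tfrac12$, i.e. the damping operator $\eta A^{\frac12}$ sits precisely at the borderline power (half of the elastic operator $A$) separating analytic from merely differentiable semigroups; consequently the multiplier estimates must be carried out sharply, with careful bookkeeping of every factor of $\beta_n$ (in particular the $\beta_n^2$-amplification produced by the substitution), rather than with crude bounds. A secondary difficulty is the gyroscopic coupling $\pm a A^{\frac12}$: although it cancels in the energy identity, in the component-wise estimates it produces cross terms $\pm i a\beta_n A^{\frac12}$ of the same order as the damping, which must be absorbed using the skew-symmetry of the coupling together with the already-established control of $A^{\frac14}v_n$ and $A^{\frac14}z_n$. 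Once $\|U_n\|_{Y_0}\to 0$ is reached the contradiction closes; the resolvent bound holds, and Theorem \ref{analiticidade} yields analyticity. Since all the constants involved are independent of the fixed $t$, the conclusion holds for every $t\in\mathbb{R}$.
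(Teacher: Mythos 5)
Your setup coincides with the paper's: the Liu--Zheng criterion (Theorem \ref{analiticidade}), contractivity of the semigroup via the maximal accretivity of Proposition \ref{acretivo}, Lemma \ref{An1} for $i\mathbb{R}\subset\rho(-\mathcal{A}(t))$, and then the resolvent bound; your contradiction wrapper is logically equivalent to the paper's direct estimate $|\beta|\,\|U\|_{Y_0}\le C\|F\|_{Y_0}$. The genuine gap is that the only nontrivial step is left as a plan, and the plan as described cannot close. In your normalization ($\|U_n\|_{Y_0}=1$, $\|F_n\|_{Y_0}=o(|\beta_n|)$) every estimate your ingredients produce is of size $\|F_n\|\,\|U_n\|=o(|\beta_n|)$: the dissipation identity gives $\|A^{1/4}v_n\|_X^2+\|A^{1/4}z_n\|_X^2=o(|\beta_n|)$, and testing the reduced equations against $u_n$, $w_n$, $\beta_n^{-1}A^{1/2}u_n$, $\beta_n^{-1}A^{1/2}w_n$ likewise yields at best $o(|\beta_n|)$ bounds for $\|A^{1/2}u_n\|_X^2$, $\|v_n\|_X^2$, etc. Such bounds do not contradict $\|U_n\|_{Y_0}=1$, so your stated goal of showing each $Y_0$-component tends to zero is unreachable from them. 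What closes the argument in the paper is an identity carrying an explicit factor of $\beta$ on $\|U\|_{Y_0}^2$, namely the imaginary part of $\langle(i\beta I+\mathcal{A}(t))U,U\rangle_{Y_0}=\langle F,U\rangle_{Y_0}$, which gives
\[
|\beta|\,\|U\|_{Y_0}^2\le \|F\|_{Y_0}\|U\|_{Y_0}+2\|A^{3/4}u\|_X\|A^{1/4}v\|_X+2a_1\|A^{1/4}z\|_X\|A^{1/4}v\|_X+2\|A^{3/4}w\|_X\|A^{1/4}z\|_X,
\]
and which can only be exploited after one has the \emph{higher-order} bounds $\|A^{3/4}u\|_X^2,\|A^{3/4}w\|_X^2\le C\|F\|_{Y_0}\|U\|_{Y_0}$ (the paper obtains these with the multipliers $\begin{bmatrix} A^{-1/2}v & 0 & 0 & 0\end{bmatrix}^T$ and $\begin{bmatrix} 0 & 0 & 0 & A^{1/2}w\end{bmatrix}^T$). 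Your plan contains neither the $\beta$-weighted identity nor any estimate at the $X^{3/4}$ level, so the missing idea is precisely the mechanism that converts $o(|\beta_n|)$ information into a contradiction.

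A second, related defect is the elimination $v_n=i\beta_nu_n-f_n^1$, $z_n=i\beta_nw_n-f_n^3$: it places $\beta_n$-weighted data ($i\beta_nf_n^1$, plus $A^{1/2}f_n^1$, $A^{1/2}f_n^3$) into the right-hand sides $g_n,\tilde g_n$. Pairings such as $\langle i\beta_nf_n^1,u_n\rangle_X$ are then of size $|\beta_n|\,\|F_n\|_{Y_0}\|U_n\|_{Y_0}$, or, after undoing the substitution, generate quadratic terms of size $\|F_n\|_{Y_0}^2=o(\beta_n^2)$; neither is admissible (in the direct formulation these would only yield $|\beta|\|U\|^2\le C|\beta|\|F\|\|U\|$ or $|\beta|\|U\|^2\le C\|F\|\|U\|+C\|F\|^2$, and neither implies $|\beta|\|U\|\le C\|F\|$). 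The paper avoids this entirely by never eliminating: its multipliers are chosen so that every occurrence of $i\beta u$ or $i\beta w$ is replaced \emph{via the equations} by operator terms, and the data always enter as plain $\|F\|_{Y_0}\|U\|_{Y_0}$. Some of your bad terms do cancel if the bookkeeping is done exactly right, but you neither perform nor indicate that cancellation; acknowledging that the damping exponent $\tfrac12$ is borderline and that "careful bookkeeping" is required is not a substitute for the estimates themselves, which are the entire content of the theorem.
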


\begin{proof} We are going to use Theorem \ref{analiticidade}. Let $t \in \mathbb{R}$. In view of Lemma \ref{An1}, it is enough to prove that there exists a positive constant $C$ such that
\[
|\beta| \Vert U\Vert_{Y_0} \leq C \Vert F\Vert_{Y_0},
\]
for all $F \in Y_0$ and all $\beta \in \mathbb{R}$, where
\[
U = (i\beta I + \mathcal{A}(t))^{-1}F \in D(\mathcal{A}(t)).
\]

In fact, denoting $U = \begin{bmatrix} u & v & w & z\end{bmatrix}^T$ and $F = \begin{bmatrix} f & g & h & k\end{bmatrix}^T$, we can write the resolvent equation
\begin{equation}\label{resolvent equation}
(i\beta I + \mathcal{A}(t))U = F
\end{equation}
in $Y_0$ in terms of its components, obtaining the following scalar equations
\[
i\beta u - v = f,
\]
\begin{equation}\label{eq aux01} 
Au + u + i\beta v + \eta A^{\frac{1}{2}}v + a_{\epsilon}(t)A^{\frac{1}{2}}z = g,
\end{equation}
\begin{equation} \label{eq aux02}
i\beta w - z = h,
\end{equation}
\[
- a_{\epsilon}(t)A^{\frac{1}{2}}v + Aw + i\beta z + \eta A^{\frac{1}{2}}z = k.
\]

Taking the inner product of $(\ref{resolvent equation})$ with $U$ in $Y_0$, we obtain
\begin{equation}\label{resolvent equation2}
i\beta\Vert U\Vert_{Y_0}^2 + \langle\mathcal{A}(t)U, U\rangle_{Y_0} = \langle F, U\rangle_{Y_0}.
\end{equation}

By the proof of Proposition \ref{acretivo}, see $(\ref{real part})$, we get
\[
\text{Re}(\langle \mathcal{A}(t)U, U\rangle_{Y_0}) = \eta\Vert A^{\frac{1}{4}}v \Vert_X^2 + \eta\Vert A^{\frac{1}{4}}z \Vert_X^2 \geq 0.
\]

It follows by the Cauchy-Schwartz inequality that
\[
\begin{split}
\eta\Vert A^{\frac{1}{4}}v \Vert_X^2 + \eta\Vert A^{\frac{1}{4}}z \Vert_X^2 &= |\text{Re}(\langle\mathcal{A}(t)U, U\rangle_{Y_0})| = |\text{Re}(\langle F, U\rangle_{Y_0})| \leq |\langle F, U\rangle_{Y_0}| \leq \Vert F\Vert_{Y_0}\Vert U\Vert_{Y_0}
\end{split}
\]
and, therefore, we obtain
\begin{equation}\label{normas de v e z}
\Vert A^{\frac{1}{4}}v \Vert_X^2 \leq \frac{1}{\eta}\Vert F\Vert_{Y_0}\Vert U\Vert_{Y_0} \ \ \text{and} \ \ \Vert A^{\frac{1}{4}}z \Vert_X^2 \leq \frac{1}{\eta}\Vert F\Vert_{Y_0}\Vert U\Vert_{Y_0}.
\end{equation}

Now, taking the inner product of $(\ref{resolvent equation})$ with $x_1 = \begin{bmatrix} A^{-\frac{1}{2}}v & 0 & 0 & 0\end{bmatrix}^T$ in $Y_0$, it leads to
\[
\begin{split}
\langle (i\beta I + \mathcal{A}(t))U, x_1\rangle_{Y_0} = \langle F, x_1\rangle_{Y_0} &\iff \langle i\beta u - v, A^{-\frac{1}{2}}v\rangle_{X^{\frac{1}{2}}} = \langle f, A^{-\frac{1}{2}}v\rangle_{X^{\frac{1}{2}}} \\
&\iff \langle A^{\frac{1}{2}}u, -i\beta v\rangle_X - \Vert A^{\frac{1}{4}}v\Vert_X^2 = \langle A^{\frac{1}{2}}f, v\rangle_X
\end{split}
\]
and then, using $(\ref{eq aux01})$, we conclude that
\[
\langle A^{\frac{1}{2}}u, Au + u + \eta A^{\frac{1}{2}}v + a_{\epsilon}(t)A^{\frac{1}{2}}z - g\rangle_X - \Vert A^{\frac{1}{4}}v\Vert_X^2 = \langle A^{\frac{1}{2}}f, v\rangle_X.
\]

Thus, from Cauchy-Schwartz and Young inequalities and $(\ref{normas de v e z})$, we obtain
\[
\begin{split}
&\Vert A^{\frac{3}{4}}u\Vert_X^2 \\
&= -\Vert A^{\frac{1}{4}}u\Vert_X^2 - \eta\langle A^{\frac{3}{4}}u, A^{\frac{1}{4}}v\rangle_X - a_{\epsilon}(t)\langle A^{\frac{3}{4}}u, A^{\frac{1}{4}}z\rangle_X + \langle A^{\frac{1}{2}}u, g\rangle_X + \langle A^{\frac{1}{2}}f, v\rangle_X + \Vert A^{\frac{1}{4}}v\Vert_X^2 \\
&\leq \eta\Vert A^{\frac{3}{4}}u\Vert_X\Vert A^{\frac{1}{4}}v\Vert_X + a_1\Vert A^{\frac{3}{4}}u\Vert_X\Vert A^{\frac{1}{4}}z\Vert_X + \Vert A^{\frac{1}{2}}u\Vert_X\Vert g\Vert_X + \Vert A^{\frac{1}{2}}f\Vert_X\Vert v\Vert_X + \Vert A^{\frac{1}{4}}v\Vert_X^2 \\
&\leq \frac{\epsilon_1}{2}\eta^2\Vert A^{\frac{3}{4}}u\Vert_X^2 + \frac{1}{2\epsilon_1}\Vert A^{\frac{1}{4}}v\Vert_X^2 + \frac{\epsilon_2}{2}a_1^2\Vert A^{\frac{3}{4}}u\Vert_X^2 + \frac{1}{2\epsilon_2}\Vert A^{\frac{1}{4}}z\Vert_X^2 + \left(\frac{1}{\eta} + 2\right)\Vert F\Vert_{Y_0}\Vert U\Vert_{Y_0} \\
&\leq \left(\frac{\epsilon_1}{2}\eta^2 + \frac{\epsilon_2}{2}a_1^2\right)\Vert A^{\frac{3}{4}}u\Vert_X^2 + \left(\frac{1}{2\eta\epsilon_1} + \frac{1}{2\eta\epsilon_2} + \frac{1}{\eta} + 2\right)\Vert F\Vert_{Y_0}\Vert U\Vert_{Y_0},
\end{split}
\]
for all $\epsilon_1 > 0$ and $\epsilon_2 > 0$. Now, it is enough to choose $\epsilon_1 = \frac{1}{2\eta^2}$ and $\epsilon_2 = \frac{1}{2a_1^2}$, and so we get
\begin{equation}\label{norma de u}
\Vert A^{\frac{3}{4}}u\Vert_X^2 \leq \left(2\eta + \frac{2(a_1^2 + 1)}{\eta} + 4\right)\Vert F\Vert_{Y_0}\Vert U\Vert_{Y_0}.
\end{equation}

Next, taking the inner product of $(\ref{resolvent equation})$ with $x_2 = \begin{bmatrix} 0 & 0 & 0 & A^{\frac{1}{2}}w\end{bmatrix}^T$, we have
\[
\langle (i\beta I + \mathcal{A}(t))U, x_2\rangle_{Y_0} = \langle F, x_2\rangle_{Y_0} \iff \langle - a_{\epsilon}(t)A^{\frac{1}{2}}v + Aw + i\beta z + \eta A^{\frac{1}{2}}z, A^{\frac{1}{2}}w\rangle_X = \langle k, A^{\frac{1}{2}}w\rangle_X.
\]

That is,
\[
- a_{\epsilon}(t)\langle A^{\frac{1}{2}}v, A^{\frac{1}{2}}w\rangle_X + \Vert A^{\frac{3}{4}}w\Vert_X^2 + \langle A^{\frac{1}{2}}z, -i\beta w\rangle_X + \eta\langle A^{\frac{1}{2}}z, A^{\frac{1}{2}}w\rangle_X = \langle k, A^{\frac{1}{2}}w\rangle_X
\]
and then, using $(\ref{eq aux02})$, we have
\[
\Vert A^{\frac{3}{4}}w\Vert_X^2 = a_{\epsilon}(t)\langle A^{\frac{1}{4}}v, A^{\frac{3}{4}}w\rangle_X - \eta\langle A^{\frac{1}{4}}z, A^{\frac{3}{4}}w\rangle_X + \Vert A^{\frac{1}{4}}z\Vert_X^2 + \langle A^{\frac{1}{2}}z, h\rangle_X + \langle k, A^{\frac{1}{2}}w\rangle_X.
\]

Using again the Cauchy-Schwartz and Young inequalities, and $(\ref{normas de v e z})$, we obtain
\[
\begin{split}
&\Vert A^{\frac{3}{4}}w\Vert_X^2 \\
&\leq a_1\Vert A^{\frac{1}{4}}v\Vert_X\Vert A^{\frac{3}{4}}w\Vert_X + \eta\Vert A^{\frac{1}{4}}z\Vert_X\Vert A^{\frac{3}{4}}w\Vert_X + \Vert A^{\frac{1}{4}}z\Vert_X^2 + \Vert z\Vert_X\Vert A^{\frac{1}{2}}h\Vert_X + \Vert k\Vert_X\Vert A^{\frac{1}{2}}w\Vert_X \\
&\leq \frac{\epsilon_3}{2}a_1^2\Vert A^{\frac{3}{4}}w\Vert_X^2 + \frac{1}{2\epsilon_3}\Vert A^{\frac{1}{4}}v\Vert_X^2 + \frac{\epsilon_4}{2}\eta^2\Vert A^{\frac{3}{4}}w\Vert_X^2 + \frac{1}{2\epsilon_4}\Vert A^{\frac{1}{4}}z\Vert_X^2 + \left(\frac{1}{\eta} + 2\right)\Vert F\Vert_{Y_0}\Vert U\Vert_{Y_0} \\
&\leq \left(\frac{\epsilon_3}{2}a_1^2 + \frac{\epsilon_4}{2}\eta^2\right)\Vert A^{\frac{3}{4}}w\Vert_X^2 + \left(\frac{1}{2\eta\epsilon_3} + \frac{1}{2\eta\epsilon_4} + \frac{1}{\eta} + 2\right)\Vert F\Vert_{Y_0}\Vert U\Vert_{Y_0},
\end{split}
\]
for all $\epsilon_3 > 0$ and $\epsilon_4 > 0$. Choosing $\epsilon_3 = \frac{1}{2a_1^2}$ and $\epsilon_4 = \frac{1}{2\eta^2}$, we get
\begin{equation}\label{norma de w}
\Vert A^{\frac{3}{4}}w\Vert_X^2 \leq \left(2\eta + \frac{2(a_1^2 + 1)}{\eta} + 4\right)\Vert F\Vert_{Y_0}\Vert U\Vert_{Y_0}.
\end{equation}

By \cite[Corollary 1.3.5]{Cholewa}, we have $D((A+I)^{\frac{1}{2}}) = D(A^{\frac{1}{2}})$, consequently,
\[
\langle A^{\frac{1}{2}}u, A^{\frac{1}{2}} v\rangle_{X} =   \langle u, v\rangle_{X^{\frac{1}{2}}}  = \left\langle (A+I)^{\frac{1}{2}}u, (A+I)^{\frac{1}{2}}v \right\rangle_X.
\]

Using this fact and the proof of Proposition \ref{acretivo}, we obtain
\[
\begin{split}
\langle \mathcal{A}(t)U, U\rangle_{Y_0} &= \left\langle A^{\frac{1}{2}}u, A^{\frac{1}{2}}v \right\rangle_X - \left\langle A^{\frac{1}{2}}v, A^{\frac{1}{2}}u \right\rangle_X \\
& + a_{\epsilon}(t)\left(\langle A^{\frac{1}{2}}z, v\rangle_X - \langle v, A^{\frac{1}{2}}z\rangle_X\right) \\
& + \langle Aw, z\rangle_X - \langle z, Aw\rangle_X + \eta\Vert A^{\frac{1}{4}}v \Vert_X^2 + \eta\Vert A^{\frac{1}{4}}z \Vert_X^2,
\end{split}
\]
and, taking the imaginary part, we have
\[
\begin{split}
\text{Im}(\langle \mathcal{A}(t)U, U\rangle_{Y_0}) & =  2\text{Im}(\langle A^{\frac{1}{2}}u, A^{\frac{1}{2}}v\rangle_X) + 2a_{\epsilon}(t)\text{Im}(\langle A^{\frac{1}{4}}z, A^{\frac{1}{4}}v\rangle_X) \\
&+ 2\text{Im}(\langle A^{\frac{3}{4}}w, A^{\frac{1}{4}}z\rangle_X)\\
&  =2\text{Im}(\langle A^{\frac{3}{4}}u, A^{\frac{1}{4}}v\rangle_X) + 2a_{\epsilon}(t)\text{Im}(\langle A^{\frac{1}{4}}z, A^{\frac{1}{4}}v\rangle_X) \\
&+ 2\text{Im}(\langle A^{\frac{3}{4}}w, A^{\frac{1}{4}}z\rangle_X).
\end{split}
\]

With this last equality and taking the imaginary part in $(\ref{resolvent equation2})$, it follows by the Cauchy-Schwartz and Young inequalities that
\[
\begin{split}
&\beta\Vert U\Vert_{Y_0}^2  = \text{Im}(\langle F, U\rangle_{Y_0}) - \text{Im}(\langle\mathcal{A}(t)U, U\rangle_{Y_0}) \\
& \leq \Vert F\Vert_{Y_0}\Vert U\Vert_{Y_0} + 2\Vert A^{\frac{3}{4}}u\Vert_X\Vert A^{\frac{1}{4}}v\Vert_X + 2a_1\Vert A^{\frac{1}{4}}z\Vert_X\Vert A^{\frac{1}{4}}v\Vert_X + 2\Vert A^{\frac{3}{4}}w\Vert_X\Vert A^{\frac{1}{4}}z\Vert_X \\
& \leq \Vert F\Vert_{Y_0}\Vert U\Vert_{Y_0} + \Vert A^{\frac{3}{4}}u\Vert_X^2 + (1 + a_1)\Vert A^{\frac{1}{4}}v\Vert_X^2 + \Vert A^{\frac{3}{4}}w\Vert_X^2 + (a_1 +1)\Vert A^{\frac{1}{4}}z\Vert_X^2
\end{split}
\]
and, using the estimates obtained in $(\ref{normas de v e z})$, $(\ref{norma de u})$ and $(\ref{norma de w})$, we get
\[
\beta\Vert U\Vert_{Y_0}^2 \leq \left( 1 + 2\left(2\eta + \frac{2(a_1^2 + 1)}{\eta} + 4\right) + \frac{2a_1 + 2}{\eta} \right)\Vert F\Vert_{Y_0}\Vert U\Vert_{Y_0},
\]
that is, there exists a positive constant $C$, independent of $\beta$, such that
\[
\beta\Vert (i\beta I + \mathcal{A}(t))^{-1}F \Vert_{Y_0} \leq C\Vert F\Vert_{Y_0}
\]
for all $F \in Y_0$ and all $\beta \in \mathbb{R}$. Since this holds for $\beta \in \mathbb{R}$ arbitrary,
\[
|\beta|\Vert (i\beta I + \mathcal{A}(t))^{-1} \Vert_{\mathcal{L}(Y_0)} \leq C, \quad \text{for all} \quad \beta \in \mathbb{R},
\]
and, therefore, we conclude that
\[
\limsup\limits_{|\beta| \rightarrow +\infty} \Vert\beta (i\beta I + \mathcal{A}(t))^{-1} \Vert_{\mathcal{L}(Y_0)} < \infty.
\]

By Theorem \ref{analiticidade}, the semigroup $\{ e^{-\tau\mathcal{A}(t)}: \tau\geq 0\}$ is analytic.
\end{proof}

\begin{remark}
We have the following description of the fractional power scale for the operator $\mathcal{A}(t)$, given as follows
\[
Y_0 \hookrightarrow Y_{\alpha - 1} \hookrightarrow Y_{-1}, \quad \text{for all} \quad 0 < \alpha < 1,
\]
where
\[
\begin{split}
Y_{\alpha - 1} &= [Y_{-1}, Y_0]_{\alpha} = [ X \times X^{-\frac{1}{2}} \times X \times X^{-\frac{1}{2}}, X^{\frac{1}{2}} \times X \times X^{\frac{1}{2}} \times X ]_{\alpha} \\
&= [ X, X^{\frac{1}{2}} ]_{\alpha} \times [ X^{-\frac{1}{2}}, X ]_{\alpha} \times [ X, X^{\frac{1}{2}} ]_{\alpha} \times [ X^{-\frac{1}{2}}, X ]_{\alpha} \\
&= X^{\frac{\alpha}{2}} \times X^{\frac{\alpha - 1}{2}} \times X^{\frac{\alpha}{2}} \times X^{\frac{\alpha - 1}{2}},
\end{split}
\]
where $[\cdot, \cdot]_{\alpha}$ denotes the complex interpolation functor, see \cite{Triebel}. The first equality follows from Proposition \ref{acretivo} (recall that $0\in \rho(\mathcal{A}(t))$), see \cite[Example 4.7.3 (b)]{Amann} and the others equalities follow from \cite[Proposition 2]{CC}.
\end{remark}

Proposition \ref{F is Lipschitz} gives us sufficient conditions  for $F\colon  Y_0 \to Y_{\alpha - 1}$ to be Lipschitz continuous in bounded subsets of $Y_0$. For a proof, the reader may consult \cite[Corollary 2.7]{Carbone_M_K_R} and \cite[Corollary 3.6]{BCNS1}.

\begin{proposition}\label{F is Lipschitz}
Assume that $1 < \rho < \frac{n + 2(1-\alpha)}{n-2}$, with $\alpha \in (0, 1)$. Then the map $F\colon  Y_0 \to Y_{\alpha - 1}$, defined in $(\ref{nonlinearity F})$, is Lipschitz continuous in bounded subsets of $Y_0$.
\end{proposition}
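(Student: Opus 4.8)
The plan is to reduce the statement to a single scalar Lipschitz estimate and then close it with two Sobolev embeddings and H\"older's inequality. Since the nonlinearity \eqref{nonlinearity F} has the form $F(W) = [0, f^e(u), 0, 0]^T$ and, by the preceding Remark, $Y_{\alpha-1} = X^{\frac{\alpha}{2}} \times X^{\frac{\alpha-1}{2}} \times X^{\frac{\alpha}{2}} \times X^{\frac{\alpha-1}{2}}$, only the second slot carries information. Writing $W = [u, u_t, v, v_t]^T$, with $\|u\|_{X^{\frac{1}{2}}} \leq \|W\|_{Y_0}$, it therefore suffices to prove that the map $u \mapsto f^e(u)$ is Lipschitz continuous on bounded subsets of $X^{\frac{1}{2}}$ when regarded as a map into $X^{\frac{\alpha-1}{2}}$.

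First I would pin down the two embeddings that make the exponents meet. On the domain side, $X^{\frac{1}{2}} = H_0^1(\Omega)$ embeds into $L^{2^*}(\Omega)$ with the critical exponent $2^* = \frac{2n}{n-2}$. On the target side, since $\frac{\alpha-1}{2} < 0$, I use duality: from $X^{\frac{1-\alpha}{2}} = D(A^{\frac{1-\alpha}{2}}) \hookrightarrow L^{p'}(\Omega)$, with $\frac{1}{p'} = \frac{1}{2} - \frac{1-\alpha}{n}$, one obtains by dualization $L^{p}(\Omega) \hookrightarrow (X^{\frac{1-\alpha}{2}})' = X^{\frac{\alpha-1}{2}}$, where $p = \frac{2n}{n+2(1-\alpha)}$. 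Thus the problem is reduced to showing that $f^e$ sends bounded subsets of $L^{2^*}(\Omega)$ Lipschitz-continuously into $L^{p}(\Omega)$.

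For the concluding estimate, the mean value theorem and the growth bound \eqref{Gcondition} give the pointwise inequality $|f(u(x)) - f(v(x))| \leq c\,(1 + |u(x)|^{\rho-1} + |v(x)|^{\rho-1})\,|u(x) - v(x)|$. I would then apply H\"older's inequality with the splitting $\frac{1}{p} = \frac{1}{r} + \frac{1}{2^*}$, where a direct computation gives $r = \frac{n}{2-\alpha}$, to obtain
\[
\|f^e(u) - f^e(v)\|_{L^{p}} \leq c\,\big\|1 + |u|^{\rho-1} + |v|^{\rho-1}\big\|_{L^{r}}\,\|u - v\|_{L^{2^*}}.
\]
Since $\Omega$ is bounded, the constant term lies in every $L^r$, and $\big\||u|^{\rho-1}\big\|_{L^{r}} = \|u\|_{L^{(\rho-1)r}}^{\rho-1}$ is controlled by $\|u\|_{L^{2^*}}$ precisely when $(\rho-1)r \leq 2^*$. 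A short calculation shows that this last inequality is equivalent to $\rho \leq \frac{n+2(1-\alpha)}{n-2} = 2^*/p$, which is guaranteed (strictly) by the hypothesis. Combining this with the embedding $X^{\frac{1}{2}} \hookrightarrow L^{2^*}$, the factor involving $u$ and $v$ stays bounded on any ball of $Y_0$, and one reads off a Lipschitz constant depending only on its radius.

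The heart of the argument is the exponent bookkeeping of the last paragraph: one must verify that the H\"older conjugacy $\frac{1}{p} = \frac{1}{r} + \frac{1}{2^*}$ is compatible with the subcriticality requirement $(\rho-1)r \leq 2^*$, and the two are compatible exactly in the stated range of $\rho$, the value $\rho = 2^*/p$ being the excluded critical borderline. Everything else is a routine application of the Sobolev embedding theorem, duality, and H\"older's inequality, in line with the references \cite{Carbone_M_K_R} and \cite{BCNS1}.
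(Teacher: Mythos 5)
Your proof is correct, and it is worth noting that the paper itself does not prove this proposition at all: it simply defers to \cite[Corollary 2.7]{Carbone_M_K_R} and \cite[Corollary 3.6]{BCNS1}, where the analogous Nemitski\u i-operator estimate is carried out for plate equations. Your argument is essentially the one those references use, now written out self-containedly for this system: reduce to the second component, embed $X^{\frac{1}{2}} \hookrightarrow L^{2^*}$ with $2^* = \frac{2n}{n-2}$, dualize $X^{\frac{1-\alpha}{2}} \hookrightarrow L^{p'}$ to get $L^{p} \hookrightarrow X^{\frac{\alpha-1}{2}}$ with $p = \frac{2n}{n+2(1-\alpha)}$, and close with the mean value theorem, \eqref{Gcondition} and H\"older. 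Your exponent bookkeeping checks out: $\frac{1}{r} = \frac{1}{p} - \frac{1}{2^*} = \frac{2-\alpha}{n}$, and $(\rho-1)r \leq 2^*$ is indeed equivalent to $\rho \leq \frac{n+2(1-\alpha)}{n-2} = 2^*/p$. One small remark: the strict inequality in the hypothesis is not actually needed for this particular estimate --- at the borderline $\rho = 2^*/p$ one has $(\rho-1)r = 2^*$ exactly and the bound $\bigl\| |u|^{\rho-1} \bigr\|_{L^r} = \|u\|_{L^{2^*}}^{\rho-1}$ still closes the argument, which is consistent with the fact that the critical case is treated in \cite{Nascimento}; the strictness matters elsewhere in the paper (e.g.\ for the existence theory), not for Lipschitz continuity on bounded sets.
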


Proposition \ref{F is Lipschitz} and Theorem \ref{abstract existence result} ensure the local well-posedness of \eqref{edp abstrata} in the phase space $Y_0$, and this allows us to establish the following existence result.

\begin{corollary}\label{existence of solutions}
Let $1 < \rho < \frac{n + 2(1-\alpha)}{n-2}$, with $\alpha \in (0, 1)$, 
 $f \in C^1(\mathbb{R})$ be a function satisfying \eqref{dissipativeness}-\eqref{Gcondition},
assume conditions \eqref{function a is bounded}-\eqref{hol-a} hold and let $F\colon  Y_0 \to Y_{\alpha - 1}$ be defined in $(\ref{nonlinearity F})$.
Then given $r>0$, there exists a time $t_0 = t_0(r) > 0$ such that for all $W_0 \in B_{Y_0}(0, r)$, there exists a unique solution $W\colon  [\tau, \tau + t_0] \to Y_0$ of the problem \eqref{edp abstrata} starting in $W_0$. Moreover, such solutions are continuous with respect to the initial data in $B_{Y_0}(0, r)$.
\end{corollary}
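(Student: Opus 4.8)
The plan is to obtain this corollary as a direct application of the abstract local well-posedness result, Theorem \ref{abstract existence result}, once the hypotheses of that theorem have been matched to the present setting. The essential bookkeeping is to select the correct pair of spaces in the fractional power scale $\{Y_{\gamma}\}$ associated with $\mathcal{A}(t)$: I would take $Y_{\alpha-1}$ as the ground space in which the operator family must be uniformly sectorial and uniformly H\"older continuous, and $Y_0$ as the phase space in which the solution is sought. Since the nonlinearity acts as $F\colon Y_0 \to Y_{\alpha-1}$, this matches the abstract requirement $g\colon X^{\alpha}\to X^{\beta}$ under the identification $X^{\beta}=Y_{\alpha-1}$ and $X^{\alpha}=Y_0$. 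Relative to the base $Y_{\alpha-1}$, the phase space $Y_0$ sits at scale level $0-(\alpha-1)=1-\alpha$, and since $\alpha\in(0,1)$ we have $1-\alpha\in(0,1)\subset[0,1)$, so the admissibility condition $\alpha\in[\beta,\beta+1)$ of Theorem \ref{abstract existence result} is satisfied.

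First I would verify that $\{\mathcal{A}(t)\colon t\in\mathbb{R}\}$ is uniformly sectorial in $Y_{\alpha-1}$. This rests on Proposition \ref{acretivo}: each $\mathcal{A}(t)$ is maximal accretive, with an accretivity estimate (see \eqref{real part}) whose constant depends only on the fixed parameter $\eta>0$ and not on $t$ or $\epsilon$. Together with $0\in\rho(\mathcal{A}(t))$ and the analyticity of $\{e^{-\tau\mathcal{A}(t)}\}$ already established, this yields a resolvent bound on the right half-plane uniform in $t$, that is, uniform sectoriality in $Y_0$. Passing this property to the interpolation-extrapolation scale, so that the realization of $\mathcal{A}(t)$ in $Y_{\alpha-1}$ remains uniformly sectorial with domain $Y_{\alpha}$, is routine scale theory (cf. \cite{Amann}).

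Next I would confirm uniform H\"older continuity of the family in $Y_{\alpha-1}$. This was already shown in $Y_{-1}$, and since the $t$-dependence of $\mathcal{A}(t)$ enters only through the coefficient $a_{\epsilon}(t)$ multiplying $A^{\frac{1}{2}}$ in \eqref{abstract operator}, the H\"older estimate \eqref{hol-a}, with constant $C$ and exponent $\beta$ independent of $\epsilon$, propagates to every space of the scale lying between $Y_{-1}$ and $Y_0$, in particular to $Y_{\alpha-1}$. The remaining hypothesis, that $F\colon Y_0\to Y_{\alpha-1}$ is Lipschitz continuous on bounded subsets of $Y_0$, is exactly the content of Proposition \ref{F is Lipschitz}, valid under the stated range $1<\rho<\frac{n+2(1-\alpha)}{n-2}$.

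With all three hypotheses in hand, Theorem \ref{abstract existence result} applies with ground space $Y_{\alpha-1}$ and phase space $Y_0$: for each $r>0$ it produces a uniform existence time $t_0=t_0(r)>0$ and, for every $W_0\in B_{Y_0}(0,r)$, a unique solution $W\colon[\tau,\tau+t_0]\to Y_0$ of \eqref{edp abstrata} starting at $W_0$, depending continuously on the initial data, which is precisely the assertion. The only genuinely delicate point is the transfer of uniform sectoriality and uniform H\"older continuity from the levels $Y_0$ and $Y_{-1}$, where they were proved, down to the intermediate ground space $Y_{\alpha-1}$; once the uniform-in-$t$ constants are tracked through the complex interpolation functor this is standard, and the conclusion follows.
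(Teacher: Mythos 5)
Your proposal is correct and follows essentially the same route as the paper, which obtains the corollary by combining Proposition \ref{F is Lipschitz} with the abstract local well-posedness result (Theorem \ref{abstract existence result}), relying on the uniform sectoriality and uniform H\"older continuity of $\{\mathcal{A}(t)\}$ established earlier in Section \ref{global}. In fact, you are somewhat more explicit than the paper about the identification of spaces ($X^{\beta}=Y_{\alpha-1}$, $X^{\alpha}=Y_0$, with $1-\alpha\in[0,1)$) and about transferring the uniform-in-$t$ estimates to the intermediate space $Y_{\alpha-1}$, a step the paper leaves implicit.
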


Before to present the proof of Theorem \ref{global-sol}, we give an auxiliary result. 

\begin{lemma}\label{Lem_Aux_Int}
Let $f \in C^1(\mathbb{R})$ be a function satisfying \eqref{dissipativeness}-\eqref{Gcondition}. The following conditions hold:
\begin{enumerate}
\item[$(i)$] There is a  constant $k >0$ such that  $|f(s)| \leq |f(0)| + k(|s| + |s|^{\rho})$ for all $s \in \mathbb{R}$. Consequently, $|f(s)| \leq c(1 + |s|^{\rho})$
for all $s \in \mathbb{R}$ and some $c > 0$.

\item[$(ii)$] Given $\delta > 0$, there exists a constant $C_{\delta} > 0$ such that
\[
\int_{\Omega} f(u)u dx \leq C_{\delta} + \delta\Vert u\Vert_X^2
\quad \text{and} \quad \int_{\Omega}\int_0^u f(s)dsdx \leq C_{\delta} + \delta\Vert u\Vert_X^2,
\]
for all $u \in X$.

\item[$(iii)$] Given $r> 0$, there exist constants $C_{r} > 0$ and $C\geq 0$ $($which does not depend on $r)$ such that
\[
\left| \int_{\Omega} f(u)u dx \right| \leq C_{r}\Vert u\Vert_{X^{\frac{1}{2}}}^2 + C \;\; \text{and} \;\;
\left| \int_{\Omega}\int_0^u f(s)ds dx \right| \leq C_{r}\Vert u\Vert_{X^{\frac{1}{2}}}^2 + C
\]
for all $u \in X^{\frac{1}{2}}$ with $\|u\|_{X^{\frac{1}{2}}} \leq r$. If $f(0) = 0$ then the constant $C$ can be chosen zero.
\end{enumerate}
\end{lemma}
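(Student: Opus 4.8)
The three items are standard consequences of the dissipativeness condition \eqref{dissipativeness} and the subcritical growth \eqref{Gcondition}, combined with Sobolev embeddings. I would establish them in the stated order, since each rests on its predecessor.

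For part $(i)$, I would integrate the derivative bound. From \eqref{Gcondition} we have $|f'(s)| \leq c(1 + |s|^{\rho-1})$, so writing $f(s) = f(0) + \int_0^s f'(r)\,dr$ and estimating gives
\[
|f(s)| \leq |f(0)| + c\int_0^{|s|}(1 + r^{\rho-1})\,dr = |f(0)| + c\Bigl(|s| + \tfrac{|s|^{\rho}}{\rho}\Bigr),
\]
which yields the first inequality with $k = c\max\{1, 1/\rho\}$. The bound $|f(s)| \leq c(1+|s|^{\rho})$ follows by absorbing the linear term into the constant using $|s| \leq 1 + |s|^{\rho}$ for $\rho > 1$.

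For part $(ii)$, the key is the dissipativeness condition \eqref{dissipativeness}. Given $\delta > 0$, the $\limsup$ condition provides $M > 0$ such that $f(s)s \leq \delta s^2$ whenever $|s| \geq M$. On the complementary region $\{|s| < M\}$ the continuous function $f(s)s$ is bounded by some constant $C_M$. Splitting $\int_\Omega f(u)u\,dx$ over $\{|u|\geq M\}$ and $\{|u|<M\}$ then gives $\int_\Omega f(u)u\,dx \leq \delta\|u\|_X^2 + C_M|\Omega|$, which is the claimed form with $C_\delta = C_M|\Omega|$. The same splitting applied to $F(u) := \int_0^u f(s)\,ds$—noting that $\frac{d}{ds}F(s) = f(s)$ and $F(s) \leq \frac{\delta}{2}s^2 + \text{const}$ near infinity—handles the primitive integral. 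The expected main obstacle is the second estimate in $(iii)$, so I flag it now.

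For part $(iii)$, I would use the subcritical growth together with the Sobolev embedding $X^{1/2} = H_0^1(\Omega) \hookrightarrow L^{p}(\Omega)$, valid for $p \leq \frac{2n}{n-2}$. From $(i)$ and Hölder's inequality,
\[
\left|\int_\Omega f(u)u\,dx\right| \leq c\int_\Omega(|u| + |u|^{\rho+1})\,dx \leq C\|u\|_{L^2}\,|\Omega|^{1/2} + C\|u\|_{L^{\rho+1}}^{\rho+1}.
\]
The condition $\rho < \frac{n}{n-2}$ ensures $\rho + 1 < \frac{2n}{n-2}$, so the embedding controls $\|u\|_{L^{\rho+1}}^{\rho+1} \leq C\|u\|_{X^{1/2}}^{\rho+1}$. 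Using $\|u\|_{X^{1/2}} \leq r$, I absorb the extra powers: $\|u\|_{X^{1/2}}^{\rho+1} = \|u\|_{X^{1/2}}^{\rho-1}\|u\|_{X^{1/2}}^2 \leq r^{\rho-1}\|u\|_{X^{1/2}}^2$, giving the $r$-dependent constant $C_r$. The main subtlety is tracking which constants may depend on $r$ versus which must be absolute: the additive constant $C$ comes from the $f(0)$ term in $(i)$ and is $r$-independent, vanishing when $f(0)=0$. The primitive estimate is analogous, using $|F(u)| \leq c(|u| + |u|^{\rho+1})$ from integrating $(i)$, and the same Sobolev–Hölder machinery delivers the bound.
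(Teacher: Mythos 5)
Your proposal is correct, and it is actually more self-contained than the paper's own treatment. For items $(i)$ and $(ii)$ the paper simply cites \cite[Lemma 2.4]{Carbone_M_K_R} and the ideas in \cite{Hale}, whereas you supply the underlying arguments (integrating \eqref{Gcondition} to get the growth bound, and the pointwise splitting $f(s)s \leq \delta s^2 + C_M$ coming from \eqref{dissipativeness}); these are precisely the arguments behind those citations, including the primitive estimate $\int_0^s f(t)\,dt \leq \tfrac{\delta}{2}s^2 + \mathrm{const}$. For item $(iii)$ your route differs mildly from the paper's: the paper first applies Cauchy-Schwarz, $\left|\int_\Omega f(u)u\,dx\right| \leq \Vert u\Vert_X \Vert f(u)\Vert_X$, and therefore needs the embedding $H_0^1(\Omega) \hookrightarrow L^{2\rho}(\Omega)$ (i.e. $2\rho \leq \tfrac{2n}{n-2}$), while you bound the integrand pointwise by $|u| + |u|^{\rho+1}$ and only need $H_0^1(\Omega) \hookrightarrow L^{\rho+1}(\Omega)$, a weaker requirement; both are available since $\rho < \tfrac{n}{n-2}$, so each works. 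One small repair is needed in your $(iii)$: your displayed estimate uses the coarse bound $|f(s)| \leq c(1+|s|^{\rho})$, in which $|f(0)|$ has been absorbed into $c$; with that form, converting the linear term $c\int_\Omega |u|\,dx$ into the shape $C_r\Vert u\Vert_{X^{1/2}}^2 + C$ (via Young and Poincar\'e, a step you leave implicit) produces an additive constant proportional to $c^2|\Omega|$, which does \emph{not} vanish when $f(0)=0$. To obtain the lemma's final assertion you must run the same computation with the refined form $|f(s)| \leq |f(0)| + k(|s|+|s|^{\rho})$ that you yourself proved in $(i)$, so that the only additive constant is proportional to $|f(0)|^2|\Omega|$ — exactly as in the paper's proof. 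You state this tracking correctly in prose, but it needs to be reflected in the actual estimate.
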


\begin{proof}
Condition $(i)$ is a consequence of \eqref{Gcondition} and \cite[Lemma 2.4]{Carbone_M_K_R}. Condition $(ii)$ follows by the ideas presented in \cite{Hale} (see page 76).

Let us prove condition $(iii)$. Let $u \in X^{\frac{1}{2}}$. Using the H\"older's inequality, the Poincar\'e inequality $\Vert u\Vert_X^2 \leq \lambda_1^{-1} \Vert u\Vert_{X^{\frac{1}{2}}}^2$ $(\lambda_1 > 0$ is the first eigenvalue of the negative Laplacian operator with homogeneous Dirichlet boundary condition$)$ and item $(i)$, we have
\begin{equation*}
\begin{split}
\left| \int_{\Omega} f(u)udx \right| & \leq \left( \int_{\Omega}|u|^2 dx\right)^{\frac{1}{2}} \left( \int_{\Omega}|f(u)|^2 dx\right)^{\frac{1}{2}} \\
& \leq  \kappa_0\Vert u\Vert_X \left( \int_{\Omega}\left(|f(0)|^2 + |u|^2 + |u|^{2\rho}\right) dx\right)^{\frac{1}{2}} \\
& \leq   \kappa_{1}\Vert u\Vert_{X}\left(|f(0)||\Omega|^{\frac{1}{2}} + \|u\|_X + \|u\|_{L^{2\rho}(\Omega)}^{\rho} \right) \\
& \leq         \frac{\kappa_{1}}{2}\Vert u\Vert_{X}^2 + \frac{\kappa_{1}}{2}\left(|f(0)||\Omega|^{\frac{1}{2}} + \|u\|_X + \|u\|_{L^{2\rho}(\Omega)}^{\rho} \right)^2 \\
&\leq  \kappa_{2}\left(|f(0)|^2|\Omega| + \|u\|_X^2 +  \Vert u \Vert_{L^{2\rho}(\Omega)}^{2\rho}  \right)\\
& \leq \kappa_{2}\left(|f(0)|^2|\Omega| + \lambda_1^{-1}\|u\|_{X^{\frac{1}{2}}}^2 +  \Vert u \Vert_{L^{2\rho}(\Omega)}^{2\rho}  \right) 
\end{split}
\end{equation*}
with $\kappa_{2} > 0$ being a constant. Thanks to our assumption on the exponent $\rho$, we have $2\rho < \frac{2n}{n-2}$ and, moreover, since we know that the embedding $H^1(\Omega) \hookrightarrow L^p(\Omega)$ holds if and only if $p\leq\frac{2n}{n-2}$, it follows that $H^1(\Omega) \hookrightarrow L^{2\rho}(\Omega)$. Thus, there exists $\kappa_{3} > 0$ such that $\Vert u \Vert_{L^{2\rho}(\Omega)} \leq \kappa_{3} \Vert u\Vert_{X^{\frac{1}{2}}}$ and, hence,
$$
\left| \int_{\Omega} f(u)udx \right| \leq  \kappa_{2}\left(|f(0)|^2|\Omega| + \lambda_1^{-1}\|u\|_{X^{\frac{1}{2}}}^2 +   \kappa_{3}^{2\rho}\Vert u \Vert_{X^{\frac{1}{2}}}^{2\rho} \right). 
$$
Now, given $r > 0$, if $\Vert u\Vert_{X^{\frac{1}{2}}} \leq r$, then we get
\begin{equation}\label{3rd integral assertion}
\left| \int_{\Omega} f(u)udx \right| \leq \kappa_{2}(\lambda^{-1} + \kappa_{3}^{2\rho} r^{2\rho-2}) \Vert u\Vert_{X^{\frac{1}{2}}}^2 + \kappa_{2}|f(0)|^2|\Omega|.
\end{equation}

Next, we show the other inequality. At first, note that
\[
\left|\int_0^s f(\theta)d\theta\right| \leq |f(0)||s| + k\left(\frac{|s|^2}{2} + \frac{|s|^{\rho +1}}{\rho +1}  \right) \quad \text{for all} \quad s \in \mathbb{R}.
\]
Now, let $u \in X^{\frac{1}{2}}$. Using  the Poincar\'e inequality $\Vert u\Vert_X^2 \leq \lambda_1^{-1} \Vert u\Vert_{X^{\frac{1}{2}}}^2$, we obtain

\begin{equation*}
\begin{split}
\left| \int_{\Omega} \int_0^u f(s) dsdx\right| & \leq \int_{\Omega} \left[\frac{|f(0)|^2}{2} +\frac{|u|^2}{2} + k\left(\frac{|u|^2}{2} +  \frac{|u|^{\rho +1}}{\rho +1}  \right)\right]dx \\
&\leq  \kappa_{4}\left(|f(0)|^2|\Omega| + \|u\|_X^2 +  \Vert u \Vert_{L^{\rho +1}(\Omega)}^{\rho +1} \right) \\
& \leq \kappa_{4}\left(|f(0)|^2|\Omega| + \lambda^{-1}\|u\|_{X^{\frac{1}{2}}}^2 +  \Vert u \Vert_{L^{\rho +1}(\Omega)}^{\rho +1} \right)\\
& \leq \kappa_{5}\left(|f(0)|^2|\Omega| + \|u\|_{X^{\frac{1}{2}}}^2 +  \Vert u \Vert_{L^{\rho +1}(\Omega)}^{\rho +1} \right),
\end{split}
\end{equation*}
with $\kappa_{5} > 0$ being a constant. Since $1<\rho<\frac{n}{n-2}$, with $n\geq 3$, we have $2 < \rho + 1 < \frac{2n - 2}{n-2} < \frac{2n}{n-2},$ which ensures that $H^1(\Omega) \hookrightarrow L^{\rho + 1}(\Omega)$. Thus, there exists a constant $\kappa_{6} > 0$ such that $
\Vert u \Vert_{L^{\rho +1}(\Omega)} \leq \kappa_{6} \Vert u\Vert_{X^{\frac{1}{2}}}
$ and, hence,
\begin{equation*}
\begin{split}
\left| \int_{\Omega} \int_0^u f(s) dsdx\right| &\leq   \kappa_{5}\left(|f(0)|^2|\Omega| + \|u\|_{X^{\frac{1}{2}}}^2 + \kappa_{6}^{\rho+1} \Vert u \Vert_{X^{\frac{1}{2}}}^{\rho +1}\right).
\end{split}
\end{equation*}
Now, for a given $r > 0$, if $\Vert u\Vert_{X^{\frac{1}{2}}} \leq r$, then we get
\begin{equation}\label{4th integral assertion}
\left| \int_{\Omega} \int_0^u f(s) dsdx\right| \leq  \kappa_{5}|f(0)|^2|\Omega| + \kappa_{5}\left(1 + \kappa_{6}^{\rho+1}r^{\rho -1}\right)\Vert u \Vert_{X^{\frac{1}{2}}}^2. 
\end{equation}

Therefore, we conclude that, for all $r>0$ given, and for all $u\in X^{\frac{1}{2}}$ with $\|u\|_{X^{\frac{1}{2}}} \leq r$, taking
\[
C_r = \max\left\{\kappa_{2}(\lambda^{-1} + \kappa_{3}^{2\rho} r^{2\rho-2}),   \kappa_{5}\left(1 + \kappa_{6}^{\rho+1}r^{\rho -1}\right)\right\} > 0 \;\; \text{and} \;\;
C = |\Omega||f(0)|^2\max\{\kappa_{2}, \kappa_{5}\}
\]
it follows by \eqref{3rd integral assertion} and \eqref{4th integral assertion} that
\[
\left| \int_{\Omega} f(u)u dx \right| \leq C_{r}\Vert u\Vert_{X^{\frac{1}{2}}}^2 + C\quad \text{and} \quad
\left| \int_{\Omega}\int_0^u f(s)ds dx \right| \leq C_{r}\Vert u\Vert_{X^{\frac{1}{2}}}^2 + C.
\]

\end{proof}

\medskip

\noindent {\bf Proof of Theorem \ref{global-sol}:} 
By Corollary \ref{existence of solutions}, the problem \eqref{edp01}-\eqref{cond01} has a local solution $(u(t), u_t(t), v(t), v_t(t))$ in $Y_0$
defined on some interval $[\tau, \tau + t_0]$. Consider the original system \eqref{edp01}. Multiplying the first equation in \eqref{edp01} by $u_t$, and the second by $v_t$, we obtain
\begin{equation}\label{edp03} 
	\begin{split}
		&\frac{1}{2}\frac{d}{dt}\int_{\Omega}|u_t|^2dx + \frac{1}{2}\frac{d}{dt}\int_{\Omega}|\nabla u|^2dx + \frac{1}{2}\frac{d}{dt}\int_{\Omega}|u|^2dx + \eta\Vert (-\Delta)^{\frac{1}{4}}u_t\Vert_X^2 \\
		&+ a_{\epsilon}(t)\langle (-\Delta)^{\frac{1}{2}}v_t, u_t\rangle_X = \frac{d}{dt}\int_{\Omega}\int_0^u f(s)dsdx,
	\end{split}
\end{equation}
and
\begin{equation}\label{edp04}
	\frac{1}{2}\frac{d}{dt}\int_{\Omega}|v_t|^2dx + \frac{1}{2}\frac{d}{dt}\int_{\Omega}|\nabla v|^2dx + \eta\Vert (-\Delta)^{\frac{1}{4}}v_t\Vert_X^2 - a_{\epsilon}(t)\langle (-\Delta)^{\frac{1}{2}}u_t, v_t\rangle_X = 0, 
\end{equation}
for all $\tau < t \leq \tau + t_0$. Combining $(\ref{edp03})$ and $(\ref{edp04})$, we get
\begin{equation}\label{derivada da energia}
	\frac{d}{dt} \mathcal{E}(t) = -\eta\Vert (-\Delta)^{\frac{1}{4}}u_t\Vert_X^2 - \eta\Vert (-\Delta)^{\frac{1}{4}}v_t\Vert_X^2
\end{equation}
for all $\tau < t \leq \tau + t_0$, where
\begin{equation}\label{funcional de energia}
	\begin{split}
		\mathcal{E}(t) & = \frac{1}{2}\Vert u(t)\Vert_{X^{\frac{1}{2}}}^2 + \frac{1}{2}\Vert u(t)\Vert_X^2 + \frac{1}{2}\Vert u_t(t)\Vert_X^2 + \frac{1}{2}\Vert v(t)\Vert_{X^{\frac{1}{2}}}^2 + \frac{1}{2}\Vert v_t(t)\Vert_X^2 \\
		&\quad - \int_{\Omega}\int_0^u f(s) dsdx
	\end{split}
\end{equation}
is the total energy associated with the solution $(u(t), u_t(t), v(t), v_t(t))$ of the problem \eqref{edp01}-\eqref{cond01} in $Y_0$. The identity \eqref{derivada da energia} means that the map $t \mapsto \mathcal{E}(t)$ is monotone decreasing along solutions. Moreover, using the property $\mathcal{E}(t) \leq \mathcal{E}(\tau)$ for all $\tau \leq t \leq \tau + t_0$, we can obtain a priori estimate of the solution $(u(t),u_t(t), v(t),v_t(t))$ in $Y_0$. In fact, given $\delta > 0$, it follows by Lemma \ref{Lem_Aux_Int}, item $(ii)$, that there is $C_{\delta} > 0$ such that 
\[
\int_{\Omega}\int_0^u f(s)dsdx \leq  C_{\delta} + \delta\Vert u\Vert_X^2.
\]
Thus, for all $\tau < t \leq \tau + t_0$, we have
\[
\begin{split}
&\Vert u\Vert_{X^{\frac{1}{2}}}^2 + \Vert u_t\Vert_X^2 + \Vert v\Vert_{X^{\frac{1}{2}}}^2 + \Vert v_t\Vert_X^2  \leq \Vert u\Vert_{X^{\frac{1}{2}}}^2 + \Vert u\Vert_X^2 + \Vert u_t\Vert_X^2 + \Vert v\Vert_{X^{\frac{1}{2}}}^2 + \Vert v_t\Vert_X^2 \\
&= 2\mathcal{E}(t) + 2\int_{\Omega}\int_0^u f(s)dsdx \leq 2\mathcal{E}(\tau) + 2(\delta\Vert u\Vert_X^2 + C_{\delta})\\ 
&\leq 2(\mathcal{E}(\tau) + C_{\delta}) + 2\delta\lambda_1^{-1}\Vert u\Vert_{X^{\frac{1}{2}}}^2 \\
&\leq 2(\mathcal{E}(\tau) + C_{\delta}) + 2\delta\lambda_1^{-1}(\Vert u\Vert_{X^{\frac{1}{2}}}^2 + \Vert u_t\Vert_X^2 + \Vert v\Vert_{X^{\frac{1}{2}}}^2 + \Vert v_t\Vert_X^2), 
\end{split}
\]
where we have used the Poincar\'e inequality (recall that $\lambda_1 > 0$ is the first eigenvalue of the negative Laplacian operator with homogeneous Dirichlet boundary condition).

Now, choosing $\delta = \frac{\lambda_1}{4}$, we get
\[
\Vert u\Vert_{X^{\frac{1}{2}}}^2 + \Vert u_t\Vert_X^2 + \Vert v\Vert_{X^{\frac{1}{2}}}^2 + \Vert v_t\Vert_X^2 \leq 4\left(\mathcal{E}(\tau) + C_{\frac{\lambda_1}{4}}\right),
\]
that is,
\[
\Vert (u(t),u_t(t), v(t), v_t(t)) \Vert_{Y_0}^2 \leq 4\left(\mathcal{E}(\tau) + C_{\frac{\lambda_1}{4}}\right).
\]
This ensures that the problem $(\ref{edp01})-(\ref{cond01})$ has a global solution $W(t)$ in $Y_0$, which proves the result. \qed

\medskip

Since the problem $(\ref{edp01})-(\ref{cond01})$ has a global solution $W(t)$ in $Y_0$, we can define
an  evolution process $\{S(t, \tau): t\geq\tau\in\mathbb{R}\}$ in $Y_0$  by
\begin{equation}\label{evolution process of the problem}
S(t, \tau)W_0 = W(t), \quad  t\geq\tau\in\mathbb{R}.
\end{equation}

By  \cite{Nascimento}, we have
\begin{equation}\label{process_formulation}
S(t, \tau)W_0 = L(t, \tau)W_0 + U(t, \tau)W_0, \quad t\geq\tau\in\mathbb{R},
\end{equation}
where $\{L(t, \tau): t\geq\tau\in\mathbb{R}\}$ is the linear evolution process in $Y_0$ associated with the homogeneous problem
\begin{equation}\label{homogeneous pde} 
\begin{cases}
W_t + \mathcal{A}(t)W = 0, \ t > \tau, \\
W(\tau) = W_0, \ \tau \in \mathbb{R},
\end{cases}
\end{equation}
and
\begin{equation}\label{parte compacta do processo}
U(t, \tau)W_0 = \int_{\tau}^t L(t, s)F(S(s, \tau)W_0)ds.
\end{equation}

\section{Existence of the pullback attractor}\label{PullA}

In this section, we prove the existence of the pullback attractor of the problem \eqref{edp01}-\eqref{cond01}. To this end, we need to make a modification on the energy functional. More precisely, for $\gamma_1, \gamma_2 \in \mathbb{R}_+$, let us define $L_{\gamma_1, \gamma_2}\colon  Y_0 \to \mathbb{R}$ by the map
\begin{equation}\label{funcional de energia modificado}
	\begin{split}
		L_{\gamma_1, \gamma_2}(\phi, \varphi, \psi, \Phi) &=  \frac{1}{2}\Vert\phi\Vert_{X^{\frac{1}{2}}}^2 + \frac{1}{2}\Vert\phi\Vert_X^2 + \frac{1}{2}\Vert\varphi\Vert_X^2 + \frac{1}{2}\Vert\psi\Vert_{X^{\frac{1}{2}}}^2 + \frac{1}{2}\Vert\Phi\Vert_X^2 \\
		&+ \gamma_1\langle\phi,\varphi\rangle_X + \gamma_2\langle\psi,\Phi\rangle_X - \int_{\Omega}\int_0^{\phi}f(s)dsdx.
	\end{split}
\end{equation}

We start by noting that if
\[
\gamma_i < \frac{1}{2} \ \ \text{and} \ \ \frac{\gamma_i}{2}\lambda_1^{-1} < \frac{1}{4}, \ i = 1, 2,
\]
then
\begin{equation}\label{desig principal}
	\frac{1}{4}\Vert (\phi, \varphi, \psi, \Phi)\Vert_{Y_0}^2  \leq L_{\gamma_1, \gamma_2}(\phi, \varphi, \psi, \Phi) + \int_{\Omega}\int_0^{\phi} f(s)dsdx
\end{equation}
\[
\leq \frac{3}{4}(1 + \lambda_1^{-1})\Vert (\phi, \varphi, \psi, \Phi)\Vert_{Y_0}^2.
\]

Indeed, using the Cauchy-Schwartz and Young inequalities, we obtain
\[
\begin{split}
|\gamma_1\langle\phi,\varphi\rangle_X + \gamma_2\langle\psi,\Phi\rangle_X| & \leq \gamma_1\Vert\phi\Vert_X\Vert\varphi\Vert_X + \gamma_2\Vert\psi\Vert_X\Vert\Phi\Vert_X \\
& \leq \frac{\gamma_1}{2}(\Vert\phi\Vert_X^2 + \Vert\varphi\Vert_X^2) + \frac{\gamma_2}{2}(\Vert\psi\Vert_X^2 + \Vert\Phi\Vert_X^2) \\
& \leq \frac{\gamma_1}{2}\lambda_1^{-1}\Vert\phi\Vert_{X^{\frac{1}{2}}}^2 + \frac{\gamma_1}{2}\Vert\varphi\Vert_X^2 + \frac{\gamma_2}{2}\lambda_1^{-1}\Vert\psi\Vert_{X^{\frac{1}{2}}}^2 + \frac{\gamma_2}{2}\Vert\Phi\Vert_X^2 \\
& \leq \frac{1}{4}\Vert (\phi, \varphi, \psi, \Phi)\Vert_{Y_0}^2,
\end{split}
\]
which leads to
\begin{equation}\label{Eqneeded}
	\frac{1}{4}\Vert (\phi, \varphi, \psi, \Phi)\Vert_{Y_0}^2 \leq \frac{1}{2}\Vert (\phi, \varphi, \psi, \Phi)\Vert_{Y_0}^2
	+ \gamma_1\langle\phi,\varphi\rangle_X + \gamma_2\langle\psi,\Phi\rangle_X \leq  \frac{3}{4}\Vert (\phi, \varphi, \psi, \Phi)\Vert_{Y_0}^2.
\end{equation}

Consequently,
\[
\begin{split}
\frac{1}{4}\Vert (\phi, \varphi, \psi, \Phi)\Vert_{Y_0}^2 &\leq L_{\gamma_1, \gamma_2}(\phi, \varphi, \psi, \Phi) + \int_{\Omega}\int_0^{\phi} f(s)dsdx\leq  \frac{3}{4}\Vert (\phi, \varphi, \psi, \Phi)\Vert_{Y_0}^2 + \frac{1}{2}\Vert\phi\Vert_X^2.
\end{split}
\]

But since $\Vert\phi\Vert_X^2 \leq  \lambda_1^{-1}\Vert\phi\Vert_{X^{\frac{1}{2}}}^2,$ we have
\begin{equation}\label{Eqneeded1}
	\frac{3}{4}\Vert (\phi, \varphi, \psi, \Phi)\Vert_{Y_0}^2 + \frac{1}{2}\Vert\phi\Vert_X^2 \leq \frac{3(1 + \lambda_1^{-1})}{4} \Vert (\phi, \varphi, \psi, \Phi) \Vert_{Y_0}^2,
\end{equation}
and the claim is proved.

\begin{theorem}\label{the solution is exponentially dominated} There exists $R > 0$ such that for any bounded subset $B \subset Y_0$ one can find $t_0(B) > 0$ satisfying
\[
\Vert (u, u_t, v, v_t) \Vert_{Y_0}^2 \leq R \quad \text{for all} \quad t \geq  \tau + t_0(B).
\] 
In particular, the evolution process $\{S(t, \tau)\colon  t\geq\tau\in\mathbb{R}\}$ defined in \eqref{evolution process of the problem} is pullback strongly bounded dissipative.
\end{theorem}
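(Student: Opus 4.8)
The plan is to use the modified energy $L_{\gamma_1,\gamma_2}$ from \eqref{funcional de energia modificado} as a Lyapunov functional and to show that it is strictly dissipative along trajectories. Writing $L(t)=L_{\gamma_1,\gamma_2}(u(t),u_t(t),v(t),v_t(t))$ for a (global, by Theorem \ref{global-sol}) solution $W=(u,u_t,v,v_t)$, I observe that $L$ differs from the energy $\mathcal{E}$ in \eqref{funcional de energia} only through the cross terms $\gamma_1\langle u,u_t\rangle_X+\gamma_2\langle v,v_t\rangle_X$. So I would start from the energy identity \eqref{derivada da energia}, namely $\frac{d}{dt}\mathcal{E}(t)=-\eta\Vert A^{\frac{1}{4}}u_t\Vert_X^2-\eta\Vert A^{\frac{1}{4}}v_t\Vert_X^2$, and then differentiate the two cross terms, substituting $u_{tt}$ and $v_{tt}$ from the system \eqref{edp01}.

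Carrying out $\frac{d}{dt}\langle u,u_t\rangle_X=\Vert u_t\Vert_X^2+\langle u,u_{tt}\rangle_X$ and the analogue for $v$, the second-order terms produce the good negative contributions $-\gamma_1\Vert u\Vert_{X^{\frac{1}{2}}}^2-\gamma_1\Vert u\Vert_X^2-\gamma_2\Vert v\Vert_{X^{\frac{1}{2}}}^2$, together with $+\gamma_1\Vert u_t\Vert_X^2+\gamma_2\Vert v_t\Vert_X^2$, the nonlinear term $\gamma_1\langle u,f(u)\rangle_X$, and several indefinite cross terms weighted by $\gamma_1\eta$, $\gamma_2\eta$ and $\gamma_i a_\epsilon(t)$. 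As in Proposition \ref{acretivo}, the skew-symmetric coupling between the two velocities cancels exactly. The remaining indefinite terms involve $\Vert A^{\frac{1}{4}}\cdot\Vert_X$ of positions and velocities, and I would control them by Young's inequality: the velocity parts are absorbed into the damping $\eta\Vert A^{\frac{1}{4}}u_t\Vert_X^2$, $\eta\Vert A^{\frac{1}{4}}v_t\Vert_X^2$ (using $\Vert A^{\frac{1}{4}}u_t\Vert_X^2\geq\lambda_1^{\frac{1}{2}}\Vert u_t\Vert_X^2$, which also dominates $\gamma_1\Vert u_t\Vert_X^2$ for $\gamma_1$ small), while the position parts $\Vert A^{\frac{1}{4}}u\Vert_X^2$ are absorbed into $-\gamma_1\Vert u\Vert_{X^{\frac{1}{2}}}^2-\gamma_1\Vert u\Vert_X^2$ by interpolation. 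The nonlinear term is estimated by Lemma \ref{Lem_Aux_Int}(ii), $\langle u,f(u)\rangle_X\leq\delta\Vert u\Vert_X^2+C_\delta$. Choosing $\gamma_1,\gamma_2$ small enough (and within the constraints $\gamma_i<\frac{1}{2}$, $\frac{\gamma_i}{2}\lambda_1^{-1}<\frac{1}{4}$ preceding \eqref{desig principal}) yields, with uniform constants $c>0$ and $C\geq 0$,
\[
\frac{d}{dt}L(t)\leq -c\,\Vert W(t)\Vert_{Y_0}^2+C.
\]

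To convert this into a uniform absorbing estimate I would exploit the two-sided comparison between $L$ and the norm. From \eqref{desig principal} together with $\int_\Omega\int_0^{u}f\,ds\,dx\leq C_\delta+\delta\lambda_1^{-1}\Vert W\Vert_{Y_0}^2$ (Lemma \ref{Lem_Aux_Int}(ii), $\delta$ fixed small) one gets $\Vert W\Vert_{Y_0}^2\leq 8\big(L+C_\delta\big)$, so that bounding $L$ bounds the norm; conversely, Lemma \ref{Lem_Aux_Int}(i) and the embedding $H^1(\Omega)\hookrightarrow L^{\rho+1}(\Omega)$ give a uniform polynomial bound $L\leq C_1+C_2\Vert W\Vert_{Y_0}^2+C_3\Vert W\Vert_{Y_0}^{\rho+1}$. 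Hence there is a uniform level $L^{*}$ such that $L\geq L^{*}$ forces $\Vert W\Vert_{Y_0}^2$ large enough to make $\frac{d}{dt}L\leq -c<0$; consequently $L$ decreases at rate at least $c$ until it is trapped below $L^{*}$, within a time $t_0(B)$ that depends only on the bounded initial value $L(\tau)$ (hence only on $B$). For $t\geq\tau+t_0(B)$ this gives $\Vert W(t)\Vert_{Y_0}^2\leq 8(L^{*}+C_\delta)=:R$, with $R$ independent of $B$ and of $\tau$, which is precisely pullback strongly bounded dissipativity.

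The main obstacle is the sign-definite derivative estimate of the middle paragraph: one must balance the indefinite coupling and cross terms against the damping with the correct smallness of $\gamma_1,\gamma_2$ (compatible with \eqref{desig principal}) and with carefully chosen Young parameters. A subtler point, which I would flag explicitly, is that the potential $\int_\Omega\int_0^{u}f\,ds\,dx$ may grow faster than quadratically, so $L$ need not be comparable to $\Vert W\Vert_{Y_0}^2$ from above by a constant; this rules out a naive linear inequality $\frac{d}{dt}L\leq -cL+C$ and is exactly why the conclusion is obtained through the sublevel-set trapping argument based on the polynomial comparison between $L$ and $\Vert W\Vert_{Y_0}^2$ rather than through a direct Gronwall estimate.
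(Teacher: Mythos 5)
Your proposal is correct, and its core — the differential inequality $\frac{d}{dt}L_{\gamma_1,\gamma_2}\leq -C_1\Vert W\Vert_{Y_0}^2+C_2$ with constants independent of the bounded set, together with the lower comparison $L_{\gamma_1,\gamma_2}\geq \frac{1}{8}\Vert W\Vert_{Y_0}^2-K$ from Lemma \ref{Lem_Aux_Int}(ii) — is exactly the paper's pair of estimates \eqref{derivada funcional norma} and \eqref{Ch2eqdecayL}. Where you genuinely diverge is in the endgame that converts these into a uniform absorbing ball. You introduce the global polynomial upper comparison $L_{\gamma_1,\gamma_2}\leq C_1'+C_2'\Vert W\Vert_{Y_0}^2+C_3'\Vert W\Vert_{Y_0}^{\rho+1}$ (valid via Lemma \ref{Lem_Aux_Int}(i) and $H^1(\Omega)\hookrightarrow L^{\rho+1}(\Omega)$) and run a sublevel-set trapping argument on $L_{\gamma_1,\gamma_2}$ itself: above a fixed level $L^{*}$ the derivative is $\leq -c<0$, so the functional enters $\{L_{\gamma_1,\gamma_2}\leq L^{*}\}$ within a time controlled by its initial value (hence by $B$) and, by the sign of the derivative on the level set, can never exit; then \eqref{Ch2eqdecayL} gives $\Vert W\Vert_{Y_0}^2\leq 8(L^{*}+C_{\delta})=:R$. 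The paper instead works at the level of the norm: it introduces $\ell_r=\sup\{\Vert W(t)\Vert_{Y_0}^2\colon t\geq\tau,\ \Vert W(\tau)\Vert_{Y_0}^2\leq r\}$, whose finiteness rests on the monotone decay of the original energy $\mathcal{E}$ (proof of Theorem \ref{global-sol}) and Lemma \ref{Lem_Aux_Int}(iii); it then splits into the case where the trajectory never reaches squared norm $\frac{C_2+1}{C_1}$ (so $L_{\gamma_1,\gamma_2}$ becomes nonpositive in finite time and $\Vert W\Vert_{Y_0}^2\leq 8K$) and the case where it first does so at a time $t_u$ (after which $\Vert W\Vert_{Y_0}^2\leq \ell_{\frac{C_2+1}{C_1}}$), and it needs a separate contradiction argument (the set $B^u$) to make these entry times uniform over $B$; the radius is $R=\max\{8K,\ell_{\frac{C_2+1}{C_1}}\}$. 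Your route buys a shorter and more self-contained conclusion: the level-set invariance makes the uniformity of $t_0(B)$ automatic and dispenses with both $\ell_r$ and the contradiction step, at the modest price of stating the upper polynomial bound for $L_{\gamma_1,\gamma_2}$ globally rather than only on balls. Your closing observation — that the superquadratic potential prevents $L_{\gamma_1,\gamma_2}\lesssim\Vert W\Vert_{Y_0}^2$ and hence a direct Gronwall inequality with uniform constants — identifies precisely the obstruction the paper also faces; indeed its Gronwall-type estimate in Theorem \ref{the solution is exponentially dominatedP2} carries constants depending on $B$ for exactly this reason.
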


\begin{proof}
At first, note that we can differentiate the expression $(\ref{funcional de energia modificado})$ along the solution $W(t) = (u(t), u_t(t), v(t),v_t(t))$ and, using $(\ref{derivada da energia})$ and $(\ref{funcional de energia})$, we get
\[
\begin{aligned}
&\frac{d}{dt}L_{\gamma_1, \gamma_2}(u, u_t, v, v_t) = \frac{d}{dt} \mathcal{E}(t) + \gamma_1\langle u_t, u_t\rangle_X + \gamma_1\langle u, u_{tt}\rangle_X + \gamma_2\langle v_t, v_t\rangle_X + \gamma_2\langle v, v_{tt}\rangle_X \\
&= -\eta\Vert A^{\frac{1}{4}}u_t\Vert_X^2 - \eta\Vert A^{\frac{1}{4}}v_t\Vert_X^2 + \gamma_1\Vert u_t\Vert_X^2 + \gamma_1\langle u, -Au - u - \eta A^{\frac{1}{2}}u_t - a_{\epsilon}(t)A^{\frac{1}{2}}v_t + f(u)\rangle_X \\
&+\gamma_2\Vert v_t\Vert_X^2 + \gamma_2\langle v, -Av - \eta A^{\frac{1}{2}}v_t + a_{\epsilon}(t)A^{\frac{1}{2}}u_t\rangle_X \\
& = -\eta\Vert u_t\Vert_{X^{\frac{1}{4}}}^2 - \eta\Vert v_t\Vert_{X^{\frac{1}{4}}}^2 + \gamma_1\Vert u_t\Vert_X^2 - \gamma_1(\Vert u\Vert_{X^{\frac{1}{2}}}^2 + \Vert u\Vert_X^2) - \gamma_1\eta\langle A^{\frac{1}{2}}u, u_t\rangle_X \\
& - \gamma_1a_{\epsilon}(t)\langle A^{\frac{1}{2}}u, v_t\rangle_X 
+\gamma_1\langle u, f(u)\rangle_X + \gamma_2\Vert v_t\Vert_X^2 - \gamma_2\Vert v\Vert_{X^{\frac{1}{2}}}^2 \\
& - \gamma_2\eta\langle A^{\frac{1}{2}}v, v_t\rangle_X + \gamma_2a_{\epsilon}(t)\langle A^{\frac{1}{2}}v, u_t\rangle_X.
\end{aligned}
\]

Now, if $c > 0$ is the embedding constant of $X^{\frac{1}{4}} \hookrightarrow X$, then one has
\begin{equation}\label{eq5.3A}
-\eta\Vert\cdot\Vert_{X^{\frac{1}{4}}}^2 \leq -\eta\frac{1}{c^2}\Vert\cdot\Vert_X^2.
\end{equation}

Moreover, by Lemma \ref{Lem_Aux_Int}, item $(ii)$, for each $\delta > 0$, there exists a constant $C_{\delta} > 0$ such that
\[
\int_{\Omega} f(u)u dx \leq \delta\Vert u\Vert_X^2 + C_{\delta},
\]
which implies
\begin{equation}\label{eq5.4A}
\gamma_1\langle u, f(u)\rangle_X  \leq \gamma_1\delta\Vert u\Vert_X^2 + \gamma_1C_{\delta} \leq \gamma_1\delta\lambda_1^{-1}\Vert u\Vert_{X^{\frac{1}{2}}}^2 + \gamma_1C_{\delta}.
\end{equation}

Thus, using \eqref{eq5.3A}, \eqref{eq5.4A} and the Cauchy-Schwartz and Young inequalities, we have
\[
\begin{aligned}
&\frac{d}{dt}L_{\gamma_1, \gamma_2}(u, u_t, v, v_t) \\
&\leq -\gamma_1(1 - \delta\lambda_1^{-1})\Vert u\Vert_{X^{\frac{1}{2}}}^2 - \left(\eta\frac{1}{c^2} - \gamma_1\right)\Vert u_t\Vert_X^2 - \gamma_2\Vert v\Vert_{X^{\frac{1}{2}}}^2 - \left(\eta\frac{1}{c^2} - \gamma_2\right)\Vert v_t\Vert_X^2 \\
&+ \gamma_1C_{\delta} +\gamma_1\eta\left(\frac{\epsilon_1}{2}\Vert u\Vert_{X^{\frac{1}{2}}}^2 + \frac{1}{2\epsilon_1}\Vert u_t\Vert_X^2\right) + \gamma_1a_1\left(\frac{1}{2\epsilon_2}\Vert u\Vert_{X^{\frac{1}{2}}}^2 + \frac{\epsilon_2}{2}\Vert v_t\Vert_X^2\right) \\
& +\gamma_2a_1\left(\frac{1}{2\epsilon_3}\Vert v\Vert_{X^{\frac{1}{2}}}^2 + \frac{\epsilon_3}{2}\Vert u_t\Vert_X^2\right) + \gamma_2\eta\left(\frac{\epsilon_4}{2}\Vert v\Vert_{X^{\frac{1}{2}}}^2 + \frac{1}{2\epsilon_4}\Vert v_t\Vert_X^2\right) \\
&= -\gamma_1\left(1 - \delta\lambda_1^{-1} - \eta\frac{\epsilon_1}{2} - a_1\frac{1}{2\epsilon_2}\right)\Vert u\Vert_{X^{\frac{1}{2}}}^2 - \left(\eta\frac{1}{c^2} - \gamma_1 - \gamma_1\eta\frac{1}{2\epsilon_1} - \gamma_2a_1\frac{\epsilon_3}{2}\right)\Vert u_t\Vert_X^2\\
&-\gamma_2\left(1 - a_1\frac{1}{2\epsilon_3} - \eta\frac{\epsilon_4}{2}\right)\Vert v\Vert_{X^{\frac{1}{2}}}^2 - \left(\eta\frac{1}{c^2} - \gamma_2 - \gamma_1a_1\frac{\epsilon_2}{2} - \gamma_2\eta\frac{1}{2\epsilon_4}\right)\Vert v_t\Vert_X^2 + \gamma_1C_{\delta}
\end{aligned}
\]
for all $\epsilon_1, \epsilon_2, \epsilon_3, \epsilon_4 > 0$. Choosing $\delta = \frac{\lambda_1}{8}$, $\epsilon_1=\epsilon_4=\frac{1}{\eta}$ and $\epsilon_2=\epsilon_3=2a_1$, we obtain
\[
\begin{aligned}
\frac{d}{dt}L_{\gamma_1, \gamma_2}(u, u_t, v, v_t) &\leq  -\frac{1}{8}\gamma_1\Vert u\Vert_{X^{\frac{1}{2}}}^2 - \left(\eta\frac{1}{c^2} - \gamma_1\left(1 + \frac{\eta^2}{2}\right) - \gamma_2a_1^2\right)\Vert u_t\Vert_X^2 - \frac{1}{4}\gamma_2\Vert v\Vert_{X^{\frac{1}{2}}}^2 \\
&-\left(\eta\frac{1}{c^2} - \gamma_1a_1^2 - \gamma_2\left(1 + \frac{\eta^2}{2}\right)\right)\Vert v_t\Vert_X^2 + \gamma_1C_{\frac{\lambda_1}{8}}.
\end{aligned}
\]

We may choose $\gamma_i > 0, i = 1, 2,$ sufficiently small such that
$$
\gamma_i < \frac{\eta}{4c^2}\min\left\{ \frac{1}{a_1^2}, \ \left(1 + \frac{\eta^2}{2}\right)^{-1} \right\}, \ i = 1, 2.
$$

Now, taking
$$
C_1 = \min \left\{ \frac{1}{8}\gamma_1, \ \eta\frac{1}{c^2} - \gamma_1\left(1 + \frac{\eta^2}{2}\right) - \gamma_2a_1^2, \ \frac{1}{4}\gamma_2, \ \eta\frac{1}{c^2} - \gamma_1a_1^2 - \gamma_2\left(1 + \frac{\eta^2}{2}\right) \right\} > 0,
$$
and $C_2 = \gamma_1C_{\frac{\lambda_1}{8}} > 0$, we obtain
\begin{equation}\label{derivada funcional norma}
\frac{d}{dt}L_{\gamma_1, \gamma_2}(u, u_t, v, v_t) \leq -C_1 \Vert (u, u_t, v, v_t) \Vert_{Y_0}^2 + C_2.
\end{equation}
Note that $C_1$ and $C_2$ are independent of $B$.

We claim that there exists $K > 0$ such that $L_{\gamma_1, \gamma_2} (u, u_t, v, v_t) \geq \frac{1}{8}\Vert (u, u_t, v, v_t) \Vert_{Y_0}^2 - K$.

In fact, by Lemma \ref{Lem_Aux_Int}, item $(ii)$, given $\tilde{\delta} > 0$, there exists a constant $C_{\tilde{\delta}} > 0$ such that
$$
\int_{\Omega} \int_0^u f(s) ds dx \leq \tilde{\delta} \Vert u\Vert_X^2 + C_{\tilde{\delta}},
$$
which, together with $\Vert u\Vert_X^2 \leq  \lambda_1^{-1}\Vert u\Vert_{X^{\frac{1}{2}}}^2$, implies
\[
\begin{split}
L_{\gamma_1, \gamma_2}(u, u_t, v, v_t) &\geq \frac{1}{4}\Vert (u, u_t, v, v_t)\Vert_{Y_0}^2 - \int_{\Omega}\int_0^u f(s)dsdx \\
&\geq \frac{1}{4}\Vert (u, u_t, v, v_t)\Vert_{Y_0}^2 - \tilde{\delta}\lambda_1^{-1}\Vert u\Vert_{X^{\frac{1}{2}}}^2 - C_{\tilde{\delta}} \\
\end{split}
\]
\[
\begin{split}
\hspace{2cm} &\quad \geq \left(\frac{1}{4} - \tilde{\delta}\lambda_1^{-1}\right)\Vert (u, u_t, v, v_t)\Vert_{Y_0}^2 - C_{\tilde{\delta}}.
\end{split}
\]

Choosing $\tilde{\delta} = \frac{\lambda_1}{8}$, we get
\begin{equation}\label{Ch2eqdecayL}
L_{\gamma_1, \gamma_2} (u, u_t, v, v_t) \geq \frac{1}{8}\Vert (u, u_t, v, v_t) \Vert_{Y_0}^2 - K,
\end{equation}
where $K = C_{\frac{\lambda_1}{8}} > 0$, which proves the claim.

Now, define the set
\[
\ell_r = \sup\{\Vert (u, u_t, v, v_t) \Vert_{Y_0}^2\colon t \geq \tau, \Vert (u(\tau), u_t(\tau), v(\tau), v_t(\tau)) \Vert_{Y_0}^2 \leq r\}.
\]
Note that $\ell_r < \infty$ for each $r > 0$. In fact, by the proof of Theorem \ref{global-sol}, we have
\[
\Vert W(t) \Vert_{Y_0}^2 = \Vert (u(t),u_t(t), v(t), v_t(t)) \Vert_{Y_0}^2 \leq 4\left(\mathcal{E}(\tau) + C_{\frac{\lambda_1}{4}}\right), \quad t \geq \tau,
\]
where
\[
\begin{split}
\mathcal{E}(\tau) & = \frac{1}{2}\Vert W(\tau)\Vert_{Y_0}^2 + \frac{1}{2}\Vert u(\tau)\Vert_X^2  - \int_{\Omega}\int_0^{u(\tau)} f(s)dsdx\\
& \leq   \frac{1}{2}\Vert W(\tau)\Vert_{Y_0}^2  + \lambda_1^{-1}\Vert u(\tau)\Vert_{X^{\frac{1}{2}}}^2  +\left|\int_{\Omega}\int_0^{u(\tau)} f(s)dsdx\right| \\
& \leq \left(\frac{1}{2} +\lambda_1^{-1}\right)\Vert W(\tau)\Vert_{Y_0}^2 +  C_{r}\Vert u(\tau)\Vert_{X^{\frac{1}{2}}}^2 + C\\
& \leq \left(\frac{1}{2} +\lambda_1^{-1}\right) r +  C_{r}r + C. \hspace{3.5cm}
\end{split}
\]
with $C_r$ and $C$ given by Lemma \ref{Lem_Aux_Int}, item $(iii)$. This shows that $\ell_r < \infty$.
 
Now, we claim that given a bounded set $B \subset Y_0$ there exists $t_0(B) > 0$ such that
\[
\Vert (u, u_t, v, v_t) \Vert_{Y_0}^2 \leq  \max\left\{8K, \ell_{\frac{C_2 +1}{C_1}}\right\} \quad \text{for all} \quad t \geq  \tau + t_0(B).
\] 
In fact, let $B \subset Y_0$ be a bounded set. Let $r_0 > 0$ be such that $B \subset B_{Y_0}(0, r_0)$. By \eqref{desig principal} and Lemma \ref{Lem_Aux_Int}, we obtain
\[
L_{\gamma_1, \gamma_2}(u(\tau), u_t(\tau), v(\tau), v_t(\tau)) \leq \frac{3}{4}(1 + \lambda_1^{-1})r_0+ r_0C_{r_0} +C  = T_{r_0},
\]
for all $(u(\tau), u_t(\tau), v(\tau), v_t(\tau)) \in B$.

Let $(u(\tau), u_t(\tau), v(\tau), v_t(\tau)) \in B$ be arbitrary. If $\Vert (u, u_t, v, v_t) \Vert_{Y_0}^2 > \frac{C_2 +1}{C_1}$ for all $t \geq \tau$ then
\[
\frac{d}{dt}L_{\gamma_1, \gamma_2}(u, u_t, v, v_t) \leq -C_1 \Vert (u, u_t, v, v_t) \Vert_{Y_0}^2 + C_2 \leq -1 \quad \text{for all} \quad t \geq \tau,
\]
which implies
\[
L_{\gamma_1, \gamma_2}(u, u_t, v, v_t) \leq L_{\gamma_1, \gamma_2}(u(\tau), u_t(\tau), v(\tau), v_t(\tau)) -(t - \tau) \quad \text{for all} \quad t \geq \tau.
\]
Thus, $L_{\gamma_1, \gamma_2}(u, u_t, v, v_t) \leq 0$ for all $t \geq \tau + T_{r_0}$. Consequently, using \eqref{Ch2eqdecayL}, we have
\[
\Vert (u, u_t, v, v_t) \Vert_{Y_0}^2 \leq 8K \quad \text{for all} \quad t \geq \tau + T_{r_0}.
\]

On the other hand, if there exists $t_u \geq \tau $ such that  $\Vert (u(t_u), u_t(t_u), v(t_u), v_t(t_u)) \Vert_{Y_0}^2 \leq \frac{C_2 +1}{C_1}$ (take the smallest $t_u$ with this property) then 
\[
\Vert (u, u_t, v, v_t) \Vert_{Y_0}^2 \leq \ell_{\frac{C_2 +1}{C_1}} \quad \text{for all} \quad t \geq  t_u.
\]
Set
\[
B^u = \left\{w_0 \in B\colon \text{there exists} \; t_u^{w_0} > \tau \; \text{such that} \; \|W(t_u^{w_0})w_0\|_{Y_0}^2 = \frac{C_2 + 1}{C_1} \; \text{and} \right.
\]
\[
\left. \|W(t)w_0\|_{Y_0}^2 > \frac{C_2 + 1}{C_1} \;\text{for all} \; \tau \leq t < t_u^{w_0}\right\}.
\]
We claim that $T_u(B) = \sup\{t_u^{w_0}\colon w_0 \in B^u\} < \infty$. In fact, suppose to the contrary that there exists a sequence $\{w_0^n\}_{n \in \mathbb{N}} \subset B^u$ such that $t_u^{w_0^n} \to \infty$ as $n \to \infty$.
Since $\Vert W(t)w_0^n \Vert_{Y_0}^2 \geq \frac{C_2 +1}{C_1}$ for all $\tau \leq t \leq t_u^{w_0^n}$, we conclude that
\[
L_{\gamma_1, \gamma_2}(W(t)w_0^n) \leq L_{\gamma_1, \gamma_2}(w_0^{n}) -(t - \tau) \leq T_{r_0} - t + \tau\quad \text{for all} \quad \tau \leq t \leq t_u^{w_0^n}.
\] 
This implies that $\displaystyle\lim_{n\to \infty} L_{\gamma_1, \gamma_2}(W(t_u^{w_0^n})w_0^n) = -\infty$. But, using \eqref{desig principal}, we obtain
\[
\begin{split}
\dfrac{1}{4}\|W(t_u^{w_0^n})w_0^n\|_{Y_0}^2 & \leq  L_{\gamma_1, \gamma_2}(W(t_u^{w_0^n})w_0^n) + \int_{\Omega}\int_0^{u(t_u^{w_0^n})} f(s)dsdx \\
& 
\leq  L_{\gamma_1, \gamma_2}(W(t_u^{w_0^n})w_0^n)  + \left| \int_{\Omega}\int_0^{u(t_u^{w_0^n})} f(s)dsdx\right|\\
& \leq  L_{\gamma_1, \gamma_2}(W(t_u^{w_0^n})w_0^n)   + C_{\frac{C_2+1}{C_1}}\|u(t_u^{w_0^n})\|_{X^{\frac{1}{2}}}^2 +C \\
& \leq  L_{\gamma_1, \gamma_2}(W(t_u^{w_0^n})w_0^n)   + C_{\frac{C_2+1}{C_1}}\|W(t_u^{w_0^n})w_0^{n}\|_{Y_0}^2  +C\\
& =  L_{\gamma_1, \gamma_2}(W(t_u^{w_0^n})w_0^n)   + C_{\frac{C_2+1}{C_1}}\frac{C_2+1}{C_1}  +C\\
\end{split}
\]
which contradicts the fact that $\displaystyle\lim_{n\to \infty} L_{\gamma_1, \gamma_2}(W(t_u^{w_0^n})w_0^n) = -\infty$.

Taking $t_0(B) = \max\{T_u(B), T_{r_0}\}$, we conclude that
\[
\Vert (u, u_t, v, v_t) \Vert_{Y_0}^2 \leq  \max\left\{8K, \ell_{\frac{C_2 +1}{C_1}}\right\} \quad \text{for all} \quad t \geq  \tau + t_0(B).
\]

This shows that, if $s \leq t$ and $B\subset Y_0$ is a bounded set then
\[
S(s, \tau)B \subset \overline{B_{Y_0}(0, R)} \quad \text{for all} \quad \tau \leq \tau_0(s, B),
\]
where $\tau_0(s, B) = s - t_0(B)$ and $R = \max\left\{8K, \ell_{\frac{C_2 +1}{C_1}}\right\}$. Therefore, the process given by $(\ref{evolution process of the problem})$ is  pullback strongly bounded dissipative.
\end{proof}

Next, we prove that the solutions of problem \eqref{edp abstrata} are uniformly exponentially dominated when the initial data are in bounded subsets of $Y_0$.

\begin{theorem}\label{the solution is exponentially dominatedP2}  Let $B \subset Y_0$ be a bounded set. If $W\colon [\tau, \infty) \to Y_0$ is the global solution of \eqref{edp abstrata} starting at $W_0 \in B$, then there are positive constants $\sigma = \sigma(B)$, $K_1 = K_1(B)$ and $K_2 = K_2(B)$ such that
\[
\Vert W(t)\Vert_{Y_0}^2 \leq K_1e^{-\sigma(t - \tau)} + K_2, \quad t\geq \tau.
\]
\end{theorem}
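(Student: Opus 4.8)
The plan is to convert the Lyapunov-type inequality \eqref{derivada funcional norma} into a closed differential inequality for the functional $L_{\gamma_1,\gamma_2}$ alone and then integrate it by a scalar Gronwall argument; write $W(t)=(u(t),u_t(t),v(t),v_t(t))$ for the global solution issuing from $W_0\in B$. First I would establish that the whole forward trajectory stays in a fixed ball depending only on $B$. Choosing $r_0>0$ with $B\subset B_{Y_0}(0,r_0)$, the a priori bound obtained inside the proof of Theorem \ref{the solution is exponentially dominated} (the finiteness of $\ell_r$) yields a constant $\ell=\ell(B)>0$ with $\Vert W(t)\Vert_{Y_0}^2\le \ell$ for all $t\ge\tau$. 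In particular $\Vert u(t)\Vert_{X^{\frac{1}{2}}}\le\sqrt{\ell}=:r$ for every $t\ge\tau$, which is exactly the hypothesis needed to apply Lemma \ref{Lem_Aux_Int}, item $(iii)$, with this fixed radius $r$.

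Second, I would turn this into an upper bound for $L_{\gamma_1,\gamma_2}$ in terms of the $Y_0$-norm. Combining the definition \eqref{funcional de energia modificado} with the right-hand inequality in \eqref{desig principal} gives
\[
L_{\gamma_1,\gamma_2}(W(t))\le \tfrac{3}{4}(1+\lambda_1^{-1})\Vert W(t)\Vert_{Y_0}^2-\int_{\Omega}\int_0^{u(t)}f(s)\,ds\,dx,
\]
and Lemma \ref{Lem_Aux_Int}, item $(iii)$, controls the potential term by $C_r\Vert u(t)\Vert_{X^{\frac{1}{2}}}^2+C\le C_r\Vert W(t)\Vert_{Y_0}^2+C$. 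Hence there exist constants $C_3=C_3(B)>0$ and $C_4=C_4(B)\ge0$ such that $L_{\gamma_1,\gamma_2}(W(t))\le C_3\Vert W(t)\Vert_{Y_0}^2+C_4$ for all $t\ge\tau$, which rearranges to the lower bound $\Vert W(t)\Vert_{Y_0}^2\ge C_3^{-1}\bigl(L_{\gamma_1,\gamma_2}(W(t))-C_4\bigr)$.

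Third, I would insert this into \eqref{derivada funcional norma}. Setting $y(t)=L_{\gamma_1,\gamma_2}(W(t))$, the inequality $\frac{dy}{dt}\le -C_1\Vert W(t)\Vert_{Y_0}^2+C_2$ together with the lower bound just obtained gives $\frac{dy}{dt}\le-\sigma y+D$, where $\sigma=C_1/C_3>0$ and $D=C_1C_4/C_3+C_2>0$. Integrating this scalar inequality yields $y(t)\le y(\tau)e^{-\sigma(t-\tau)}+D/\sigma$ for all $t\ge\tau$. Finally I would return to the $Y_0$-norm through \eqref{Ch2eqdecayL}, which reads $\Vert W(t)\Vert_{Y_0}^2\le 8\bigl(y(t)+K\bigr)$, while bounding the initial value by $y(\tau)=L_{\gamma_1,\gamma_2}(W_0)\le C_3 r_0^2+C_4$. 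This delivers $\Vert W(t)\Vert_{Y_0}^2\le K_1 e^{-\sigma(t-\tau)}+K_2$ for $t\ge\tau$, with $K_1=8(C_3 r_0^2+C_4)$ and $K_2=8(D/\sigma+K)$, all of which depend only on $B$.

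The step I expect to be the main obstacle is the second one, the upper bound $L_{\gamma_1,\gamma_2}\le C_3\Vert W\Vert_{Y_0}^2+C_4$. Since the potential energy $\int_\Omega\int_0^u f$ behaves like $\Vert u\Vert_{L^{\rho+1}(\Omega)}^{\rho+1}$ with $\rho+1>2$, it cannot be dominated by a fixed multiple of $\Vert W\Vert_{Y_0}^2$ unless $\Vert u\Vert_{X^{\frac{1}{2}}}$ is already known to be bounded; this is precisely why the uniform trajectory bound of the first step is indispensable, and why the constants $\sigma$, $K_1$, $K_2$ are genuinely allowed to depend on $B$.
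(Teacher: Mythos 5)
Your proposal is correct and takes essentially the same route as the paper's own proof: both use the finiteness of $\ell_r$ from the proof of Theorem \ref{the solution is exponentially dominated} to justify applying Lemma \ref{Lem_Aux_Int}, item $(iii)$, along the whole trajectory, obtain the upper bound $L_{\gamma_1,\gamma_2}(W(t)) \leq M_r\Vert W(t)\Vert_{Y_0}^2 + C$, turn \eqref{derivada funcional norma} into a linear differential inequality for $L_{\gamma_1,\gamma_2}$, integrate, and recover the $Y_0$-norm through \eqref{Ch2eqdecayL}. Your write-up is if anything slightly more explicit than the paper about the key point that the radius in Lemma \ref{Lem_Aux_Int}$(iii)$ is the uniform bound $\sqrt{\ell}$ on $\Vert u(t)\Vert_{X^{\frac{1}{2}}}$, which is exactly why the constants may depend on $B$.
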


\begin{proof}
Let $r>0$ be such that $B \subset B_{Y_0}(0, r)$. 
We claim that there is $M_r > 0$ and $C>0$ such that $L_{\gamma_1, \gamma_2}(u, u_t, v, v_t) \leq M_r\Vert (u, u_t, v, v_t)\Vert_{Y_0}^2 + C$ for all $t \geq \tau$. In fact, by $(\ref{desig principal})$, we have
\[
L_{\gamma_1, \gamma_2}(u, u_t, v, v_t) + \int_{\Omega}\int_0^u f(s)dsdx \leq \frac{3}{4}(1 + \lambda_1^{-1})\Vert (u, u_t, v, v_t)\Vert_{Y_0}^2.
\]
By the proof of Theorem \ref{the solution is exponentially dominated}, the set
\[
\ell_r = \sup\{\Vert (u, u_t, v, v_t) \Vert_{Y_0}^2\colon t \geq \tau, \Vert (u(\tau), u_t(\tau), v(\tau), v_t(\tau)) \Vert_{Y_0}^2 \leq r\} < \infty.
\]
Now, using Lemma \ref{Lem_Aux_Int}, condition $(iii)$, there are constants $C_{\ell_r} > 0$ and $C$ such that
\[
\left| \int_{\Omega}\int_0^u f(s)dsdx \right| \leq C_{\ell_r}\Vert u\Vert_{X^{\frac{1}{2}}}^2 + C
\]
whenever $\Vert (u(\tau), u_t(\tau), v(\tau), v_t(\tau))\Vert_{Y_0}^2 \leq r$.  Hence, if $\Vert (u(\tau), u_t(\tau), v(\tau), v_t(\tau))\Vert_{Y_0}^2 \leq r$, then
\[
\begin{split}
L_{\gamma_1, \gamma_2}(u, u_t, v, v_t) &\leq \frac{3}{4}(1 + \lambda_1^{-1})\Vert (u, u_t, v, v_t)\Vert_{Y_0}^2 - \int_{\Omega}\int_0^u f(s)dsdx  \\
&\leq \frac{3}{4}(1 + \lambda_1^{-1})\Vert (u, u_t, v, v_t)\Vert_{Y_0}^2 + C_{\ell_r}\Vert u\Vert_{X^{\frac{1}{2}}}^2 +C \\
&\leq M_r\Vert (u, u_t, v, v_t)\Vert_{Y_0}^2 +C,
\end{split}
\]
where $M_r = \frac{3}{4}(1 + \lambda_1^{-1}) + C_{\ell_r} > 0$, which proves the claim.

Using the proof of Theorem \ref{the solution is exponentially dominated}, it follows by \eqref{derivada funcional norma}  that
$$
\frac{d}{dt}L_{\gamma_1, \gamma_2}(W(t)) \leq -\frac{C_1}{M_r} L_{\gamma_1, \gamma_2}(W(t)) +\dfrac{CC_1}{M_r} + C_2, \quad t\geq\tau,
$$
which implies
\[
L_{\gamma_1, \gamma_2}(W(t)) \leq L_{\gamma_1, \gamma_2}(W(\tau)) e^{-\frac{C_1}{M_r} (t - \tau)} + \left(C_2 + \dfrac{CC_1}{M_r} \right)\frac{M_r}{C_1}, \quad t\geq\tau,
\]
and, using the fact that $\frac{1}{8}\Vert W(t)\Vert_{Y_0}^2 - K \leq L_{\gamma_1, \gamma_2}(W(t))$ (see \eqref{Ch2eqdecayL}), we conclude that
$$
\Vert W(t)\Vert_{Y_0}^2 \leq 8L_{\gamma_1, \gamma_2}(W(\tau)) e^{-\frac{C_1}{M_r} (t - \tau)} + 8\left( C_2\frac{M_r}{C_1} + C+K \right), \quad t\geq\tau.
$$

Since $L_{\gamma_1, \gamma_2}(W(\tau)) \leq K_r M_r+C$, we get
$$
\Vert W(t)\Vert_{Y_0}^2 \leq 8(K_rM_r+C)e^{-\frac{C_1}{M_r} (t - \tau)} + 8\left( C_2\frac{M_r}{C_1} + C+K \right), \quad t \geq \tau,
$$
and the result follows by taking $\sigma = \frac{C_1}{M_r}$, $K_1 = 8(K_rM_r+C)$ and $K_2 = 8\left( C_2\frac{M_r}{C_1} + C+K \right)$.
\end{proof}

\begin{theorem}\label{the solution decays} 
Let $B \subset Y_0$ be a bounded set and denote by $L\colon [\tau, \infty) \to Y_0$ the solution of the homogeneous problem \eqref{homogeneous pde} starting in $W_0 \in B$. Then there exist positive constants $K = K(B)$ and $\zeta$ such that
$$
\Vert L(t)\Vert_{Y_0}^2 \leq K e^{-\zeta(t - \tau)}, \quad t\geq \tau.
$$
\end{theorem}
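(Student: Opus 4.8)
The plan is to reuse the Lyapunov-functional argument from the proof of Theorem~\ref{the solution is exponentially dominated}, specialized to the homogeneous equation, where the nonlinearity $f$ is absent. Since $L(t)$ solves \eqref{homogeneous pde}, its components $(u, u_t, v, v_t)$ satisfy the same system \eqref{edp01}--\eqref{cond01} but with $f \equiv 0$; consequently the energy identity \eqref{derivada da energia} continues to hold, while the modified functional \eqref{funcional de energia modificado} loses its nonlinear term. So first I would introduce
\[
\tilde{L}_{\gamma_1, \gamma_2}(\phi, \varphi, \psi, \Phi) = \tfrac{1}{2}\Vert\phi\Vert_{X^{\frac{1}{2}}}^2 + \tfrac{1}{2}\Vert\phi\Vert_X^2 + \tfrac{1}{2}\Vert\varphi\Vert_X^2 + \tfrac{1}{2}\Vert\psi\Vert_{X^{\frac{1}{2}}}^2 + \tfrac{1}{2}\Vert\Phi\Vert_X^2 + \gamma_1\langle\phi,\varphi\rangle_X + \gamma_2\langle\psi,\Phi\rangle_X,
\]
that is, the map \eqref{funcional de energia modificado} with the integral term deleted.

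Next I would record the two-sided coercivity bound. Exactly as in \eqref{Eqneeded} and \eqref{Eqneeded1}, for $\gamma_i$ satisfying $\gamma_i < \tfrac{1}{2}$ and $\tfrac{\gamma_i}{2}\lambda_1^{-1} < \tfrac{1}{4}$ one obtains
\[
\tfrac{1}{4}\Vert (\phi, \varphi, \psi, \Phi)\Vert_{Y_0}^2 \leq \tilde{L}_{\gamma_1, \gamma_2}(\phi, \varphi, \psi, \Phi) \leq \tfrac{3(1 + \lambda_1^{-1})}{4}\Vert (\phi, \varphi, \psi, \Phi)\Vert_{Y_0}^2,
\]
the lower bound now holding without subtracting any integral term (since $f\equiv 0$), and the upper bound following from $\Vert\phi\Vert_X^2 \leq \lambda_1^{-1}\Vert\phi\Vert_{X^{\frac{1}{2}}}^2$. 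Thus $\tilde{L}_{\gamma_1,\gamma_2}$ is equivalent to the square of the $Y_0$-norm.

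Then I would differentiate $\tilde{L}_{\gamma_1, \gamma_2}$ along $L(t)$. The computation is the one carried out in Theorem~\ref{the solution is exponentially dominated}, except that the terms $\gamma_1\langle u, f(u)\rangle_X$ and the resulting constant $\gamma_1 C_\delta$ are now absent. Choosing $\gamma_1, \gamma_2$ as in that proof, I obtain the \emph{homogeneous} differential inequality $\frac{d}{dt}\tilde{L}_{\gamma_1, \gamma_2}(L(t)) \leq -C_1 \Vert L(t)\Vert_{Y_0}^2$, with the same constant $C_1 > 0$ but no additive $C_2$. Combining with the coercivity bound gives $\frac{d}{dt}\tilde{L}_{\gamma_1, \gamma_2}(L(t)) \leq -\zeta\, \tilde{L}_{\gamma_1, \gamma_2}(L(t))$ with $\zeta = \tfrac{4C_1}{3(1+\lambda_1^{-1})}$, and Gronwall's inequality yields $\tilde{L}_{\gamma_1, \gamma_2}(L(t)) \leq \tilde{L}_{\gamma_1, \gamma_2}(W_0)\, e^{-\zeta(t-\tau)}$. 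Applying the coercivity bound once more and using that $W_0 \in B$ is bounded gives $\Vert L(t)\Vert_{Y_0}^2 \leq K e^{-\zeta(t-\tau)}$ with $K = 3(1+\lambda_1^{-1})\sup_{w\in B}\Vert w\Vert_{Y_0}^2$.

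The essential point—and the only place requiring care—is that the dissipation furnished by \eqref{derivada da energia} is degenerate: it controls only $\Vert A^{\frac{1}{4}}u_t\Vert_X$ and $\Vert A^{\frac{1}{4}}v_t\Vert_X$, giving no direct decay of the displacement components $\Vert u\Vert_{X^{\frac{1}{2}}}$ and $\Vert v\Vert_{X^{\frac{1}{2}}}$. The cross terms $\gamma_1\langle u,u_t\rangle_X$ and $\gamma_2\langle v,v_t\rangle_X$ are precisely what produce negative multiples of $\Vert u\Vert_{X^{\frac{1}{2}}}^2$ and $\Vert v\Vert_{X^{\frac{1}{2}}}^2$ in the derivative, at the cost of coupling terms that must be absorbed via Young's inequality; the smallness conditions on $\gamma_1,\gamma_2$ that make this absorption work while keeping $\tilde{L}_{\gamma_1,\gamma_2}$ equivalent to the $Y_0$-norm are inherited unchanged from Theorem~\ref{the solution is exponentially dominated}. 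Since $f$ plays no role in the homogeneous problem, there is no genuinely new difficulty beyond this bookkeeping.
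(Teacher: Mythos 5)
Your proposal is correct and is essentially the paper's own argument: the paper proves this result by citing the proof of Theorem \ref{the solution is exponentially dominatedP2} with $f\equiv 0$, which is exactly the Lyapunov-functional/Gronwall computation you carry out, with all constants arising from the nonlinearity ($C_\delta$, $C_2$, $K$) vanishing so that the differential inequality becomes homogeneous and yields pure exponential decay.
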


\begin{proof}
The proof is analogous to the proof of Theorem \ref{the solution is exponentially dominatedP2} taking $f\equiv 0$. 
\end{proof}

\begin{proposition}\label{proc-c}
For each $t>\tau\in\mathbb{R}$, the evolution process $S(t, \tau)\colon  Y_0 \to Y_0$ given in $(\ref{evolution process of the problem})$ is a compact map.
\end{proposition}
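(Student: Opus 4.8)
The plan is to exploit the variation-of-constants splitting $S(t,\tau)=L(t,\tau)+U(t,\tau)$ from \eqref{process_formulation}--\eqref{parte compacta do processo}, together with the smoothing of the linear process $L$ and the compactness of the embeddings $Y_\theta\hookrightarrow Y_0$ for $\theta>0$. Fix $t>\tau$ and a bounded set $B\subset Y_0$. I will choose an exponent $\theta\in(0,\alpha)$ and show that $S(t,\tau)B$ is a \emph{bounded} subset of the more regular space $Y_\theta$. Since $A$ has compact resolvent, each component embedding $X^{(1+\theta)/2}\hookrightarrow X^{\frac{1}{2}}$ and $X^{\theta/2}\hookrightarrow X$ is compact, so $Y_\theta\hookrightarrow Y_0$ is compact (this is the general version of the embedding $Y_1\hookrightarrow Y_0$ already used). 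Hence $S(t,\tau)B$ will be precompact in $Y_0$, which is exactly the assertion that $S(t,\tau)$ is a compact map.

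The first step is to control the orbit. By Theorem \ref{the solution is exponentially dominatedP2} there is a constant $\rho=\rho(B)>0$ with $\Vert S(s,\tau)W_0\Vert_{Y_0}\leq\rho$ for all $s\in[\tau,t]$ and all $W_0\in B$. Since $F\colon Y_0\to Y_{\alpha-1}$ is Lipschitz on bounded subsets of $Y_0$ by Proposition \ref{F is Lipschitz}, it maps this bounded orbit into a bounded subset of $Y_{\alpha-1}$: there is $C_\rho>0$ with $\Vert F(S(s,\tau)W_0)\Vert_{Y_{\alpha-1}}\leq C_\rho$ for all such $s$ and $W_0$.

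The second step is to estimate the two summands in $Y_\theta$. For the linear part, the smoothing estimate $\Vert L(t,\tau)\Vert_{\mathcal{L}(Y_0,Y_\theta)}\leq C(t-\tau)^{-\theta}$ (valid since $0\leq\theta<1+\epsilon_0$) yields $\Vert L(t,\tau)W_0\Vert_{Y_\theta}\leq C(t-\tau)^{-\theta}\rho$ uniformly for $W_0\in B$. For the nonlinear part, using the smoothing property of $L$ on the extrapolated scale, $\Vert L(t,s)\Vert_{\mathcal{L}(Y_{\alpha-1},Y_\theta)}\leq C(t-s)^{(\alpha-1)-\theta}$, so
$$
\Vert U(t,\tau)W_0\Vert_{Y_\theta}\leq\int_\tau^t\Vert L(t,s)\Vert_{\mathcal{L}(Y_{\alpha-1},Y_\theta)}\,\Vert F(S(s,\tau)W_0)\Vert_{Y_{\alpha-1}}\,ds\leq C_\rho\int_\tau^t(t-s)^{(\alpha-1)-\theta}\,ds=\frac{C_\rho}{\alpha-\theta}(t-\tau)^{\alpha-\theta}.
$$
Adding the two bounds shows that $\Vert S(t,\tau)W_0\Vert_{Y_\theta}$ is bounded uniformly in $W_0\in B$, and the compact embedding $Y_\theta\hookrightarrow Y_0$ finishes the proof.

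The main obstacle is the integrable-singularity bookkeeping in the nonlinear term. One must pick $\theta$ strictly below $\alpha$ so that the kernel $(t-s)^{(\alpha-1)-\theta}$ has exponent strictly greater than $-1$ and is therefore integrable at $s=t$, while keeping $\theta>0$ so that the target space $Y_\theta$ embeds compactly into $Y_0$. The admissible window $\theta\in(0,\alpha)$ is nonempty precisely because $\alpha\in(0,1)$, and it is here that one uses the smoothing estimate for $L(t,s)$ on the negatively indexed extrapolation space $Y_{\alpha-1}$; this is furnished by the construction of the linear process in the extrapolated scale (cf. \cite{Nascimento}, \cite{Sobo}) and is the same smoothing that makes the mild formulation \eqref{parte compacta do processo} well defined in $Y_0$.
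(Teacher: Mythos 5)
Your proof is correct, but it takes a genuinely different route from the paper's. You argue abstractly: split $S(t,\tau)=L(t,\tau)+U(t,\tau)$ as in \eqref{process_formulation}--\eqref{parte compacta do processo}, use the parabolic smoothing of the linear process on the fractional scale (including the extrapolation space $Y_{\alpha-1}$) to show that $S(t,\tau)B$ is bounded in $Y_\theta$ for some small $\theta>0$, and conclude via the compact embedding $Y_\theta\hookrightarrow Y_0$. The paper instead works directly on the PDE system: it takes inner products of the equations with $A^{\frac{1}{2}}u$, $A^{\frac{1}{2}}v$, $A^{\frac{1}{2}}u_t$, $A^{\frac{1}{2}}v_t$, combines energy identities with a uniform-Gronwall-type double integration in time, and shows that $S(t,\tau)B$ is bounded in $X^{\frac{3}{4}}\times X^{\frac{1}{4}}\times X^{\frac{3}{4}}\times X^{\frac{1}{4}}$, which embeds compactly into $Y_0$; this is what the introduction means by obtaining compactness ``in a direct way''. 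Your argument is shorter and softer, but it leans on the estimate $\Vert L(t,s)\Vert_{\mathcal{L}(Y_{\alpha-1},Y_\theta)}\leq C(t-s)^{(\alpha-1)-\theta}$, which the paper's preliminaries state only for indices $0\leq\beta\leq\alpha<1+\epsilon_0$ along the scale; to invoke it within the paper's framework you must regard the scale as based at $Y_{-1}$ (so that $Y_{\alpha-1}$ and $Y_\theta$ carry indices $\alpha$ and $1+\theta$), and then the stated range additionally forces $\theta<\epsilon_0$, i.e.\ the admissible window is $\theta\in\left(0,\min\{\alpha,\epsilon_0\}\right)$ rather than $(0,\alpha)$ --- still nonempty, so your proof survives, but the constraint you state is not quite the right one (alternatively, cite the extrapolated-scale estimates of \cite{Nascimento}, \cite{Sobo} explicitly, as you suggest). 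What the paper's computational route buys in exchange is self-containedness (no smoothing estimates on negatively indexed spaces are needed) and a concrete quantitative gain of regularity, namely boundedness at the indices $\frac{3}{4},\frac{1}{4}$, which anticipates the bootstrap arguments used later for the regularity of the pullback attractor.
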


\begin{proof}
Using the identity $(\ref{derivada da energia})$, the energy functional $(\ref{funcional de energia})$ and the Cauchy-Schwartz and Young inequalities, we obtain
\[
\begin{split}
&\frac{1}{2} \frac{d}{dt} \left( \Vert u\Vert_{X^{\frac{1}{2}}}^2 + \Vert u\Vert_X^2 + \Vert u_t\Vert_X^2 + \Vert v\Vert_{X^{\frac{1}{2}}}^2 + \Vert v_t\Vert_X^2 \right) + \eta\Vert u_t\Vert_{X^{\frac{1}{4}}}^2 + \eta\Vert v_t\Vert_{X^{\frac{1}{4}}}^2 \\
&= \langle f(u), u_t \rangle_X \leq \Vert f(u)\Vert_X \Vert u_t\Vert_X \leq \tilde{c}\Vert f(u)\Vert_X \Vert u_t\Vert_{X^{\frac{1}{4}}} \leq \frac{1}{2\epsilon}\Vert f(u)\Vert_X^2 + \frac{\epsilon}{2}\tilde{c}^{2} \Vert u_t\Vert_{X^{\frac{1}{4}}}^2,
\end{split}
\]
for all $\epsilon > 0$, where $\tilde{c} > 0$ is the embedding constant of $X^{\frac{1}{4}} \hookrightarrow X$. Choosing $\epsilon = \frac{\eta}{\tilde{c}^{2}}$, we get
\begin{equation}\label{comp estimate1}
\begin{aligned}
&\frac{1}{2} \frac{d}{dt} \left( \Vert u\Vert_{X^{\frac{1}{2}}}^2 + \Vert u\Vert_X^2 + \Vert u_t\Vert_X^2 + \Vert v\Vert_{X^{\frac{1}{2}}}^2 + \Vert v_t\Vert_X^2 \right) + \frac{\eta}{2}\Vert u_t\Vert_{X^{\frac{1}{4}}}^2 + \eta\Vert v_t\Vert_{X^{\frac{1}{4}}}^2 \\
&\leq \frac{\tilde{c}^{2}}{2\eta} \Vert f(u)\Vert_X^2.
\end{aligned}
\end{equation}

Now, knowing that the embedding $X^{\frac{1}{2}} \hookrightarrow L^{2\rho}(\Omega)$ holds for $1 < \rho\leq \frac{n}{n-2}$, and using Lemma \ref{Lem_Aux_Int}, condition $(i)$, we get
\begin{equation}\label{comp estimate2}
\begin{split}
\Vert f(u)\Vert_X^2 &\leq \int_{\Omega} [c(1 + |u|^{\rho})]^2 dx \leq c_1 \int_{\Omega} (1 + |u|^{2\rho}) dx  \\
&= c_1|\Omega| + c_1\Vert u\Vert_{L^{2\rho}(\Omega)}^{2\rho} \leq c_1|\Omega| + c_2\Vert u\Vert_{X^{\frac{1}{2}}}^{2\rho} \leq c_1|\Omega| + c_2\Vert W\Vert_{Y_0}^{2\rho},
\end{split}
\end{equation}
where $c_1, c_2$ are positive constants and $W(t) = (u(t), u_t(t), v(t), v_t(t))$. Thus, combining $(\ref{comp estimate1})$ and $(\ref{comp estimate2})$, we obtain
$$
\frac{d}{dt} \left( \Vert W\Vert_{Y_0}^2 + \Vert u\Vert_X^2 \right) + \eta\Vert u_t\Vert_{X^{\frac{1}{4}}}^2 + 2\eta\Vert v_t\Vert_{X^{\frac{1}{4}}}^2 \leq \frac{\tilde{c}^{2}c_1|\Omega|}{\eta} + \frac{\tilde{c}^{2}c_2}{\eta} \Vert W\Vert_{Y_0}^{2\rho}.
$$

Integrating the previous inequality from $\tau$ to $t$, we obtain
\begin{equation}\label{comp estimate3}
\begin{aligned}
&\Vert W(t)\Vert_{Y_0}^2 + \Vert u(t)\Vert_X^2 + \eta\int_{\tau}^{t} \Vert u_t(r)\Vert_{X^{\frac{1}{4}}}^2 dr + 2\eta\int_{\tau}^{t} \Vert v_t(r)\Vert_{X^{\frac{1}{4}}}^2 dr \\
&\leq \frac{\tilde{c}^{2}c_1|\Omega|}{\eta}(t-\tau) + \frac{\tilde{c}^{2}c_2}{\eta} \int_{\tau}^{t} \Vert W(r)\Vert_{Y_0}^{2\rho} dr + \Vert W(\tau)\Vert_{Y_0}^2 + \Vert u(\tau)\Vert_X^2 \\
&\leq \frac{\tilde{c}^{2}c_1|\Omega|}{\eta}(t-\tau) + \frac{\tilde{c}^{2}c_2}{\eta} \int_{\tau}^{t} \Vert W(r)\Vert_{Y_0}^{2\rho} dr + (1 + \lambda_1^{-1})\Vert W(\tau)\Vert_{Y_0}^2,
\end{aligned}
\end{equation}
where we have used the Poincaré inequality $\Vert u(\tau)\Vert_X^2 \leq \lambda_1^{-1}\Vert u(\tau)\Vert_{X^{\frac{1}{2}}}^2$. Also, note that inequality $(\ref{comp estimate3})$ implies
\begin{equation}\label{comp estimate4}
\begin{aligned}
&\int_{\tau}^{t} \Vert u_t(r)\Vert_{X^{\frac{1}{4}}}^2 dr + \int_{\tau}^{t} \Vert v_t(r)\Vert_{X^{\frac{1}{4}}}^2 dr \\
&\leq \frac{\tilde{c}^{2}c_1|\Omega|}{\eta^2}(t-\tau) + \frac{\tilde{c}^{2}c_2}{\eta^2} \int_{\tau}^{t} \Vert W(r)\Vert_{Y_0}^{2\rho} dr + \frac{1 + \lambda_1^{-1}}{\eta} \Vert W(\tau)\Vert_{Y_0}^2.
\end{aligned}
\end{equation}

Now, consider the original system $(\ref{edp01})$. By taking the inner product of the first equation in \eqref{edp01} with $A^{\frac{1}{2}} u$, and also the inner product of the second equation in \eqref{edp01} with $A^{\frac{1}{2}} v$, and noticing the identity
$$\langle u_{tt}, A^{\frac{1}{2}} u\rangle_X = \frac{d}{dt} \langle u_t, A^{\frac{1}{2}} u\rangle_X - \Vert u_t\Vert_{X^{\frac{1}{4}}}^2,$$
we obtain,
\[
\begin{aligned}
&\frac{d}{dt} \langle u_t, A^{\frac{1}{2}} u\rangle_X - \Vert u_t\Vert_{X^{\frac{1}{4}}}^2 + \Vert u\Vert_{X^{\frac{3}{4}}}^2 + \Vert u\Vert_{X^{\frac{1}{4}}}^2 + \frac{\eta}{2} \frac{d}{dt} \Vert u\Vert_{X^{\frac{1}{2}}}^2 + a_{\epsilon}(t) \langle A^{\frac{1}{2}} v_t, A^{\frac{1}{2}} u\rangle_X \\
&+ \frac{d}{dt} \langle v_t, A^{\frac{1}{2}} v\rangle_X - \Vert v_t\Vert_{X^{\frac{1}{4}}}^2 + \Vert v\Vert_{X^{\frac{3}{4}}}^2 + \frac{\eta}{2} \frac{d}{dt} \Vert v\Vert_{X^{\frac{1}{2}}}^2 - a_{\epsilon}(t) \langle A^{\frac{1}{2}} u_t, A^{\frac{1}{2}} v\rangle_X \\
&= \langle f(u), A^{\frac{1}{2}} u\rangle_X.
\end{aligned}
\]

Once again, using the Cauchy-Schwartz and Young inequalities, we have
\[
\begin{split}
&\frac{d}{dt} \left( \langle u_t, A^{\frac{1}{2}} u\rangle_X + \langle v_t, A^{\frac{1}{2}} v\rangle_X \right) + \frac{\eta}{2} \frac{d}{dt} \left( \Vert u\Vert_{X^{\frac{1}{2}}}^2 + \Vert v\Vert_{X^{\frac{1}{2}}}^2 \right) + \Vert u\Vert_{X^{\frac{3}{4}}}^2 + \Vert u\Vert_{X^{\frac{1}{4}}}^2 + \Vert v\Vert_{X^{\frac{3}{4}}}^2 \\
\leq{}& \Vert u_t\Vert_{X^{\frac{1}{4}}}^2 + \Vert v_t\Vert_{X^{\frac{1}{4}}}^2 + a_1\Vert v_t\Vert_{X^{\frac{1}{4}}} \Vert u\Vert_{X^{\frac{3}{4}}} + a_1\Vert u_t\Vert_{X^{\frac{1}{4}}} \Vert v\Vert_{X^{\frac{3}{4}}} + \Vert f(u)\Vert_X \Vert u\Vert_{X^{\frac{1}{2}}} \\
\leq{}& \left( 1 + \frac{1}{2\epsilon_2} \right) \Vert u_t\Vert_{X^{\frac{1}{4}}}^2 + \left( 1 + \frac{1}{2\epsilon_1} \right) \Vert v_t\Vert_{X^{\frac{1}{4}}}^2 + \frac{\epsilon_1}{2} a_1^2\Vert u\Vert_{X^{\frac{3}{4}}}^2 + \frac{\epsilon_2}{2} a_1^2\Vert v\Vert_{X^{\frac{3}{4}}}^2 \\ 
& + \frac{1}{2}\Vert f(u)\Vert_X^2 + \frac{1}{2}\Vert u\Vert_{X^{\frac{1}{2}}}^2,
\end{split}
\]
for all $\epsilon_1, \epsilon_2 > 0$. Choosing $\epsilon_1 = \epsilon_2 = \frac{1}{a_1^2}$, and using $(\ref{comp estimate2})$, we get
\[
\begin{split}
&\frac{d}{dt} \left( \langle u_t, A^{\frac{1}{2}} u\rangle_X + \langle v_t, A^{\frac{1}{2}} v\rangle_X \right) + \frac{\eta}{2} \frac{d}{dt} \left( \Vert u\Vert_{X^{\frac{1}{2}}}^2 + \Vert v\Vert_{X^{\frac{1}{2}}}^2 \right) + \frac{1}{2} \Vert u\Vert_{X^{\frac{3}{4}}}^2 + \frac{1}{2} \Vert v\Vert_{X^{\frac{3}{4}}}^2 \\
&\leq \frac{2 + a_1^2}{2} \Vert u_t\Vert_{X^{\frac{1}{4}}}^2 + \frac{2 + a_1^2}{2} \Vert v_t\Vert_{X^{\frac{1}{4}}}^2 + \frac{c_1|\Omega|}{2} + \frac{c_2}{2}\Vert W\Vert_{Y_0}^{2\rho} + \frac{1}{2}\Vert W\Vert_{Y_0}^{2}.
\end{split}
\]

Integrating the previous inequality from $\tau$ to $t$, and using $(\ref{comp estimate4})$, we obtain

\[
\begin{split}
&\frac{\eta}{2} \left( \Vert u(t)\Vert_{X^{\frac{1}{2}}}^2 + \Vert v(t)\Vert_{X^{\frac{1}{2}}}^2 \right) + \frac{1}{2} \int_{\tau}^{t} \Vert u(r)\Vert_{X^{\frac{3}{4}}}^2 dr + \frac{1}{2} \int_{\tau}^{t} \Vert v(r)\Vert_{X^{\frac{3}{4}}}^2 dr \\
\leq{}& \frac{2 + a_1^2}{2} \left( \int_{\tau}^{t} \Vert u_t(r)\Vert_{X^{\frac{1}{4}}}^2 dr + \int_{\tau}^{t} \Vert v_t(r)\Vert_{X^{\frac{1}{4}}}^2 dr \right) + \frac{c_1|\Omega|}{2}(t-\tau) + \frac{c_2}{2} \int_{\tau}^{t} \Vert W(r)\Vert_{Y_0}^{2\rho} dr \\
&+ \frac{1}{2} \int_{\tau}^{t} \Vert W(r)\Vert_{Y_0}^{2} dr - \langle u_t(t), A^{\frac{1}{2}} u(t)\rangle_X - \langle v_t(t), A^{\frac{1}{2}} v(t)\rangle_X \\
&+ \langle u_t(\tau), A^{\frac{1}{2}} u(\tau)\rangle_X + \langle v_t(\tau), A^{\frac{1}{2}} v(\tau)\rangle_X + \frac{\eta}{2} \left( \Vert u(\tau)\Vert_{X^{\frac{1}{2}}}^2 + \Vert v(\tau)\Vert_{X^{\frac{1}{2}}}^2 \right) \\
\leq{}& \frac{2 + a_1^2}{2} \left( \frac{\tilde{c}^{2}c_1|\Omega|}{\eta^2}(t-\tau) + \frac{\tilde{c}^{2}c_2}{\eta^2} \int_{\tau}^{t} \Vert W(r)\Vert_{Y_0}^{2\rho} dr + \frac{1 + \lambda_1^{-1}}{\eta} \Vert W(\tau)\Vert_{Y_0}^2 \right) + \frac{c_1|\Omega|}{2}(t-\tau) \\
&+ \frac{c_2}{2} \int_{\tau}^{t} \Vert W(r)\Vert_{Y_0}^{2\rho} dr + \frac{1}{2} \int_{\tau}^{t} \Vert W(r)\Vert_{Y_0}^{2} dr + \frac{1}{2}\Vert W(t)\Vert_{Y_0}^{2} + \frac{1 + \eta}{2}\Vert W(\tau)\Vert_{Y_0}^{2},
\end{split}
\]
which implies
\begin{equation}\label{comp estimate5}
\begin{aligned}
&\int_{\tau}^{t} \Vert u(r)\Vert_{X^{\frac{3}{4}}}^2 dr + \int_{\tau}^{t} \Vert v(r)\Vert_{X^{\frac{3}{4}}}^2 dr \\
\leq{}& (2 + a_1^2) \left( \frac{\tilde{c}^{2}c_1|\Omega|}{\eta^2}(t-\tau) + \frac{\tilde{c}^{2}c_2}{\eta^2} \int_{\tau}^{t} \Vert W(r)\Vert_{Y_0}^{2\rho} dr + \frac{1 + \lambda_1^{-1}}{\eta} \Vert W(\tau)\Vert_{Y_0}^2 \right) \\
&+ c_1|\Omega|(t-\tau) + c_2\int_{\tau}^{t} \Vert W(r)\Vert_{Y_0}^{2\rho} dr + \int_{\tau}^{t} \Vert W(r)\Vert_{Y_0}^{2} dr \\
&+ \Vert W(t)\Vert_{Y_0}^{2} + (1 + \eta)\Vert W(\tau)\Vert_{Y_0}^{2}.
\end{aligned}
\end{equation}

On the other hand, taking the inner product of the first equation in \eqref{edp01} with $A^{\frac{1}{2}} u_t$, and also the inner product of the second equation in \eqref{edp01} with $A^{\frac{1}{2}} v_t$, and using $(\ref{comp estimate2})$, we have
\[
\begin{aligned}
&\frac{1}{2} \frac{d}{dt} \left( \Vert u_t\Vert_{X^{\frac{1}{4}}}^2 + \Vert u\Vert_{X^{\frac{3}{4}}}^2 + \Vert u\Vert_{X^{\frac{1}{4}}}^2 + \Vert v_t\Vert_{X^{\frac{1}{4}}}^2 + \Vert v\Vert_{X^{\frac{3}{4}}}^2 \right) + \eta\Vert u_t\Vert_{X^{\frac{1}{2}}}^2 + \eta\Vert v_t\Vert_{X^{\frac{1}{2}}}^2 \\
&= \langle f(u), A^{\frac{1}{2}} u_t \rangle_X \leq \Vert f(u)\Vert_X \Vert u_t\Vert_{X^{\frac{1}{2}}} \leq \frac{1}{2\eta} \Vert f(u)\Vert_X^2 + \frac{\eta}{2} \Vert u_t\Vert_{X^{\frac{1}{2}}}^2 \\
&\leq \frac{c_1|\Omega|}{2\eta} + \frac{c_2}{2\eta} \Vert W\Vert_{Y_0}^{2\rho} + \frac{\eta}{2} \Vert u_t\Vert_{X^{\frac{1}{2}}}^2,
\end{aligned}
\]
which yields
\[
\frac{d}{dt} \left( \Vert u\Vert_{X^{\frac{3}{4}}}^2 + \Vert u\Vert_{X^{\frac{1}{4}}}^2 + \Vert u_t\Vert_{X^{\frac{1}{4}}}^2 + \Vert v\Vert_{X^{\frac{3}{4}}}^2 + \Vert v_t\Vert_{X^{\frac{1}{4}}}^2 \right) \leq \frac{c_1|\Omega|}{\eta} + \frac{c_2}{\eta} \Vert W\Vert_{Y_0}^{2\rho}.
\]

Integrating the previous inequality from $r$ to $t$, for $\tau < r < t$, we have
\[
\begin{aligned}
&\Vert u(t)\Vert_{X^{\frac{3}{4}}}^2 + \Vert u(t)\Vert_{X^{\frac{1}{4}}}^2 + \Vert u_t(t)\Vert_{X^{\frac{1}{4}}}^2 + \Vert v(t)\Vert_{X^{\frac{3}{4}}}^2 + \Vert v_t(t)\Vert_{X^{\frac{1}{4}}}^2 \\
\leq{}&\frac{c_1|\Omega|}{\eta}(t-r) + \frac{c_2}{\eta} \int_{r}^{t} \Vert W(s)\Vert_{Y_0}^{2\rho} ds + \Vert u(r)\Vert_{X^{\frac{3}{4}}}^2 + \Vert u(r)\Vert_{X^{\frac{1}{4}}}^2 + \Vert u_t(r)\Vert_{X^{\frac{1}{4}}}^2 \\
&+ \Vert v(r)\Vert_{X^{\frac{3}{4}}}^2 + \Vert v_t(r)\Vert_{X^{\frac{1}{4}}}^2,
\end{aligned}
\]
consequently,
\begin{equation}\label{comp estimate6}
\begin{aligned}
&\Vert u(t)\Vert_{X^{\frac{3}{4}}}^2 + \Vert u_t(t)\Vert_{X^{\frac{1}{4}}}^2 + \Vert v(t)\Vert_{X^{\frac{3}{4}}}^2 + \Vert v_t(t)\Vert_{X^{\frac{1}{4}}}^2 \\
\leq{}&\frac{c_1|\Omega|}{\eta}(t-r) + \frac{c_2}{\eta} \int_{r}^{t} \Vert W(s)\Vert_{Y_0}^{2\rho} ds + \tilde{k} \Vert W(r)\Vert_{Y_0}^{2} + \Vert u(r)\Vert_{X^{\frac{3}{4}}}^2 \\
&+ \Vert u_t(r)\Vert_{X^{\frac{1}{4}}}^2 + \Vert v(r)\Vert_{X^{\frac{3}{4}}}^2 + \Vert v_t(r)\Vert_{X^{\frac{1}{4}}}^2,
\end{aligned}
\end{equation}
where we have used the embedding $X^{\frac{1}{2}} \hookrightarrow X^{\frac{1}{4}}$, i.e., $\Vert u(r)\Vert_{X^{\frac{1}{4}}}^2 \leq \tilde{k} \Vert u(r)\Vert_{X^{\frac{1}{2}}}^2$.

Now, by integrating inequality $(\ref{comp estimate6})$, with respect to $r$, from $\tau$ to $t$, we obtain
\begin{equation}\label{comp estimate7}
\begin{aligned}
&(t-\tau) \left( \Vert u(t)\Vert_{X^{\frac{3}{4}}}^2 + \Vert u_t(t)\Vert_{X^{\frac{1}{4}}}^2 + \Vert v(t)\Vert_{X^{\frac{3}{4}}}^2 + \Vert v_t(t)\Vert_{X^{\frac{1}{4}}}^2 \right) \\
\leq{}&\frac{c_1|\Omega|}{2\eta}(t-\tau)^2 + \frac{c_2}{\eta} \int_{\tau}^{t} \int_{r}^{t} \Vert W(s)\Vert_{Y_0}^{2\rho} ds dr + \tilde{k} \int_{\tau}^{t} \Vert W(r)\Vert_{Y_0}^{2} dr \\
&+ \int_{\tau}^{t} \Vert u(r)\Vert_{X^{\frac{3}{4}}}^2 dr + \int_{\tau}^{t} \Vert u_t(r)\Vert_{X^{\frac{1}{4}}}^2 dr \\
&+ \int_{\tau}^{t} \Vert v(r)\Vert_{X^{\frac{3}{4}}}^2 dr + \int_{\tau}^{t} \Vert v_t(r)\Vert_{X^{\frac{1}{4}}}^2 dr.
\end{aligned}
\end{equation}

Combining the inequalities obtained in $(\ref{comp estimate4})$, $(\ref{comp estimate5})$ and $(\ref{comp estimate7})$, we get
\begin{equation}\label{comp estimate8}
\begin{aligned}
&\Vert u(t)\Vert_{X^{\frac{3}{4}}}^2 + \Vert u_t(t)\Vert_{X^{\frac{1}{4}}}^2 + \Vert v(t)\Vert_{X^{\frac{3}{4}}}^2 + \Vert v_t(t)\Vert_{X^{\frac{1}{4}}}^2 \\
\leq{}&\frac{\tilde{c}^{2}c_1|\Omega|(3 + a_1^2)}{\eta^2} + c_1|\Omega| + \frac{c_1|\Omega|}{2\eta}(t-\tau) + \frac{c_2}{\eta(t-\tau)} \int_{\tau}^{t} \int_{r}^{t} \Vert W(s)\Vert_{Y_0}^{2\rho} ds dr \\
&+ \frac{1}{t-\tau} \left( \frac{\tilde{c}^{2}c_2(3 + a_1^2)}{\eta^2} + c_2 \right) \int_{\tau}^{t} \Vert W(r)\Vert_{Y_0}^{2\rho} dr + \frac{\tilde{k} + 1}{t-\tau} \int_{\tau}^{t} \Vert W(r)\Vert_{Y_0}^{2} dr \\
&+ \frac{1}{t-\tau} \Vert W(t)\Vert_{Y_0}^{2} + \frac{(1 + \lambda_1^{-1})(3 + a_1^2) + \eta(1 + \eta)}{\eta(t-\tau)} \Vert W(\tau)\Vert_{Y_0}^{2}.
\end{aligned}
\end{equation}

Now, if the global solution $W(t) = (u(t), u_t(t), v(t), v_t(t))$ of the problem $(\ref{edp01})-(\ref{cond01})$ starts in a bounded subset $B$ of $Y_0$, then
\begin{equation*}\label{comp estimate9}
\Vert W(\tau)\Vert_{Y_0} \leq M
\end{equation*}
for some positive constant $M$. Moreover, remember that from Theorem \ref{the solution is exponentially dominatedP2} there exist positive constants $\sigma = \sigma(B)$, $K_1 = K_1(B)$ and $K_2= K_2(B)$ such that
\begin{equation*}\label{comp estimate10}
\Vert W(t)\Vert_{Y_0}^2 \leq K_1e^{-\sigma(t - \tau)} + K_2, \quad  t \geq \tau.
\end{equation*}
With this, we can handle  with the three integrals that appear on the right hand side of inequality $(\ref{comp estimate8})$. In fact, first note that
\begin{equation*}
\begin{split}
\int_{\tau}^{t} \Vert W(r)\Vert_{Y_0}^{2} dr &\leq \int_{\tau}^{t} [K_1e^{-\sigma(r - \tau)} + K_2] dr \leq \frac{K_1}{\sigma} + K_2(t - \tau)
\end{split}
\end{equation*}
and
\begin{equation*}
\begin{split}
\int_{\tau}^{t} \Vert W(r)\Vert_{Y_0}^{2\rho} dr &\leq \int_{\tau}^{t} [\tilde{K_1} e^{-\rho\sigma(r - \tau)} + \tilde{K_2}] dr  \leq \frac{\tilde{K_1}}{\rho\sigma} + \tilde{K_2}(t - \tau),
\end{split}
\end{equation*}
where $\tilde{K_1}, \tilde{K_2}$ are positive constants. For the last integral remaining, note that
\[
\begin{split}
\int_{r}^{t} \Vert W(s)\Vert_{Y_0}^{2\rho} ds &\leq \int_{r}^{t} \left[ \tilde{\tilde{K_1}} e^{-\rho\sigma(s - \tau)} + \tilde{\tilde{K_2}} \right] ds \leq \frac{\tilde{\tilde{K_1}}}{\rho\sigma} e^{-\rho\sigma(r - \tau)} + \tilde{\tilde{K_2}}(t - r),
\end{split}
\]
for positive constants $\tilde{\tilde{K_1}}$ and $\tilde{\tilde{K_2}}$, and then it follows that
\begin{equation}\label{comp estimate13}
\begin{split}
\int_{\tau}^{t} \int_{r}^{t} \Vert W(s)\Vert_{Y_0}^{2\rho} ds dr &\leq \int_{\tau}^{t} \left[ \frac{\tilde{\tilde{K_1}}}{\rho\sigma} e^{-\rho\sigma(r - \tau)} + \tilde{\tilde{K_2}}(t - r) \right] dr  \\
&\leq \frac{\tilde{\tilde{K_1}}}{(\rho\sigma)^2} + \frac{\tilde{\tilde{K_2}}}{2}(t - \tau)^2.
\end{split}
\end{equation}

Finally, combining all the estimates in $(\ref{comp estimate8})-(\ref{comp estimate13})$, we conclude that there exist positive constants $k_1, k_2, k_3, k_4, k_5$ such that
\begin{equation}\label{comp estimate14}
\begin{aligned}
&\Vert u(t)\Vert_{X^{\frac{3}{4}}}^2 + \Vert u_t(t)\Vert_{X^{\frac{1}{4}}}^2 + \Vert v(t)\Vert_{X^{\frac{3}{4}}}^2 + \Vert v_t(t)\Vert_{X^{\frac{1}{4}}}^2 \\
&\leq k_{1} + k_{2}(t - \tau) + \frac{1}{t - \tau}[k_{3} e^{-k_{4} (t - \tau)} + k_{5}].
\end{aligned}
\end{equation}

Hence, $S(t, \tau)B$ is bounded in $X^{\frac{3}{4}} \times X^{\frac{1}{4}} \times X^{\frac{3}{4}} \times X^{\frac{1}{4}}$. Since $X^{\frac{3}{4}} \times X^{\frac{1}{4}} \times X^{\frac{3}{4}} \times X^{\frac{1}{4}} \hookrightarrow Y_0$,  and this embedding is compact, we conclude that $S(t, \tau)\colon  Y_0 \to Y_0$, given in $(\ref{evolution process of the problem})$, is compact for each $t>\tau$.
\end{proof}

We end this section with the proof of Theorem \ref{teo-pullback}.

\medskip

\noindent {\bf Proof of Theorem \ref{teo-pullback}:}  Theorem \ref{the solution is exponentially dominated} assures that the evolution process $S(t, \tau)\colon  Y_0 \to Y_0$ given by \eqref{evolution process of the problem} is  pullback strongly bounded dissipative. Additionally, it follows by Proposition \ref{proc-c} that  $S(t, \tau)\colon  Y_0 \to Y_0$ is compact, and, consequently,  it is pullback asymptotically compact. Now the result is a simple consequence of Theorem \ref{existence of the pullback attractor}. \qed

\section{Regularity of the pullback attractor}\label{RegA}

The purpose of this section is to show that the regularity of the pullback attractor can be improved, using energy estimates and progressive increases of regularity.

\vspace{.4cm}

\noindent {\bf Proof of Theorem \ref{RegPA}:} 
Let $\xi\colon  \mathbb{R}\to Y_0$ be a bounded global solution for the system \eqref{edp01}.  Since
$\bigcup\limits_{t \in \mathbb{R}}\mathbb{A}(t)$ is bounded in $Y_0$ (see Theorem \ref{teo-pullback}), we have $\{ \xi(t)\colon  t\in\mathbb{R} \}$ is a bounded subset of $Y_0$ by Theorem \ref{globalsolution}. Moreover, $\xi(\cdot) = (\mu(\cdot), \mu_t(\cdot), \nu(\cdot), \nu_t(\cdot))\colon  \mathbb{R}\to Y_0$ is such that $\xi(t) \in \mathbb{A}(t)$ for all $t\in\mathbb{R}$, and by \eqref{process_formulation},
$$
\xi(t) = L(t, \tau)\xi(\tau) + \int_{\tau}^t L(t, s)F(\xi(s)) ds, \quad t \geq \tau.
$$

Using the decay of $L(\cdot, \cdot)$, which was established in Theorem \ref{the solution decays}, and letting $\tau \rightarrow -\infty$, we get
$$
\xi(t) = \int_{-\infty}^t L(t, s)F(\xi(s)) ds, \quad t \in \mathbb{R}.
$$

Now, for $\tau\in\mathbb{R}$ fixed, we write $W_0 = (u_0,u_1,v_0,v_1) = \xi(\tau)$ and consider
$$
(u(t), u_t(t),  v(t),  v_t(t)) = U(t, \tau) W_0 = \int_{\tau}^t L(t, s)F(S(s, \tau)W_0) ds,
$$
where $U(\cdot, \cdot)$ is defined as in $(\ref{parte compacta do processo})$. Note that $(u(\cdot), v(\cdot))$ solves the system
\begin{equation} \label{edp05} 
\begin{cases}
u_{tt} - \Delta u + u + \eta(-\Delta)^{\frac{1}{2}}u_t + a_{\epsilon}(t)(-\Delta)^{\frac{1}{2}} v_t = f(u(t, \tau; u_0)), &\!\! (x, t) \in \Omega \times (\tau, \infty), \\
v_{tt} - \Delta v + \eta(-\Delta)^{\frac{1}{2}} v_t - a_{\epsilon}(t)(-\Delta)^{\frac{1}{2}}u_t = 0, &\!\! (x, t) \in \Omega \times (\tau, \infty),
\end{cases}
\end{equation}
with
\begin{equation}\label{cond03}
u(\tau, x) = 0, \  v(\tau, x) = 0, \ x\in\Omega.
\end{equation}

To estimate the solution of $(\ref{edp05})-(\ref{cond03})$ for $(u_0, u_1, v_0,  v_1)$ in a bounded subset $B \subset Y_0$, we again consider the maps
\[
\mathcal{E} (t) = \frac{1}{2}\Vert u(t)\Vert_{X^{\frac{1}{2}}}^2 + \frac{1}{2}\Vert u(t)\Vert_X^2 + \frac{1}{2}\Vert u_t(t)\Vert_X^2 + \frac{1}{2}\Vert v(t)\Vert_{X^{\frac{1}{2}}}^2 + \frac{1}{2}\Vert v_t(t)\Vert_X^2 - \int_{\Omega}\int_0^{ u(t)} f(s)dsdx,
\]
and
\[
\begin{split}
\mathcal{L}(t) &= \frac{1}{2}\Vert u(t)\Vert_{X^{\frac{1}{2}}}^2 + \frac{1}{2}\Vert u(t)\Vert_X^2 + \frac{1}{2}\Vert u_t(t)\Vert_X^2 + \frac{1}{2}\Vert v(t)\Vert_{X^{\frac{1}{2}}}^2 + \frac{1}{2}\Vert v_t(t)\Vert_X^2 \\
&\quad + \gamma_1\langle u(t),  u_t(t)\rangle_X + \gamma_2\langle v(t),  v_t(t)\rangle_X
\end{split}
\]
with $\gamma_1, \gamma_2 \in \mathbb{R}^+$. Using \eqref{edp05}, we can write
\[
\begin{split}
\frac{d}{dt}\mathcal{L} (t) & = \frac{d}{dt} \left( \mathcal{E}(t) + \gamma_1\langle u,  u_t\rangle_X + \gamma_2\langle v,  v_t\rangle_X + \int_{\Omega}\int_0^{ u} f(s)dsdx \right) \\
& = - \eta\Vert u_t\Vert_{X^{\frac{1}{4}}}^2 - \eta\Vert v_t\Vert_{X^{\frac{1}{4}}}^2 + \gamma_1\Vert u_t\Vert_X^2 - \gamma_1(\Vert u\Vert_{X^{\frac{1}{2}}}^2 + \Vert u\Vert_X^2) - \gamma_1\eta\langle A^{\frac{1}{2}} u,  u_t\rangle_X \\
& - \gamma_1a_{\epsilon}(t)\langle A^{\frac{1}{2}} u,  v_t\rangle_X 
+ \gamma_1\langle u, f( u)\rangle_X + \gamma_2\Vert v_t\Vert_X^2 - \gamma_2\Vert v\Vert_{X^{\frac{1}{2}}}^2 - \gamma_2\eta\langle A^{\frac{1}{2}} v,  v_t\rangle_X \\
& + \gamma_2a_{\epsilon}(t)\langle A^{\frac{1}{2}} v,  u_t\rangle_X + \langle f( u),  u_t\rangle_X.
\end{split}
\]

In the first place, let's deal with the nonlinearity $f$. By Lemma \ref{Lem_Aux_Int}, it follows that for each $\delta > 0$, there exists a constant $C_{\delta} > 0$ such that
$$
\int_{\Omega} f( u) u dx \leq \delta\Vert u\Vert_X^2 + C_{\delta}.
$$

Further, once the condition $1 < \frac{n-1}{n-2} \leq \rho < \frac{n}{n-2}$ implies $X^{\frac{1}{2}} \hookrightarrow L^{2\rho}(\Omega)$, and using again Lemma \ref{Lem_Aux_Int}, condition $(i)$, we have
\[
\begin{split}
\Vert f( u) \Vert_X & \leq \left( \int_{\Omega} [c(1 + | u|^{\rho})]^2 dx \right)^{\frac{1}{2}}  \leq \tilde{c} \left( |\Omega| + \int_{\Omega} | u|^{2\rho} dx \right)^{\frac{1}{2}} \\
& \leq \tilde{\tilde{c}} \left( |\Omega|^{\frac{1}{2}} + \Vert u\Vert_{L^{2\rho}(\Omega)}^{\rho} \right)  \leq \overline{c}\Vert u\Vert_{X^{\frac{1}{2}}}^{\rho} + \tilde{\tilde{c}}|\Omega|^{\frac{1}{2}}  \leq \overline{c}r^{\rho} + \tilde{\tilde{c}}|\Omega|^{\frac{1}{2}}  = \overline{C},
\end{split}
\]
whenever $\Vert u\Vert_{X^{\frac{1}{2}}} \leq r$.

Hence, using the Poincaré and Young Inequalities, one can obtain
\[
\begin{split}
& \frac{d}{dt}\mathcal{L}(t)  \leq - \eta\frac{1}{c^2}\Vert u_t\Vert_X^2 - \eta\frac{1}{c^2}\Vert v_t\Vert_X^2 + \gamma_1\Vert u_t\Vert_X^2 - \gamma_1\Vert u\Vert_{X^{\frac{1}{2}}}^2 + \gamma_1\eta\left( \frac{\epsilon_1}{2}\Vert u\Vert_{X^{\frac{1}{2}}}^2 + \frac{1}{2\epsilon_1}\Vert u_t\Vert_X^2 \right) \\
& + \gamma_1a_1\left( \frac{1}{2\epsilon_2}\Vert u\Vert_{X^{\frac{1}{2}}}^2 + \frac{\epsilon_2}{2}\Vert v_t\Vert_X^2 \right) + \gamma_1\delta\lambda_1^{-1}\Vert u\Vert_{X^{\frac{1}{2}}}^2 + \gamma_1C_{\delta} + \gamma_2\Vert v_t\Vert_X^2 - \gamma_2\Vert v\Vert_{X^{\frac{1}{2}}}^2 \\
& + \gamma_2a_1\left( \frac{1}{2\epsilon_3}\Vert v\Vert_{X^{\frac{1}{2}}}^2 + \frac{\epsilon_3}{2}\Vert u_t\Vert_X^2 \right) + \gamma_2\eta\left( \frac{\epsilon_4}{2}\Vert v\Vert_{X^{\frac{1}{2}}}^2 + \frac{1}{2\epsilon_4}\Vert v_t\Vert_X^2 \right) + \frac{1}{2\epsilon_5}\Vert f( u)\Vert_X^2 + \frac{\epsilon_5}{2}\Vert u_t\Vert_X^2 \\
&\leq -\gamma_1\left( 1 - \delta\lambda_1^{-1} - \eta\frac{\epsilon_1}{2} - a_1\frac{1}{2\epsilon_2} \right)\Vert u\Vert_{X^{\frac{1}{2}}}^2 - \left( \eta\frac{1}{c^2} - \gamma_1 - \gamma_1\eta\frac{1}{2\epsilon_1} - \gamma_2a_1\frac{\epsilon_3}{2} - \frac{\epsilon_5}{2} \right)\Vert u_t\Vert_X^2 \\
& - \gamma_2\left( 1 - a_1\frac{1}{2\epsilon_3} - \eta\frac{\epsilon_4}{2} \right)\Vert v\Vert_{X^{\frac{1}{2}}}^2 - \left( \eta\frac{1}{c^2} - \gamma_2 - \gamma_1a_1\frac{\epsilon_2}{2} - \gamma_2\eta\frac{1}{2\epsilon_4} \right)\Vert v_t\Vert_X^2 + \frac{1}{2\epsilon_5}\overline{C}^2 + \gamma_1C_{\delta}
\end{split}
\]
for all $\epsilon_1, \epsilon_2, \epsilon_3, \epsilon_4, \epsilon_5 > 0,$ where $c > 0$ is the embedding constant of $X^{\frac{1}{4}} \hookrightarrow X$. Choosing
$$
\delta = \frac{\lambda_1}{8}, \ \epsilon_1 = \frac{1}{\eta}, \ \epsilon_2 = 2a_1, \ \epsilon_3 = 2a_1, \ \epsilon_4 = \frac{1}{\eta}, \ \epsilon_5 = \frac{\eta}{c^2},
$$
it follows that
\[
\begin{split}
\frac{d}{dt}\mathcal{L}(t) &\leq -\frac{1}{8}\gamma_1\Vert u\Vert_{X^{\frac{1}{2}}}^2 - \left( \frac{\eta}{2c^2} - \gamma_1\left(1 + \frac{\eta^2}{2}\right) - \gamma_2a_1^2 \right)\Vert u_t\Vert_X^2 - \frac{1}{4}\gamma_2\Vert v\Vert_{X^{\frac{1}{2}}}^2 \\
&\quad -\left( \frac{\eta}{2c^2} - \gamma_1a_1^2 - \gamma_2\left(1 + \frac{\eta^2}{2}\right) \right)\Vert v_t\Vert_X^2 + \frac{c^2}{2\eta}\overline{C}^2 + \gamma_1C_{\frac{\lambda_1}{8}}.
\end{split}
\]

Now, note that one can take $\gamma_i > 0, i = 1, 2,$ sufficiently small such that
$$
\gamma_i < \frac{\eta}{8c^2}\min\left\{ \frac{1}{a_1^2}, \ \left(1 + \frac{\eta^2}{2}\right)^{-1} \right\}, \ i = 1, 2.
$$

Setting
$$
C_1 = \min\left\{ \frac{1}{8}\gamma_1, \frac{\eta}{2c^2} - \gamma_1\left(1 + \frac{\eta^2}{2}\right) - \gamma_2a_1^2, \ \frac{1}{4}\gamma_2, \ \frac{\eta}{2c^2} - \gamma_1a_1^2 - \gamma_2\left(1 + \frac{\eta^2}{2}\right) \right\} > 0
$$
and $C_2 =  \frac{c^2}{2\eta}\overline{C}^2 + \gamma_1C_{\frac{\lambda_1}{8}} > 0$, we obtain
\[
\frac{d}{dt}\mathcal{L}(t)  \leq -C_1 \Vert ( u,  u_t,  v,  v_t) \Vert_{Y_0}^2 + C_2.
\]

Using \eqref{Eqneeded} and \eqref{Eqneeded1}, we get 
\[
\frac{1}{4} \Vert ( u,  u_t,  v,  v_t) \Vert_{Y_0}^2 \leq \mathcal{L}(t)  \leq  \frac{3(1 + \lambda_1^{-1})}{4}\Vert ( u,  u_t,  v,  v_t) \Vert_{Y_0}^2,
\]
and putting $C_3 = C_1\left(\frac{3(1 + \lambda_1^{-1})}{4}\right)^{-1}$, one has
\[
\frac{1}{4} \Vert ( u,  u_t,  v,  v_t) \Vert_{Y_0}^2 \leq \mathcal{L}(t)  \leq \mathcal{L}(\tau)e^{-C_3(t - \tau)} + \frac{C_2}{C_3}, \quad t \geq \tau. 
\]

From this, we obtain
\[
\bigcup\limits_{\tau \leq s \leq t} U(s, \tau)B \ \text{is a bounded subset of} \ Y_0.
\]

On the other hand, note that $(\phi, \varphi) = (u_t, v_t)$ solves the system
\begin{equation}\label{edp07}
\begin{cases}
\phi_{tt} - \Delta\phi + \phi + \eta(-\Delta)^{\frac{1}{2}}\phi_t + a_{\epsilon}(t)(-\Delta)^{\frac{1}{2}}\varphi_t + a_{\epsilon}^{\prime}(t)(-\Delta)^{\frac{1}{2}}\varphi = f^{\prime}( u)\phi, \\
\varphi_{tt} - \Delta\varphi + \eta(-\Delta)^{\frac{1}{2}}\varphi_t - a_{\epsilon}(t)(-\Delta)^{\frac{1}{2}}\phi_t - a_{\epsilon}^{\prime}(t)(-\Delta)^{\frac{1}{2}}\phi = 0.
\end{cases}
\end{equation}

We want to estimate $(\phi, \phi_t, \varphi, \varphi_t)$ in $Y_0$, but our solutions are not regular enough for this to be done in a direct way. Thus, instead, the process will be done by progressive increases of regularity. For $\alpha > 0$, let us consider the fractional power spaces $X^{\alpha} = D(A^{\alpha})$ endowed with the graph norm, and let $X^{-\alpha} = (X^{\alpha})^{\prime}$. For
$$
(\phi, \phi_t, \varphi, \varphi_t) \in X^{\frac{1-\alpha}{2}} \times X^{-\frac{\alpha}{2}} \times X^{\frac{1-\alpha}{2}} \times X^{-\frac{\alpha}{2}},
$$
let us define
\[
\begin{split}
\mathcal{L}_{\alpha}(t) &= \frac{1}{2}\left( \Vert\phi(t)\Vert_{X^{\frac{1-\alpha}{2}}}^2 + \Vert\phi(t)\Vert_{X^{-\frac{\alpha}{2}}}^2 + \Vert\phi_t(t)\Vert_{X^{-\frac{\alpha}{2}}}^2 + \Vert\varphi(t)\Vert_{X^{\frac{1-\alpha}{2}}}^2 + \Vert\varphi_t(t)\Vert_{X^{-\frac{\alpha}{2}}}^2 \right) \\
&\quad + \gamma_1\langle \phi(t), \phi_t(t) \rangle_{X^{-\frac{\alpha}{2}}} + \gamma_2\langle \varphi(t), \varphi_t(t) \rangle_{X^{-\frac{\alpha}{2}}},
\end{split}
\]
with $\gamma_1, \gamma_2 \in \mathbb{R}^+$. Using $(\ref{edp07})$, we obtain
\begin{equation*}\label{derivada funcional regularidade}
\begin{split}
\frac{d}{dt} \mathcal{L}_{\alpha}(t) & = \langle\phi_t, \phi\rangle_{X^{\frac{1-\alpha}{2}}} + \langle\phi_t, \phi\rangle_{X^{-\frac{\alpha}{2}}} + \langle\phi_{tt}, \phi_t\rangle_{X^{-\frac{\alpha}{2}}} + \langle\varphi_t, \varphi\rangle_{X^{\frac{1-\alpha}{2}}} + \langle\varphi_{tt}, \varphi_t\rangle_{X^{-\frac{\alpha}{2}}} \\
& + \gamma_1\langle\phi_t, \phi_t\rangle_{X^{-\frac{\alpha}{2}}} + \gamma_1\langle\phi, \phi_{tt}\rangle_{X^{-\frac{\alpha}{2}}} + \gamma_2\langle\varphi_t, \varphi_t\rangle_{X^{-\frac{\alpha}{2}}} + \gamma_2\langle\varphi, \varphi_{tt}\rangle_{X^{-\frac{\alpha}{2}}} \\
\end{split}
\end{equation*}

\begin{equation*}\label{derivada funcional regularidade}
\begin{split}
&= \gamma_1\Vert\phi_t\Vert_{X^{-\frac{\alpha}{2}}}^2 + \gamma_2\Vert\varphi_t\Vert_{X^{-\frac{\alpha}{2}}}^2 - \gamma_1\Vert\phi\Vert_{X^{-\frac{\alpha}{2}}}^2 - \gamma_1\Vert\phi\Vert_{X^{\frac{1-\alpha}{2}}}^2 - \gamma_2\Vert\varphi\Vert_{X^{\frac{1-\alpha}{2}}}^2 - \eta\Vert\phi_t\Vert_{X^{\frac{1-2\alpha}{4}}}^2 \\
& -\eta\Vert\varphi_t\Vert_{X^{\frac{1-2\alpha}{4}}}^2 - \eta\gamma_1\langle\phi,A^{\frac{1}{2}}\phi_t\rangle_{X^{-\frac{\alpha}{2}}} - a_{\epsilon}(t)\gamma_1\langle\phi,A^{\frac{1}{2}}\varphi_t\rangle_{X^{-\frac{\alpha}{2}}} - \eta\gamma_2\langle\varphi,A^{\frac{1}{2}}\varphi_t\rangle_{X^{-\frac{\alpha}{2}}} \\
& + a_{\epsilon}(t)\gamma_2\langle\varphi,A^{\frac{1}{2}}\phi_t\rangle_{X^{-\frac{\alpha}{2}}} - a_{\epsilon}^{\prime}(t)\langle A^{\frac{1}{2}}\varphi,\phi_t\rangle_{X^{-\frac{\alpha}{2}}} + a_{\epsilon}^{\prime}(t)\langle A^{\frac{1}{2}}\phi,\varphi_t\rangle_{X^{-\frac{\alpha}{2}}} - a_{\epsilon}^{\prime}(t)\gamma_1\langle\phi,A^{\frac{1}{2}}\varphi\rangle_{X^{-\frac{\alpha}{2}}} \\
& + a_{\epsilon}^{\prime}(t)\gamma_2\langle\varphi,A^{\frac{1}{2}}\phi\rangle_{X^{-\frac{\alpha}{2}}} + \gamma_1\langle\phi,f^{\prime}( u)\phi\rangle_{X^{-\frac{\alpha}{2}}} + \langle f^{\prime}( u)\phi,\phi_t\rangle_{X^{-\frac{\alpha}{2}}}.
\end{split}
\end{equation*}

Next, we shall estimate  the terms that appear on the right hand side of the above expression, beginning with those in which the nonlinearity $f^{\prime}$ is explicit. To do this, consider
\[
\alpha_1 =  \frac{(\rho - 1)(n-2)}{2}.
\]
Since $\frac{n-1}{n-2} \leq \rho < \frac{n}{n-2}$, we have $\frac{1}{2} \leq \alpha_1 < 1$.

Noticing that
\begin{equation}\label{estimate0}
\langle f^{\prime}( u)\phi,\phi_t\rangle_{X^{-\frac{\alpha}{2}}} \leq \Vert f^{\prime}( u)\phi\Vert_{X^{-\frac{\alpha}{2}}}\Vert\phi_t\Vert_{X^{-\frac{\alpha}{2}}}
\end{equation}
and that the embedding $X^{\frac{\alpha}{2}} = H^{\alpha}(\Omega) \hookrightarrow L^p(\Omega)$ or, equivalently, $L^{\frac{p}{p-1}}(\Omega) \hookrightarrow X^{-\frac{\alpha}{2}}$, holds for any $2\leq p\leq\frac{2n}{n-2\alpha}$, one can obtain an estimate for the term $\Vert f^{\prime}( u)\phi\Vert_{X^{-\frac{\alpha}{2}}}$ using Hölder's Inequality and the growth condition, in the following way:
\[
\begin{split}
\Vert f^{\prime}( u)\phi \Vert_{X^{-\frac{\alpha}{2}}} & \leq c_1\Vert f^{\prime}( u)\phi\Vert_{L^{\frac{2n}{n+2\alpha}}(\Omega)} \leq c_1\Vert\phi\Vert_{L^2(\Omega)}\Vert f^{\prime}( u)\Vert_{L^{\frac{n}{\alpha}}(\Omega)} \\
& \leq c_1\Vert\phi\Vert_X \left( \int_{\Omega} [c(1 + | u|^{\rho - 1})]^{\frac{n}{\alpha}} dx \right)^{\frac{\alpha}{n}} \leq c_2\Vert\phi\Vert_X \left( |\Omega| + \int_{\Omega} | u|^{\frac{(\rho - 1)n}{\alpha}} dx \right)^{\frac{\alpha}{n}} \\
&\leq c_3\Vert\phi\Vert_X \left( 1 + \Vert u\Vert_{L^{\frac{(\rho - 1)n}{\alpha}}(\Omega)}^{\rho - 1} \right).
\end{split}
\]

Now, once the embedding $H^1(\Omega) \hookrightarrow L^{\frac{(\rho - 1)n}{\alpha}}(\Omega)$ holds, if and only if $\alpha\geq\frac{(\rho - 1)(n-2)}{2}$ and $\frac{(\rho - 1)n}{\alpha} \geq 2$, that is, $\frac{(\rho - 1)(n-2)}{2} \leq \alpha \leq \frac{(\rho - 1)n}{2}$,  then for $\alpha = \alpha_1$ we have
\begin{equation}\label{estimate1}
\begin{split}
\Vert f^{\prime}( u)\phi\Vert_{X^{-\frac{\alpha_{1}}{2}}} &\leq c_3\Vert\phi\Vert_X \left( 1 + \Vert u\Vert_{L^{\frac{(\rho - 1)n}{\alpha_{1}}}(\Omega)}^{\rho - 1} \right) \leq c_5\Vert\phi\Vert_X \left( 1 + \Vert u\Vert_{X^{\frac{1}{2}}}^{\rho - 1} \right) \leq c_6,
\end{split}
\end{equation}
since $ u$ and $\phi$ remain in bounded subsets of $X^{\frac{1}{2}}$ and $X$, respectively. Hence, from Young's inequality, and using $(\ref{estimate0})$ and $(\ref{estimate1})$, we get
\begin{equation*}\label{estimate2}
\begin{split}
\langle f^{\prime}( u)\phi,\phi_t\rangle_{X^{-\frac{\alpha_{1}}{2}}} & \leq \frac{1}{2\epsilon_0}\Vert f^{\prime}( u)\phi\Vert_{X^{-\frac{\alpha_{1}}{2}}}^2 + \frac{\epsilon_0}{2}\Vert\phi_t\Vert_{X^{-\frac{\alpha_{1}}{2}}}^2\\
&  \leq \frac{1}{2\epsilon_0}c_6^2 + \frac{\epsilon_0}{2}\Vert\phi_t\Vert_{X^{-\frac{\alpha_{1}}{2}}}^2
\end{split}
\end{equation*}
for all $\epsilon_0 > 0$. With this in mind, it is possible to obtain an estimate for the other term that has the nonlinearity $f^{\prime}$, that is,
\[
\begin{split}
\gamma_1\langle\phi, f^{\prime}( u)\phi\rangle_{X^{-\frac{\alpha_{1}}{2}}} 
\leq \frac{\epsilon_1}{2}\Vert\phi\Vert_{X^{-\frac{\alpha_{1}}{2}}}^2 + \frac{1}{2\epsilon_1}\gamma_1^2\Vert f^{\prime}( u)\phi\Vert_{X^{-\frac{\alpha_{1}}{2}}}^2 
\leq \frac{\epsilon_1}{2}\Vert\phi\Vert_{X^{-\frac{\alpha_{1}}{2}}}^2 + \frac{1}{2\epsilon_1}\gamma_1^2c_6^2
\end{split}
\]
for all $\epsilon_1 > 0$.

Next, from Cauchy-Schwartz and Young inequalities, we have
\[
\begin{split}
- \eta\gamma_1\langle\phi,A^{\frac{1}{2}}\phi_t\rangle_{X^{-\frac{\alpha_{1}}{2}}}   
\leq \eta\gamma_1\frac{\epsilon_2}{2}\Vert\phi\Vert_{X^{\frac{1-\alpha_{1}}{2}}}^2 + \eta\gamma_1\frac{1}{2\epsilon_2}\Vert\phi_t\Vert_{X^{-\frac{\alpha_{1}}{2}}}^2,
\end{split}
\]
\[
\begin{split}
- a_{\epsilon}(t)\gamma_1\langle\phi,A^{\frac{1}{2}}\varphi_t\rangle_{X^{-\frac{\alpha_{1}}{2}}}  
\leq a_1\gamma_1\frac{\epsilon_3}{2}\Vert\phi\Vert_{X^{\frac{1-\alpha_{1}}{2}}}^2 + a_1\gamma_1\frac{1}{2\epsilon_3}\Vert\varphi_t\Vert_{X^{-\frac{\alpha_{1}}{2}}}^2,
\end{split}
\]

\[
- \eta\gamma_2\langle\varphi,A^{\frac{1}{2}}\varphi_t\rangle_{X^{-\frac{\alpha_{1}}{2}}}  \leq \eta\gamma_2\frac{\epsilon_4}{2}\Vert\varphi\Vert_{X^{\frac{1-\alpha_{1}}{2}}}^2 + \eta\gamma_2\frac{1}{2\epsilon_4}\Vert\varphi_t\Vert_{X^{-\frac{\alpha_{1}}{2}}}^2
\]
and
\[
a_{\epsilon}(t)\gamma_2\langle\varphi,A^{\frac{1}{2}}\phi_t\rangle_{X^{-\frac{\alpha_{1}}{2}}} \leq a_1\gamma_2\frac{\epsilon_5}{2}\Vert\varphi\Vert_{X^{\frac{1-\alpha_{1}}{2}}}^2 + a_1\gamma_2\frac{1}{2\epsilon_5}\Vert\phi_t\Vert_{X^{-\frac{\alpha_{1}}{2}}}^2,
\]
for all $\epsilon_2 > 0$, $\epsilon_3 > 0$, $\epsilon_4 > 0$ and $\epsilon_5 > 0$.

Since $\frac{1}{2} \leq \alpha_{1} < 1$, we have the embedding $X \hookrightarrow X^{\frac{1-2\alpha_{1}}{4}}$, that is,
$$
\Vert\cdot\Vert_{X^{\frac{1-2\alpha_{1}}{4}}} \leq \tilde{c} \Vert\cdot\Vert_X
$$
for some constant $\tilde{c} > 0$. From this, and by condition \eqref{derivative-a-bounded}, and also using the fact that $\varphi$ remains in a bounded subset of $X$, we get
\begin{equation*}\label{estimate8}
\begin{split}
&- a_{\epsilon}^{\prime}(t)\langle A^{\frac{1}{2}}\varphi,\phi_t\rangle_{X^{-\frac{\alpha_{1}}{2}}} \leq b_0\Vert\varphi\Vert_{X^{\frac{1-2\alpha_{1}}{4}}}\Vert\phi_t\Vert_{X^{\frac{1-2\alpha_{1}}{4}}} \leq b_0\frac{1}{2\epsilon_6}\Vert\varphi\Vert_{X^{\frac{1-2\alpha_{1}}{4}}}^2 + b_0\frac{\epsilon_6}{2}\Vert\phi_t\Vert_{X^{\frac{1-2\alpha_{1}}{4}}}^2 \\
\leq{}& b_0\frac{1}{2\epsilon_6}\tilde{c}^2\Vert\varphi\Vert_X^2 + b_0\frac{\epsilon_6}{2}\Vert\phi_t\Vert_{X^{\frac{1-2\alpha_{1}}{4}}}^2  \leq \frac{1}{2\epsilon_6}b_0c_7 + b_0\frac{\epsilon_6}{2}\Vert\phi_t\Vert_{X^{\frac{1-2\alpha_{1}}{4}}}^2
\end{split}
\end{equation*}
for all $\epsilon_6 > 0$,
\begin{equation*}\label{estimate9}
\begin{split}
&a_{\epsilon}^{\prime}(t)\langle A^{\frac{1}{2}}\phi,\varphi_t\rangle_{X^{-\frac{\alpha_{1}}{2}}} \leq b_0\frac{1}{2\epsilon_7}\Vert\phi\Vert_{X^{\frac{1-2\alpha_{1}}{4}}}^2 + b_0\frac{\epsilon_7}{2}\Vert\varphi_t\Vert_{X^{\frac{1-2\alpha_{1}}{4}}}^2 \\
\leq{}& b_0\frac{1}{2\epsilon_7}\tilde{c}^2\Vert\phi\Vert_X^2 + b_0\frac{\epsilon_7}{2}\Vert\varphi_t\Vert_{X^{\frac{1-2\alpha_{1}}{4}}}^2 \leq \frac{1}{2\epsilon_7}b_0c_8 + b_0\frac{\epsilon_7}{2}\Vert\varphi_t\Vert_{X^{\frac{1-2\alpha_{1}}{4}}}^2
\end{split}
\end{equation*}
for all $\epsilon_7 > 0$,
\begin{equation*}\label{estimate10}
\begin{split}
&- a_{\epsilon}^{\prime}(t)\gamma_1\langle\phi,A^{\frac{1}{2}}\varphi\rangle_{X^{-\frac{\alpha_{1}}{2}}} \leq b_0\gamma_1\frac{\epsilon_8}{2}\Vert\phi\Vert_{X^{\frac{1-2\alpha_{1}}{4}}}^2 + b_0\gamma_1\frac{1}{2\epsilon_8}\Vert\varphi\Vert_{X^{\frac{1-2\alpha_{1}}{4}}}^2 \\
\leq{}& \frac{b_0\gamma_1\epsilon_8\tilde{c}^2}{2}\Vert\phi\Vert_X^2 + \frac{b_0\gamma_1\tilde{c}^2}{2\epsilon_8}\Vert\varphi\Vert_X^2  \leq c_9
\end{split}
\end{equation*}

\noindent and
\begin{equation*}\label{estimate11}
\begin{split}
&a_{\epsilon}^{\prime}(t)\gamma_2\langle\varphi,A^{\frac{1}{2}}\phi\rangle_{X^{-\frac{\alpha_{1}}{2}}}  \leq b_0\gamma_2\frac{\epsilon_9}{2}\Vert\varphi\Vert_{X^{\frac{1-2\alpha_{1}}{4}}}^2 + b_0\gamma_2\frac{1}{2\epsilon_9}\Vert\phi\Vert_{X^{\frac{1-2\alpha_{1}}{4}}}^2 \\
\leq{}& \frac{b_0\gamma_2\epsilon_9\tilde{c}^2}{2}\Vert\varphi\Vert_X^2 + \frac{b_0\gamma_2\tilde{c}^2}{2\epsilon_9}\Vert\phi\Vert_X^2  \leq c_{10}
\end{split}
\end{equation*}
for some constants $c_9 > 0$ and $c_{10} > 0$.

Finally, combining all the estimates obtained before, we get
\begin{equation*}\label{estimate12}
\begin{split}
\frac{d}{dt}\mathcal{L}_{\alpha_{1}} (t) & \leq - \left( \gamma_1 - \eta\gamma_1\frac{\epsilon_2}{2} - a_1\gamma_1\frac{\epsilon_3}{2} \right) \Vert\phi\Vert_{X^{\frac{1-\alpha_{1}}{2}}}^2 \\
&- \left( - \gamma_1 - \eta\gamma_1\frac{1}{2\epsilon_2} - a_1\gamma_2\frac{1}{2\epsilon_5} - \frac{\epsilon_0}{2} \right) \Vert\phi_t\Vert_{X^{-\frac{\alpha_{1}}{2}}}^2 \\
&- \left( \gamma_2 - \eta\gamma_2\frac{\epsilon_4}{2} - a_1\gamma_2\frac{\epsilon_5}{2} \right) \Vert\varphi\Vert_{X^{\frac{1-\alpha_{1}}{2}}}^2 \\
&- \left( - \gamma_2 - a_1\gamma_1\frac{1}{2\epsilon_3} - \eta\gamma_2\frac{1}{2\epsilon_4} \right) \Vert\varphi_t\Vert_{X^{-\frac{\alpha_{1}}{2}}}^2 \\
&- \left( \gamma_1 - \frac{\epsilon_1}{2} \right) \Vert\phi\Vert_{X^{-\frac{\alpha_{1}}{2}}}^2 + \left( b_0\frac{\epsilon_6}{2} - \eta \right) \Vert\phi_t\Vert_{X^{\frac{1-2\alpha_{1}}{4}}}^2 + \left( b_0\frac{\epsilon_7}{2} - \eta \right) \Vert\varphi_t\Vert_{X^{\frac{1-2\alpha_{1}}{4}}}^2 \\
&+ \frac{1}{2\epsilon_0}c_6^2 + \frac{1}{2\epsilon_1}\gamma_1^2c_6^2 + \frac{1}{2\epsilon_6}b_0c_7 + \frac{1}{2\epsilon_7}b_0c_8 + c_9 + c_{10}.
\end{split}
\end{equation*}

Now, by choosing $\epsilon_1 > 0$, $\epsilon_2 > 0$, $\epsilon_3 > 0$, $\epsilon_4 > 0$, $\epsilon_5 > 0$, $\epsilon_6 > 0$ and $\epsilon_7 > 0$, respectively, such that
$$
\epsilon_1 = 2\gamma_1, \ \epsilon_2 = \frac{1}{2\eta}, \ \epsilon_3 = \frac{1}{2a_1}, \ \epsilon_4 = \frac{1}{2\eta}, \ \epsilon_5 = \frac{1}{2a_1}, \ \epsilon_6 = \frac{\eta}{b_0} \ \text{and} \ \epsilon_7 = \frac{3\eta}{2b_0},
$$
we obtain
\begin{equation}\label{estimate13}
\begin{split}
\frac{d}{dt}\mathcal{L}_{\alpha_{1}} (t) & \leq -\frac{1}{2}\gamma_1\Vert\phi\Vert_{X^{\frac{1-\alpha_{1}}{2}}}^2 - \left( - \gamma_1 - \eta^2\gamma_1 - a_1^2\gamma_2 - \frac{\epsilon_0}{2} \right) \Vert\phi_t\Vert_{X^{-\frac{\alpha_{1}}{2}}}^2 - \frac{1}{2}\gamma_2\Vert\varphi\Vert_{X^{\frac{1-\alpha_{1}}{2}}}^2 \\
&- \left( - \gamma_2 - a_1^2\gamma_1 - \eta^2\gamma_2 \right) \Vert\varphi_t\Vert_{X^{-\frac{\alpha_{1}}{2}}}^2 - \frac{\eta}{2}\Vert\phi_t\Vert_{X^{\frac{1-2\alpha_{1}}{4}}}^2 - \frac{\eta}{4}\Vert\varphi_t\Vert_{X^{\frac{1-2\alpha_{1}}{4}}}^2 \\
& + \frac{1}{2\epsilon_0}c_6^2 + \frac{1}{4}\gamma_1c_6^2 + \frac{b_0^2 c_7 }{2\eta} + \frac{b_0^2 c_8 }{3\eta} +  c_9 + c_{10}.
\end{split}
\end{equation}

As $\frac{1-2\alpha_{1}}{4} = -\frac{\alpha_{1}}{2} +\frac{1}{4} > -\frac{\alpha_{1}}{2}$,  we have the embedding $X^{\frac{1-2\alpha_{1}}{4}} \hookrightarrow X^{-\frac{\alpha_{1}}{2}},$ and so
$$
\Vert\cdot\Vert_{X^{-\frac{\alpha_{1}}{2}}} \leq \tilde{\tilde{c}}\Vert\cdot\Vert_{X^{\frac{1-2\alpha_{1}}{4}}}
$$
for some constant $\tilde{\tilde{c}} > 0$, which implies
\begin{equation}\label{estimate14}
- \Vert\cdot\Vert_{X^{\frac{1-2\alpha_{1}}{4}}}^2 \leq - \frac{1}{\tilde{\tilde{c}}^{2}} \Vert\cdot\Vert_{X^{-\frac{\alpha_{1}}{2}}}^2.
\end{equation}

Hence, combining $(\ref{estimate13})$ and $(\ref{estimate14})$, we get
\begin{equation}\label{estimate15}
\begin{split}
\frac{d}{dt}\mathcal{L}_{\alpha_{1}} (t) & \leq -\frac{1}{2}\gamma_1\Vert\phi\Vert_{X^{\frac{1-\alpha_{1}}{2}}}^2 - \left( \frac{\eta}{2\tilde{\tilde{c}}^{2}} - \gamma_1 - \eta^2\gamma_1 - a_1^2\gamma_2 - \frac{\epsilon_0}{2} \right) \Vert\phi_t\Vert_{X^{-\frac{\alpha_{1}}{2}}}^2 \\
&- \frac{1}{2}\gamma_2\Vert\varphi\Vert_{X^{\frac{1-\alpha_{1}}{2}}}^2 - \left( \frac{\eta}{4\tilde{\tilde{c}}^{2}} - \gamma_2 - a_1^2\gamma_1 - \eta^2\gamma_2 \right) \Vert\varphi_t\Vert_{X^{-\frac{\alpha_{1}}{2}}}^2 \\
&+ \frac{1}{2\epsilon_0}c_6^2 +\frac{1}{4}\gamma_1c_6^2 + \frac{b_0^2 c_7 }{2\eta} + \frac{b_0^2 c_8 }{3\eta} +  c_9 + c_{10}.
\end{split}
\end{equation}

At last, choosing $\epsilon_0 > 0$ such that $\epsilon_0 = \frac{\eta}{2\tilde{\tilde{c}}^{2}}$, expression $(\ref{estimate15})$ turns into
\begin{equation*}\label{estimate16}
\begin{split}
\frac{d}{dt}\mathcal{L}_{\alpha_{1}} (t) & \leq -\frac{1}{2}\gamma_1\Vert\phi\Vert_{X^{\frac{1-\alpha_{1}}{2}}}^2 - \left( \frac{\eta}{4\tilde{\tilde{c}}^{2}} - (1 + \eta^2)\gamma_1 - a_1^2\gamma_2 \right) \Vert\phi_t\Vert_{X^{-\frac{\alpha_{1}}{2}}}^2 \\
&- \frac{1}{2}\gamma_2\Vert\varphi\Vert_{X^{\frac{1-\alpha_{1}}{2}}}^2 - \left( \frac{\eta}{4\tilde{\tilde{c}}^{2}} - a_1^2\gamma_1 - (1 + \eta^2)\gamma_2 \right) \Vert\varphi_t\Vert_{X^{-\frac{\alpha_{1}}{2}}}^2 \\
&+ \frac{\tilde{\tilde{c}}^{2}}{\eta}c_6^2 + \frac{1}{4}\gamma_1c_6^2 + \frac{b_0^2 c_7 }{2\eta} + \frac{b_0^2 c_8 }{3\eta} +  c_9 + c_{10}.
\end{split}
\end{equation*}

Now, taking $\gamma_i > 0, i = 1, 2,$ sufficiently small such that
$$
\gamma_i < \min \left\{ \frac{1}{2\tilde{k}}, \frac{\eta}{16\tilde{\tilde{c}}^{2}} \frac{1}{1 + \eta^2}, \frac{\eta}{16\tilde{\tilde{c}}^{2}} \frac{1}{a_1^2} \right\}, \ i = 1, 2,
$$
where $\tilde{k} > 0$ is the embedding constant of $X^{\frac{1-\alpha_{1}}{2}} \hookrightarrow X^{-\frac{\alpha_{1}}{2}}$ and taking
$$M_1 = \min \left\{ \frac{1}{2}\gamma_1, \ \frac{\eta}{4\tilde{\tilde{c}}^{2}} - (1 + \eta^2)\gamma_1 - a_1^2\gamma_2, \ \frac{1}{2}\gamma_2, \ \frac{\eta}{4\tilde{\tilde{c}}^{2}} - a_1^2\gamma_1 - (1 + \eta^2)\gamma_2 \right\} > 0$$
and
$M_2 = \frac{\tilde{\tilde{c}}^{2}}{\eta}c_6^2 + \frac{1}{4}\gamma_1c_6^2 + \frac{b_0^2 c_7 }{2\eta} + \frac{b_0^2 c_8 }{3\eta} +  c_9 + c_{10} > 0,$
it follows that
\begin{equation}\label{estimate17}
\frac{d}{dt}\mathcal{L}_{\alpha_{1}}(t) \leq -M_1 \left( \Vert\phi\Vert_{X^{\frac{1-\alpha_{1}}{2}}}^2 + \Vert\phi_t\Vert_{X^{-\frac{\alpha_{1}}{2}}}^2 + \Vert\varphi\Vert_{X^{\frac{1-\alpha_{1}}{2}}}^2 + \Vert\varphi_t\Vert_{X^{-\frac{\alpha_{1}}{2}}}^2 \right) + M_2.
\end{equation}

Observe that
\[
\begin{split}
| \gamma_1\langle \phi, \phi_t \rangle_{X^{-\frac{\alpha_{1}}{2}}} + \gamma_2\langle \varphi, \varphi_t \rangle_{X^{-\frac{\alpha_{1}}{2}}}| \leq  \frac{1}{4}\left( \Vert\phi\Vert_{X^{\frac{1-\alpha_{1}}{2}}}^2 + \Vert\phi_t\Vert_{X^{-\frac{\alpha_{1}}{2}}}^2 + \Vert\varphi\Vert_{X^{\frac{1-\alpha_{1}}{2}}}^2 + \Vert\varphi_t\Vert_{X^{-\frac{\alpha_{1}}{2}}}^2 \right).
\end{split}
\]

In this way, using a similar argument as in \eqref{Eqneeded} and \eqref{Eqneeded1}, we get
\[
\begin{split}
&\frac{1}{4} \left( \Vert\phi\Vert_{X^{\frac{1-\alpha_{1}}{2}}}^2 + \Vert\phi_t\Vert_{X^{-\frac{\alpha_{1}}{2}}}^2 + \Vert\varphi\Vert_{X^{\frac{1-\alpha_{1}}{2}}}^2 + \Vert\varphi_t\Vert_{X^{-\frac{\alpha_{1}}{2}}}^2 \right) \\
& \leq \mathcal{L}_{\alpha_{1}}(t) \leq \frac{3(1 + \tilde{k}^2)}{4} \left( \Vert\phi\Vert_{X^{\frac{1-\alpha_{1}}{2}}}^2 + \Vert\phi_t\Vert_{X^{-\frac{\alpha_{1}}{2}}}^2 + \Vert\varphi\Vert_{X^{\frac{1-\alpha_{1}}{2}}}^2 + \Vert\varphi_t\Vert_{X^{-\frac{\alpha_{1}}{2}}}^2 \right).
\end{split}
\]

This estimate together with \eqref{estimate17} implies that
\[
\Vert\phi\Vert_{X^{\frac{1-\alpha_{1}}{2}}}^2 + \Vert\phi_t\Vert_{X^{-\frac{\alpha_{1}}{2}}}^2 + \Vert\varphi\Vert_{X^{\frac{1-\alpha_{1}}{2}}}^2 + \Vert\varphi_t\Vert_{X^{-\frac{\alpha_{1}}{2}}}^2 \leq 4\mathcal{L}_{\alpha_{1}}(\tau)e^{-M_3(t -\tau)} + M_4, 
\]
with positive constants $M_3$ and $M_4$. This assures that $(\phi, \phi_t, \varphi, \varphi_t)$ is bounded in the space $X^{\frac{1-\alpha_1}{2}} \times X^{-\frac{\alpha_1}{2}} \times X^{\frac{1-\alpha_1}{2}}\times  X^{-\frac{\alpha_1}{2}}$. But we want to conclude that $\bigcup\limits_{t \in \mathbb{R}}\mathbb{A}(t)$ is bounded in $X^{\frac{2-\alpha_1}{2}} \times X^{\frac{1-\alpha_1}{2}} \times X^{\frac{2-\alpha_1}{2}} \times X^{\frac{1-\alpha_1}{2}}$. We already know that $u_t$ and $v_t$ are bounded in $X^{\frac{1-\alpha_1}{2}}$. Now, to show that $ u\in X^{\frac{2-\alpha_1}{2}}$ and it is bounded, it is enough to show that $\Vert A u\Vert_{X^{-\frac{\alpha_1}{2}}} \leq C_1$ for some constant $C_1 > 0$,
since
$$
\Vert A u\Vert_{X^{-\frac{\alpha_1}{2}}} = \Vert A^{\frac{2-\alpha_1}{2}} u\Vert_X = \Vert u\Vert_{X^{\frac{2-\alpha_1}{2}}}.
$$

Indeed, note that
\[
\begin{split}
&\Vert -A u\Vert_{X^{-\frac{\alpha_1}{2}}} - \Vert  u + \eta A^{\frac{1}{2}} u_t + a_{\epsilon}(t)A^{\frac{1}{2}} v_t - f( u) \Vert_{X^{-\frac{\alpha_1}{2}}} \\
& \leq \Vert -A u -  u - \eta A^{\frac{1}{2}} u_t - a_{\epsilon}(t)A^{\frac{1}{2}} v_t + f( u) \Vert_{X^{-\frac{\alpha_1}{2}}} \\
& = \Vert  u_{tt} \Vert_{X^{-\frac{\alpha_1}{2}}} = \Vert \phi_t \Vert_{X^{-\frac{\alpha_1}{2}}} \leq k_1,
\end{split}
\]
which yields
$$
\Vert A u\Vert_{X^{-\frac{\alpha_1}{2}}} \leq k_1 + \Vert u\Vert_{X^{-\frac{\alpha_1}{2}}} + \eta\Vert A^{\frac{1}{2}} u_t\Vert_{X^{-\frac{\alpha_1}{2}}} + a_1\Vert A^{\frac{1}{2}} v_t\Vert_{X^{-\frac{\alpha_1}{2}}} + \Vert f( u)\Vert_{X^{-\frac{\alpha_1}{2}}}.
$$

Thus, we need to obtain estimates for the terms that are on the right hand side of the above inequality. Using the embedding $L^{\frac{2n}{n+2\alpha_1}}(\Omega) \hookrightarrow X^{-\frac{\alpha_1}{2}}$ and Lemma \ref{Lem_Aux_Int}, condition $(i)$, we have
\[
\begin{split}
\Vert f( u) \Vert_{X^{-\frac{\alpha_1}{2}}}  &\leq c_1\Vert f( u) \Vert_{L^{\frac{2n}{n+2\alpha_1}}(\Omega)} \leq c_1\left( \int_{\Omega} [c(1 + | u|^{\rho})]^{\frac{2n}{n+2\alpha_1}} dx \right)^{\frac{n+2\alpha_1}{2n}} \\
&\leq c_2\left( |\Omega| + \int_{\Omega} | u|^{\frac{2n\rho}{n+2\alpha_1}} dx \right)^{\frac{n+2\alpha_1}{2n}}  \leq c_3\left( 1 + \Vert u\Vert_{L^{\frac{2n\rho}{(n-2)\rho + 2}}(\Omega)}^{\rho} \right).
\end{split}
\]

Since the embedding $H^1(\Omega) \hookrightarrow L^p(\Omega)$ holds, if and only if $p \leq \frac{2n}{n-2}$, and
$$
(n-2)\rho + 2 > (n-2)\rho \implies \frac{2n\rho}{(n-2)\rho + 2} < \frac{2n\rho}{(n-2)\rho} = \frac{2n}{n-2},
$$
it follows that
$$
H^1(\Omega) \hookrightarrow L^{\frac{2n\rho}{(n-2)\rho + 2}}(\Omega)
$$
and, therefore,
\[
\begin{split}
\Vert f( u) \Vert_{X^{-\frac{\alpha_1}{2}}} & \leq c_3\left( 1 + \Vert u\Vert_{L^{\frac{2n\rho}{(n-2)\rho + 2}}(\Omega)}^{\rho} \right) \leq c_5\left( 1 + \Vert u\Vert_{X^{\frac{1}{2}}}^{\rho} \right) \leq k_2.
\end{split}
\]

For the remaining terms, note that
$$
\Vert u\Vert_{X^{-\frac{\alpha_1}{2}}} \leq \tilde{c}\Vert u\Vert_{X^{\frac{1}{2}}} \leq k_3,
$$
since $X^{\frac{1}{2}} \hookrightarrow X^{-\frac{\alpha_1}{2}},$ and, moreover,
$$
\eta\Vert A^{\frac{1}{2}} u_t\Vert_{X^{-\frac{\alpha_1}{2}}} = \eta\Vert\phi\Vert_{X^{\frac{1-\alpha_1}{2}}} \leq k_4,
$$
and
$$
a_1\Vert A^{\frac{1}{2}} v_t\Vert_{X^{-\frac{\alpha_1}{2}}} = a_1\Vert\varphi\Vert_{X^{\frac{1-\alpha_1}{2}}} \leq k_5.
$$

Therefore, we conclude that
$$
\Vert A u\Vert_{X^{-\frac{\alpha_1}{2}}} \leq k_1 + k_2 + k_3 + k_4 + k_5 = C_1,
$$
as desired.

Now, to show that $ v\in X^{\frac{2-\alpha_1}{2}}$ and it is bounded, the idea is similar, because
\[
\begin{split}
&\Vert -A v\Vert_{X^{-\frac{\alpha_1}{2}}} - \Vert \eta A^{\frac{1}{2}} v_t - a_{\epsilon}(t)A^{\frac{1}{2}} u_t \Vert_{X^{-\frac{\alpha_1}{2}}} \\
&\leq \Vert -A v - \eta A^{\frac{1}{2}} v_t + a_{\epsilon}(t)A^{\frac{1}{2}} u_t \Vert_{X^{-\frac{\alpha_1}{2}}} \\
&= \Vert  v_{tt} \Vert_{X^{-\frac{\alpha_1}{2}}} = \Vert \varphi_t \Vert_{X^{-\frac{\alpha_1}{2}}} \leq k_6,
\end{split}
\]
which implies
\[
\begin{split}
\Vert v\Vert_{X^{\frac{2-\alpha_1}{2}}} & = \Vert A v\Vert_{X^{-\frac{\alpha_1}{2}}} \leq k_6 + \eta\Vert A^{\frac{1}{2}} v_t\Vert_{X^{-\frac{\alpha_1}{2}}} + a_1\Vert A^{\frac{1}{2}} u_t\Vert_{X^{-\frac{\alpha_1}{2}}} \\
& = k_6 + \eta\Vert\varphi\Vert_{X^{\frac{1-\alpha_1}{2}}} + a_1\Vert\phi\Vert_{X^{\frac{1-\alpha_1}{2}}}  \leq C_2,
\end{split}
\]
with $C_2 > 0$ being constant.

From the previous observations and from the fact that
$$
\mathbb{A}(t) = \{ \xi(t)\colon  \xi(t) \ \text{is a bounded global solution} \},
$$
we conclude that
\begin{equation}\label{resultado passo1}
\bigcup\limits_{t \in \mathbb{R}}\mathbb{A}(t) \ \text{is bounded in} \ X^{\frac{2-\alpha_1}{2}} \times X^{\frac{1-\alpha_1}{2}} \times X^{\frac{2-\alpha_1}{2}} \times X^{\frac{1-\alpha_1}{2}}.
\end{equation}

Now, we turn our attention once again to the term $\Vert f^{\prime}( u)\phi\Vert_{X^{-\frac{\alpha}{2}}}$ that appears in $(\ref{estimate0})$. Note that the embedding $X^{\frac{1-\alpha_1}{2}} = H^{1-\alpha_1}(\Omega) \hookrightarrow L^p(\Omega)$ holds, if and only if $p \leq \frac{2n}{n - 2(1 - \alpha_1)}$. Hence, using $(\ref{resultado passo1})$, the Hölder's inequality and the growth condition \eqref{Gcondition}, we have
\[
\begin{split}
\Vert f^{\prime}( u)\phi\Vert_{X^{-\frac{\alpha}{2}}}  & \leq c_1\Vert f^{\prime}( u)\phi\Vert_{L^{\frac{2n}{n+2\alpha}}(\Omega)}  \leq c_1\Vert u_t\Vert_{L^{\frac{2n}{n - 2(1 - \alpha_1)}}(\Omega)} \Vert f^{\prime}( u) \Vert_{L^{\frac{n}{1 - \alpha_1 + \alpha}}(\Omega)} \\
& \leq c_2\Vert u_t\Vert_{X^{\frac{1-\alpha_1}{2}}} \left( \int_{\Omega} [c(1 + | u|^{\rho - 1})]^{\frac{n}{1 - \alpha_1 + \alpha}} dx \right)^{\frac{1 - \alpha_1 + \alpha}{n}} \\
& \leq c_3\Vert u_t\Vert_{X^{\frac{1-\alpha_1}{2}}} \left( |\Omega| + \int_{\Omega} | u|^{\frac{(\rho - 1)n}{1 - \alpha_1 + \alpha}} dx \right)^{\frac{1 - \alpha_1 + \alpha}{n}} \\
& \leq c_4\Vert u_t\Vert_{X^{\frac{1-\alpha_1}{2}}} \left( |\Omega|^{\frac{1 - \alpha_1 + \alpha}{n}} + \Vert u\Vert_{L^{\frac{(\rho - 1)n}{1 - \alpha_1 + \alpha}}(\Omega)}^{\rho - 1} \right) \\
& \leq c_5\Vert u_t\Vert_{X^{\frac{1-\alpha_1}{2}}} \left( 1 + \Vert u\Vert_{L^{\frac{(\rho - 1)n}{1 - \alpha_1 + \alpha}}(\Omega)}^{\rho - 1} \right).
\end{split}
\]

Now, note that the embedding $X^{\frac{2-\alpha_1}{2}} = H^{2 - \alpha_1}(\Omega) \hookrightarrow L^{\frac{(\rho - 1)n}{1 - \alpha_1 + \alpha}}(\Omega)$ holds, if and only if $(2-\alpha_1) - \frac{n}{2} \geq -\frac{1 - \alpha_1 + \alpha}{(\rho - 1)}$ and $\frac{(\rho-1)n}{1 - \alpha_1 + \alpha} \geq 2$, that is
\[
\frac{(\rho-1)(n-2)}{2} +\rho(\alpha_1-1) \leq \alpha \leq   \frac{(\rho-1)n}{2}  + \alpha_1 - 1.
\]
If $ \frac{(\rho-1)(n-2)}{2} +\rho(\alpha_1-1) =  \alpha_1 + \rho(\alpha_1 - 1) \geq 0$, then using \eqref{resultado passo1} and restarting the whole process from $(\ref{estimate0})$ with $\alpha_2 = \alpha_1 + \rho(\alpha_1 - 1)$, we will get
\begin{equation*}
\bigcup\limits_{t \in \mathbb{R}}\mathbb{A}(t) \ \text{is bounded in} \ X^{\frac{2-\alpha_2}{2}} \times X^{\frac{1-\alpha_2}{2}} \times X^{\frac{2-\alpha_2}{2}} \times X^{\frac{1-\alpha_2}{2}}.
\end{equation*}

We continue with this iterative process getting  $\alpha_{k+1} = \alpha_{1} + \rho(\alpha_{k} -1)$ for $k \geq 1$ while $\alpha_k \geq 0$.

There will be an integer $k_0 \geq 1$ such that $\alpha_{k_0} \geq 0$ and $\alpha_{k_0+1} < 0$. Thus, we obtain
\begin{equation*}
\bigcup\limits_{t \in \mathbb{R}}\mathbb{A}(t) \ \text{is bounded in} \ X^{\frac{2-\alpha_{k_0}}{2}} \times X^{\frac{1-\alpha_{k_0}}{2}} \times X^{\frac{2-\alpha_{k_0}}{2}} \times X^{\frac{1-\alpha_{k_0}}{2}},
\end{equation*}
but we cannot assure the boundedness in $X^{\frac{2-\alpha_{k_0+1}}{2}} \times X^{\frac{1-\alpha_{k_0+1}}{2}} \times X^{\frac{2-\alpha_{k_0+1}}{2}} \times X^{\frac{1-\alpha_{k_0+1}}{2}}$ because of the embeddings. Here, we set $\alpha = 0$ and we restart the whole process from $(\ref{estimate0})$, with the obvious adaptations
using the boundedness in $X^{\frac{2-\alpha_{k_0}}{2}} \times X^{\frac{1-\alpha_{k_0}}{2}} \times X^{\frac{2-\alpha_{k_0}}{2}} \times X^{\frac{1-\alpha_{k_0}}{2}}$, and we conclude that
\begin{equation*}
\bigcup\limits_{t \in \mathbb{R}}\mathbb{A}(t) \ \text{is bounded in} \ X^{1} \times X^{\frac{1}{2}} \times X^{1} \times X^{\frac{1}{2}},
\end{equation*}
and the proof is complete. \qed

\section{Upper semicontinuity of pullback attractors}\label{UpSem}

This last section is devoted to study the upper semicontinuity of pullback attractors with respect to the functional parameter $a_{\epsilon}$. To this end, we will use the regularity result obtained in the previous section. Let $\{a_{\epsilon}: \epsilon\in [0, 1]\}$ be a family of real valued functions of one real variable satisfying $(\ref{function a is bounded})$. For each $\epsilon\in [0, 1]$ denote by $\{ S_{(\epsilon)}(t, \tau): t\geq\tau \in \mathbb{R} \}$ and $\{ \mathbb{A}_{(\epsilon)}(t): t \in \mathbb{R} \}$, respectively, the evolution process and its pullback attractor associated with the problem \eqref{edp01}-\eqref{cond01}.

Moreover, we will assume that $\Vert a_{\epsilon} - a_0 \Vert_{L^{\infty}(\mathbb{R})} \rightarrow 0$ as $\epsilon \rightarrow 0^{+}$.

\medskip

\noindent {\bf Proof of Theorem \ref{T-UpSemi}:} 
Let $W = W^{(\epsilon)} - W^{(0)}$, where
\[
W^{(\epsilon)} = ( u^{(\epsilon)}, u_t^{(\epsilon)}, v^{(\epsilon)}, v_t^{(\epsilon)} ) \ \ \text{and} \ \ W^{(0)} = ( u^{(0)}, u_t^{(0)}, v^{(0)}, v_t^{(0)} ),
\]
with $u = u^{(\epsilon)} - u^{(0)}$ and $v = v^{(\epsilon)} - v^{(0)}$. From this, we have 
\[
\begin{cases}
u_{tt} - \Delta u + u + \eta(-\Delta)^{\frac{1}{2}}u_t + a_{\epsilon}(t)(-\Delta)^{\frac{1}{2}} v_t^{(\epsilon)} - a_0(t)(-\Delta)^{\frac{1}{2}} v_t^{(0)} = f(u^{(\epsilon)}) - f(u^{(0)}), \\
v_{tt} - \Delta v + \eta(-\Delta)^{\frac{1}{2}}v_t - a_{\epsilon}(t)(-\Delta)^{\frac{1}{2}} u_t^{(\epsilon)} + a_0(t)(-\Delta)^{\frac{1}{2}} u_t^{(0)} = 0,
\end{cases}
\]
for all $t > \tau$ and $x\in\Omega$. Taking the inner product of the first equation with $u_t$, and also the inner product of the second equation with $v_t$, we get
\[
\begin{split}
&\frac{1}{2}\frac{d}{dt}\int_{\Omega}|u_t|^2dx + \frac{1}{2}\frac{d}{dt}\int_{\Omega}|\nabla u|^2dx + \frac{1}{2}\frac{d}{dt}\int_{\Omega}|u|^2dx + \eta\Vert (-\Delta)^{\frac{1}{4}}u_t\Vert_X^2 \\
&+ a_{\epsilon}(t)\langle (-\Delta)^{\frac{1}{2}} v_t^{(\epsilon)}, u_t^{(\epsilon)} \rangle_X - a_{\epsilon}(t) \langle (-\Delta)^{\frac{1}{2}} v_t^{(\epsilon)}, u_t^{(0)} \rangle_X \\
& - a_0(t)\langle (-\Delta)^{\frac{1}{2}} v_t^{(0)}, u_t^{(\epsilon)} \rangle_X + a_0(t)\langle (-\Delta)^{\frac{1}{2}} v_t^{(0)}, u_t^{(0)} \rangle_X \\
& = \int_{\Omega} [ f(u^{(\epsilon)}) - f(u^{(0)}) ] u_t dx,
\end{split}
\]
and
\[
\begin{split}
&\frac{1}{2}\frac{d}{dt}\int_{\Omega}|v_t|^2dx + \frac{1}{2}\frac{d}{dt}\int_{\Omega}|\nabla v|^2dx + \eta\Vert (-\Delta)^{\frac{1}{4}}v_t\Vert_X^2 - a_{\epsilon}(t)\langle (-\Delta)^{\frac{1}{2}} u_t^{(\epsilon)}, v_t^{(\epsilon)} \rangle_X \\
& + a_{\epsilon}(t)\langle (-\Delta)^{\frac{1}{2}} u_t^{(\epsilon)}, v_t^{(0)} \rangle_X + a_0(t)\langle (-\Delta)^{\frac{1}{2}} u_t^{(0)}, v_t^{(\epsilon)} \rangle_X - a_0(t)\langle (-\Delta)^{\frac{1}{2}} u_t^{(0)}, v_t^{(0)} \rangle_X = 0,
\end{split}
\]
and combining these two last equations, it follows that
\[
\begin{split}
&\frac{d}{dt} \frac{1}{2} \left( \int_{\Omega}|\nabla u|^2dx + \int_{\Omega}|u|^2dx + \int_{\Omega}|u_t|^2dx + \int_{\Omega}|\nabla v|^2dx + \int_{\Omega}|v_t|^2dx \right) \\
&+ \eta\Vert (-\Delta)^{\frac{1}{4}}u_t\Vert_X^2 + \eta\Vert (-\Delta)^{\frac{1}{4}}v_t\Vert_X^2 + (a_0 - a_{\epsilon})(t)\langle (-\Delta)^{\frac{1}{4}} v_t^{(\epsilon)}, (-\Delta)^{\frac{1}{4}} u_t^{(0)} \rangle_X \\
&+ (a_{\epsilon} - a_0)(t)\langle (-\Delta)^{\frac{1}{4}} u_t^{(\epsilon)}, (-\Delta)^{\frac{1}{4}} v_t^{(0)} \rangle_X \\
&=  \int_{\Omega} [ f(u^{(\epsilon)}) - f(u^{(0)}) ] u_t dx.
\end{split}
\]

Now, using the Young's inequality, we have
\begin{equation}\label{estimativa semicontinuidade 1}
\begin{split}
&\frac{d}{dt} \left( \Vert u\Vert_{X^{\frac{1}{2}}}^2 + \Vert u\Vert_X^2 + \Vert u_t\Vert_X^2 + \Vert v\Vert_{X^{\frac{1}{2}}}^2 + \Vert v_t\Vert_X^2 \right) \\
&= - 2\eta\Vert A^{\frac{1}{4}}u_t\Vert_X^2 - 2\eta\Vert A^{\frac{1}{4}}v_t\Vert_X^2 + 2(a_{\epsilon} - a_0)(t)\langle A^{\frac{1}{4}} v_t^{(\epsilon)}, A^{\frac{1}{4}} u_t^{(0)} \rangle_X \\
&+ 2(a_0 - a_{\epsilon})(t)\langle A^{\frac{1}{4}} u_t^{(\epsilon)}, A^{\frac{1}{4}} v_t^{(0)} \rangle_X + 2\int_{\Omega} [ f(u^{(\epsilon)}) - f(u^{(0)}) ] u_t dx \\
&\leq  2|(a_{\epsilon} - a_0)(t)| \left( \frac{1}{2}\Vert v_t^{(\epsilon)} \Vert_{X^{\frac{1}{4}}}^2 + \frac{1}{2}\Vert u_t^{(0)} \Vert_{X^{\frac{1}{4}}}^2 \right) \\
&+ 2|(a_{\epsilon} - a_0)(t)| \left( \frac{1}{2}\Vert u_t^{(\epsilon)} \Vert_{X^{\frac{1}{4}}}^2 + \frac{1}{2}\Vert v_t^{(0)} \Vert_{X^{\frac{1}{4}}}^2 \right) \\
&+ 2\int_{\Omega} |[ f(u^{(\epsilon)}) - f(u^{(0)}) ] u_t| dx \\
&\leq \Vert a_{\epsilon} - a_0 \Vert_{L^{\infty}(\mathbb{R})} \left( \Vert u_t^{(\epsilon)} \Vert_{X^{\frac{1}{4}}}^2 + \Vert u_t^{(0)} \Vert_{X^{\frac{1}{4}}}^2 + \Vert v_t^{(\epsilon)} \Vert_{X^{\frac{1}{4}}}^2 + \Vert v_t^{(0)} \Vert_{X^{\frac{1}{4}}}^2 \right) \\
&+ 2\int_{\Omega} | [ f(u^{(\epsilon)}) - f(u^{(0)}) ] u_t| dx.
\end{split}
\end{equation}

On the other hand, from Theorem \ref{RegPA}, we know that $W^{(\epsilon)}$ and $W^{(0)}$ are bounded in $X^1 \times X^{\frac{1}{2}} \times X^1 \times X^{\frac{1}{2}}$. In particular, there exists a constant $C>0$, independent of $\epsilon$, such that
\begin{equation}\label{estimativa semicontinuidade 2}
\Vert u_t^{(\epsilon)} \Vert_{X^{\frac{1}{2}}}, \Vert u_t^{(0)} \Vert_{X^{\frac{1}{2}}}, \Vert v_t^{(\epsilon)} \Vert_{X^{\frac{1}{2}}}, \Vert v_t^{(0)} \Vert_{X^{\frac{1}{2}}} \leq C.
\end{equation}

Therefore, from $(\ref{estimativa semicontinuidade 1})$, $(\ref{estimativa semicontinuidade 2})$, and the embedding $X^{\frac{1}{2}} \hookrightarrow X^{\frac{1}{4}}$, we obtain
\begin{equation}\label{estimativa semicontinuidade 3}
\begin{split}
&\frac{d}{dt} \left( \Vert u\Vert_{X^{\frac{1}{2}}}^2 + \Vert u\Vert_X^2 + \Vert u_t\Vert_X^2 + \Vert v\Vert_{X^{\frac{1}{2}}}^2 + \Vert v_t\Vert_X^2 \right) \\
&\leq C^{\prime}\Vert a_{\epsilon} - a_0 \Vert_{L^{\infty}(\mathbb{R})} + 2\int_{\Omega} |[ f(u^{(\epsilon)}) - f(u^{(0)}) ] u_t| dx,
\end{split}
\end{equation}
where $C^{\prime} > 0$ is independent of $\epsilon$.

By the Mean Value Theorem, there exists $\sigma\in (0, 1)$ such that
$$
| f(u^{(\epsilon)}) - f(u^{(0)}) | = |f^{\prime}( \sigma u^{(\epsilon)} + (1 - \sigma)u^{(0)} )| |u^{(\epsilon)} - u^{(0)}| = |f^{\prime}( \sigma u^{(\epsilon)} + (1 - \sigma)u^{(0)} )| |u|,
$$
and so
$$
\int_{\Omega} |[ f(u^{(\epsilon)}) - f(u^{(0)}) ] u_t| dx = \int_{\Omega} |f^{\prime}( \sigma u^{(\epsilon)} + (1 - \sigma)u^{(0)} )| |u| |u_t| dx.
$$

As in the proof of Theorem \ref{RegPA}, the condition $1<\rho\leq\frac{n}{n-2}$ implies $X^{\frac{1}{2}} \hookrightarrow L^{2\rho}(\Omega)$. Since $\frac{(\rho - 1)}{2\rho} + \frac{1}{2\rho} + \frac{1}{2} = 1$, then Hölder's inequality gives us
\begin{equation}\label{estimativa semicontinuidade 4}
\int_{\Omega} |[ f(u^{(\epsilon)}) - f(u^{(0)}) ] u_t| dx \leq \Vert f^{\prime}( \sigma u^{(\epsilon)} + (1 - \sigma)u^{(0)} ) \Vert_{L^{\frac{2\rho}{\rho - 1}}(\Omega)} \Vert u\Vert_{L^{2\rho}(\Omega)} \Vert u_t\Vert_{L^{2}(\Omega)};
\end{equation}
but note that
\begin{equation}\label{estimativa semicontinuidade 5}
\begin{split}
&\Vert f^{\prime}( \sigma u^{(\epsilon)} + (1 - \sigma)u^{(0)} ) \Vert_{L^{\frac{2\rho}{\rho - 1}}(\Omega)} \leq \left( \int_{\Omega} [ c(1 + | \sigma u^{(\epsilon)} + (1 - \sigma)u^{(0)} |^{\rho - 1}) ]^{\frac{2\rho}{\rho - 1}} dx \right)^{\frac{\rho - 1}{2\rho}} \\
&\leq \tilde{c} \left( |\Omega| + \int_{\Omega} | \sigma u^{(\epsilon)} + (1 - \sigma)u^{(0)} |^{2\rho} dx \right)^{\frac{\rho - 1}{2\rho}} \\
&\leq \tilde{\tilde{c}} \left( |\Omega|^{\frac{\rho - 1}{2\rho}} + \Vert \sigma u^{(\epsilon)} + (1 - \sigma)u^{(0)} \Vert_{L^{2\rho}(\Omega)}^{\rho - 1} \right) \\
&\leq \tilde{\tilde{\tilde{c}}} \left[ 1 + \left( \Vert \sigma u^{(\epsilon)} \Vert_{L^{2\rho}(\Omega)} + \Vert (1 - \sigma)u^{(0)} \Vert_{L^{2\rho}(\Omega)} \right)^{\rho - 1} \right] \\
&\leq \tilde{\tilde{\tilde{\tilde{c}}}} \left( 1 + \Vert u^{(\epsilon)} \Vert_{X^{\frac{1}{2}}}^{\rho - 1} + \Vert u^{(0)} \Vert_{X^{\frac{1}{2}}}^{\rho - 1} \right)  \leq C_0,
\end{split}
\end{equation}
where $C_0 > 0$ is independent of $\epsilon$. Thus, combining $(\ref{estimativa semicontinuidade 4})$, $(\ref{estimativa semicontinuidade 5})$ and the Young's inequality, we obtain
\begin{equation}\label{estimativa semicontinuidade 6}
\begin{split}
&\int_{\Omega} |[ f(u^{(\epsilon)}) - f(u^{(0)}) ] u_t| dx \leq C_0\Vert u\Vert_{L^{2\rho}(\Omega)} \Vert u_t\Vert_{L^{2}(\Omega)} \leq \hat{c}\Vert u\Vert_{X^{\frac{1}{2}}} \Vert u_t\Vert_X \\
&\leq \frac{\hat{c}}{2} \left( \Vert u\Vert_{X^{\frac{1}{2}}}^2 + \Vert u_t\Vert_X^2 \right) \leq \frac{\hat{c}}{2} \left( \Vert u\Vert_{X^{\frac{1}{2}}}^2 + \Vert u\Vert_X^2 + \Vert u_t\Vert_X^2 + \Vert v\Vert_{X^{\frac{1}{2}}}^2 + \Vert v_t\Vert_X^2 \right).
\end{split}
\end{equation}

Now, denoting $G(t) = \Vert u(t)\Vert_{X^{\frac{1}{2}}}^2 + \Vert u(t)\Vert_X^2 + \Vert u_t(t)\Vert_X^2 + \Vert v(t)\Vert_{X^{\frac{1}{2}}}^2 + \Vert v_t(t)\Vert_X^2$, from $(\ref{estimativa semicontinuidade 3})$ and $(\ref{estimativa semicontinuidade 6})$, it follows that
\[
\frac{d}{dt} G(t)  \leq C^{\prime}\Vert a_{\epsilon} - a_0 \Vert_{L^{\infty}(\mathbb{R})} + \hat{c}G(t)  \leq \overline{C}\Vert a_{\epsilon} - a_0 \Vert_{L^{\infty}(\mathbb{R})} + \overline{C}G(t),
\]
where $\overline{C} = \max\{ C^{\prime}, \hat{c}\}$. Since this holds for all $t > \tau$, 
and noticing that $G(\tau) = 0$, we get
$$
G(t)e^{-\overline{C}(t - \tau)} \leq -\Vert a_{\epsilon} - a_0 \Vert_{L^{\infty}(\mathbb{R})}e^{-\overline{C}(t - \tau)} + \Vert a_{\epsilon} - a_0 \Vert_{L^{\infty}(\mathbb{R})}, \ t > \tau,
$$
that is,
$$
\Vert u\Vert_{X^{\frac{1}{2}}}^2 + \Vert u\Vert_X^2 + \Vert u_t\Vert_X^2 + \Vert v\Vert_{X^{\frac{1}{2}}}^2 + \Vert v_t\Vert_X^2 \leq e^{\overline{C}(t - \tau)} \Vert a_{\epsilon} - a_0 \Vert_{L^{\infty}(\mathbb{R})} \rightarrow 0
$$
as $\epsilon \rightarrow 0^{+}$ with $t, \tau$ in compact subsets of $\mathbb{R}$, and uniformly for $W_0$ in bounded subsets of $Y_0$. This proves the first part of the result.

In order to show the upper semicontinuity of the family of pullback attractors $\{ \mathbb{A}_{(\epsilon)}(t)\colon  t \in \mathbb{R} \}$ at $\epsilon = 0$, let $\delta > 0$ be given. Let $t \in \mathbb{R}$ be fixed but arbitrary and 
$$
B \supset \bigcup\limits_{s \leq t} \mathbb{A}_{(\epsilon)}(s)
$$
be a bounded set in $Y_0$, whose existence is guaranteed by Theorem \ref{existence of the pullback attractor}. Now, let $\tau \in\mathbb{R}$, $\tau < t$, be such that
$$
d_H (S_{(0)}(t, \tau)B, \mathbb{A}_{(0)}(t)) < \frac{\delta}{2}.
$$

Using the convergence obtained in the first part of this proof, there exists $\epsilon_0 > 0$ such that
$$
\sup\limits_{u_{\epsilon} \in \mathbb{A}_{(\epsilon)}(\tau)} \Vert S_{(\epsilon)}(t, \tau)u_{\epsilon} - S_{(0)}(t, \tau)u_{\epsilon} \Vert_{Y_0} < \frac{\delta}{2}
$$
for all $\epsilon < \epsilon_0$. Finally, for $\epsilon < \epsilon_0$, we have
\[
\begin{split}
&d_H (\mathbb{A}_{(\epsilon)}(t), \mathbb{A}_{(0)}(t))  \\
&\leq d_H ( S_{(\epsilon)}(t, \tau)\mathbb{A}_{(\epsilon)}(\tau), S_{(0)}(t, \tau)\mathbb{A}_{(\epsilon)}(\tau) ) + d_H ( S_{(0)}(t, \tau)\mathbb{A}_{(\epsilon)}(\tau), \mathbb{A}_{(0)}(t) ) \\
&= \sup\limits_{u_{\epsilon} \in \mathbb{A}_{(\epsilon)}(\tau)} \Vert S_{(\epsilon)}(t, \tau)u_{\epsilon} - S_{(0)}(t, \tau)u_{\epsilon} \Vert_{Y_0} + d_H ( S_{(0)}(t, \tau)\mathbb{A}_{(\epsilon)}(\tau), \mathbb{A}_{(0)}(t) ) \\
&< \frac{\delta}{2} + \frac{\delta}{2} = \delta,
\end{split}
\]
which proves the upper semicontinuity of the family of pullback attractors. \qed

\bibliographystyle{amsplain}
\addcontentsline{toc}{chapter}{Referências Bibliográficas}

\end{document}